\documentclass[12pt]{article}
\usepackage[top=1.5in, bottom=1.6in, left=1in, right=1in]{geometry}

\usepackage{amsmath,amssymb,amsthm,mathtools,amsfonts}
\usepackage{bm,bbm,soul}
\usepackage{longtable,array}
\usepackage{enumitem}
\usepackage{graphicx}
\usepackage{mathrsfs,xcolor,soul}
\usepackage[round,compress,sort]{natbib}
\usepackage{algorithm,algpseudocode}
\usepackage{booktabs}
\usepackage{placeins}
\usepackage{float}
\usepackage[hidelinks,hypertexnames=false]{hyperref}
\newcommand{\R}{\mathbb{R}}
\newcommand{\mc}{\mathcal}

\newcommand{\mr}{\mathrm}

\newcommand{\eps}{\varepsilon}
\newcommand{\X}{\mc{X}}

\newcommand{\D}{\mc{D}}
\newcommand{\C}{\mc{C}}

\newcommand{\id}{\mathrm{id}}

\theoremstyle{plain}
\newtheorem{theorem}{Theorem}[section]
\newtheorem{lemma}{Lemma}[section]
\newtheorem{proposition}{Proposition}[section]
\newtheorem{corollary}{Corollary}[section]

\theoremstyle{definition}
\newtheorem{definition}{Definition}[section]
\newtheorem{assumption}{Assumption}[section]
\newtheorem{example}{Example}[section]
\theoremstyle{remark}

\newcommand{\blind}{1}

\begin{document}

\def\spacingset#1{\renewcommand{\baselinestretch}%
{#1}\small\normalsize} \spacingset{1}


\if1\blind
{ 
  \title{\bf Canonical Layer Inference for Deep Gaussian Processes via Symmetry-Breaking Augmentation}
  \author{Yichen Zhu\thanks{Email: yczhu@hku.hk}\\
    Department of Statistics, the University of Hong Kong\\ 
    \hspace{.2cm}\\
    Hengrui Luo\thanks{Email: hrluo@rice.edu}\\
  Department of Statistics, Rice University \\
  and \\
  Computational Research Division, Lawrence Berkeley National Laboratory}
  \date{}
  \maketitle
}  \fi

\if0\blind
{
  \bigskip
  \bigskip
  \bigskip
  \begin{center}
    {\LARGE\bf Canonical Layer Inference for Deep Gaussian Processes via Symmetry-Breaking Augmentation}
\end{center}
  \medskip
} \fi

\bigskip
\vspace{-2em}
\begin{abstract}
Deep Gaussian processes (DGPs) are flexible priors for compositional regression functions, but inference for intermediate layers faces questions. The basic obstruction is an adjacent-layer symmetry: applying an invertible transformation and its inverse to two adjacent layers leaves the composed regression map unchanged. Under explicit symmetry and prior-comparability conditions, we show that the posterior may not concentrate on any prescribed intermediate-layer representation, even when the composed regression function is estimable. We then introduce an augmented $\rho$-fractional posterior that reweights layer decompositions by a non-saturating identity-deviation penalty. The augmentation changes posterior odds between symmetry-related decompositions by an exponential penalty ratio. Under proper choices of the penalty and a prior mass condition, it preserves function-level fractional posterior contraction, while shrinking the redundant layers towards the identity mapping. Thus, the likelihood does not identify a unique ``true'' set of layers; the augmentation favors identity-biased parsimonious decompositions within an equivalence class of layers; uniqueness of a selected representative is not established.
\end{abstract}

\noindent%
{\it Keywords: Gaussian processes, deep structures, identifiability, layer structures}   
\vfill

\newpage
\spacingset{1} 

\section{Introduction}\label{sec:intro}
Deep Gaussian processes (DGPs) model a regression function as a composition of Gaussian process (GP) layers.  They are attractive both empirically, for capturing nonstationary and compositional structures \citep{damianou2013deep,salimbeni2017doubly,luo2022sparse,zhu2024radial}, and theoretically, because suitably constructed DGP priors can attain minimax contraction rates (up to a poly-logarithm factor in sample size) for nonparametric regression of compositional smooth functions \citep{finocchio2023posterior,castillo2025deep}.  Existing guarantees, however, are mostly \emph{function-level}: they describe the learning of the composed regression map but not the learning of the individual layers, calling into the question: how much can we rely on the estimated intermediate layers when composition structures are of concern?

DGPs are often used as hierarchical representations in which one hopes to interpret intermediate layers, for example, to determine which layers are active, which layers are close to the identity, or which layers carry scientifically meaningful information in applications. Such questions are not addressed by the function-level contraction theory alone.
To see the distinction, define  a tuple of layers  $\theta=(f_1,\ldots,f_J)$ and the composition map by
\begin{equation}\label{eq:comp}
    \C(\theta)=f_J\circ\cdots\circ f_1.
\end{equation}
The regression likelihood depends on $\theta$ only through $\C(\theta)$.  Consequently, many layer tuples can induce the same regression function.  For example, if $S$ is an invertible map, then
\begin{align*}
    f_{j+1}\circ f_j
    =
    (f_{j+1}\circ S^{-1})\circ (S\circ f_j), \quad \forall 1\le j\le J-1.
\end{align*}
Thus an invertible transformation $S$ and its inverse $S^{-1}$ can be applied to two adjacent layers without changing the composed regression function.  This is the adjacent-layer symmetry example that motivates and drives our methodology. This phenomena generalizes the identifiability issue in latent factor models \citep{papastamoulis2022identifiability,gu2020partial} to functional spaces, and cannot be simply attributed to multi-modal posterior. 

The statistical consequence is that posterior contraction of $\C(\theta)$ does not imply posterior contraction of $\theta$.  A standard product DGP prior may assign the same or comparable, probability to many symmetry-related decompositions \citep{schmidt2003bayesian,dunlop2018howdeep}.  In such cases, the likelihood does not select a unique layer representation, and the posterior may fail to concentrate on any fixed representative when the prior remains sufficiently symmetric.  The posterior contraction target is not a single decomposition, but an equivalence class, or symmetry-related class, of decompositions that represent the same composed map.

The intended target is therefore not an unidentifiable ``true'' layer tuple determined by the likelihood alone.  Rather, the target is a \emph{canonical, identity-biased parsimonious representative} of an equivalence class, unless a complex composition has significant expressive gain. 
Building around this idea, our paper makes three contributions:
\begin{itemize}[leftmargin=*]
    \item \textbf{Layerwise non-identifiability.}  By introducing the {\it adjacent layer symmetry map}, we show  in Theorem~\ref{thm:non-conv} that  the posterior mass of DGP around any single prescribed layer tuple converges to zero, hence the intermediate layers are not identifiable as in Proposition~\ref{thm:non-id-general}.
    \item \textbf{Symmetry-breaking augmented posterior.}  We introduce augmented posteriors that encourage each non-final DGP layer towards identity mapping. Theorem~\ref{thm:noniden break} shows such augmentation resolves the identifiability issue caused by the adjacent layer symmetries.
    \item \textbf{Posterior contraction of augmented posterior.}   Under explicit prior mass conditions, Theorem~\ref{thm:contraction} preserves fractional-posterior contraction of the composed regression in R\'{e}nyi divergence.  Theorem~\ref{thm:parsimonious} gives the posterior concentration of redundant layers around the identity mapping.
\end{itemize}

\begin{figure}[!h]
\centering
\makebox[\textwidth][c]{%
\includegraphics[width=1.1\linewidth]{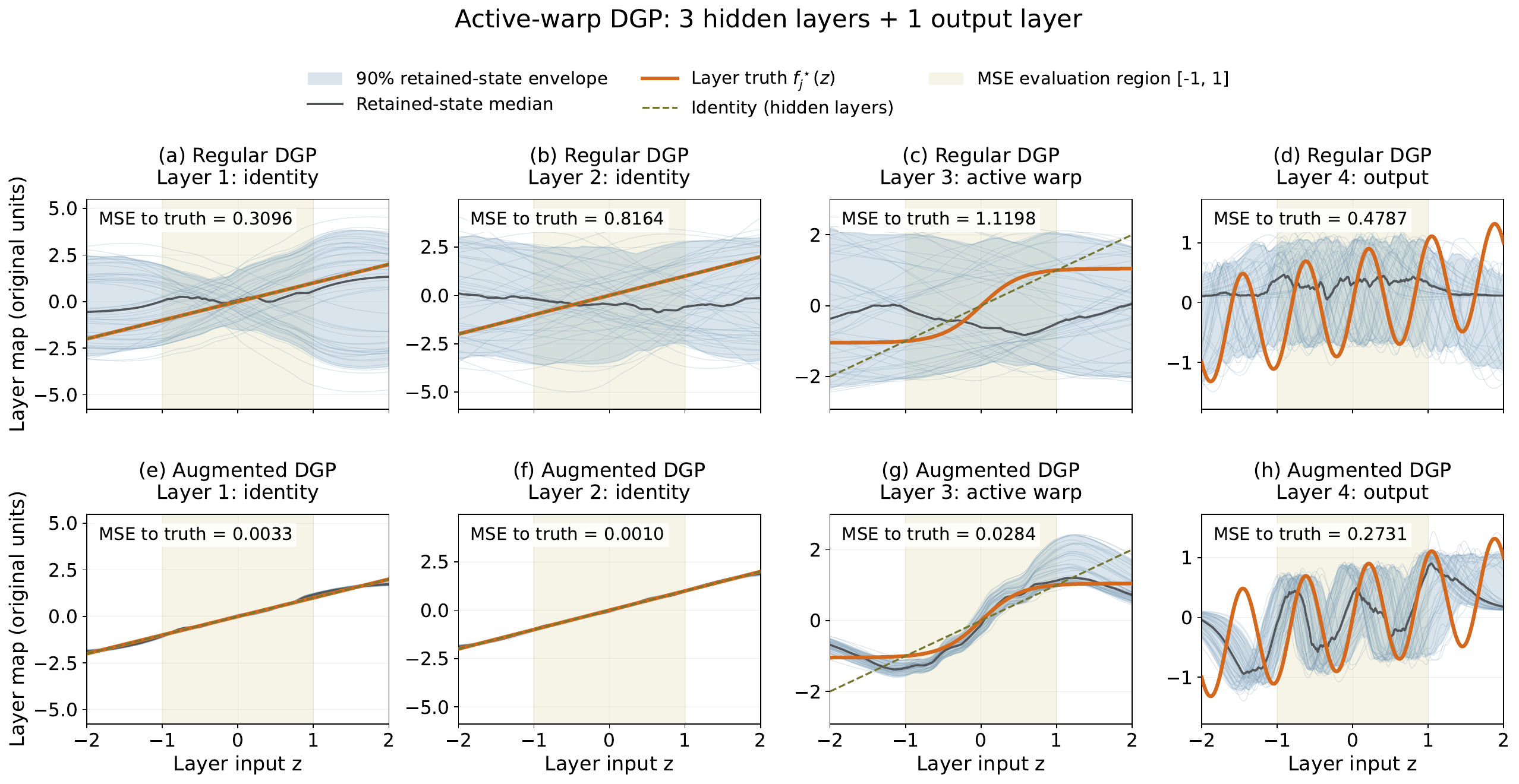}
}
\caption{Posterior anchoring in the four-layer
\texttt{active\_penultimate\_warp} design, with two redundant identity layers and two nonlinear components. The rows show the regular DGP and augmented DGP (proposed method in Section \ref{sec:asym-penalty}), 
both using identity initialization for the hidden layers. 
Annotated MSEs compare the median maps with their truths on the lightly shaded region $[-1,1]$, while penalties are evaluated on $\tilde\X=[-2,2]$. 
}
\label{fig:edge-transition-anchor}
\end{figure}

\paragraph{Related Literature.}
The closest related works fall into three strands. First, DGP theory has mainly studied the composed regression function: posterior contraction and adaptation for compositional smooth functions are developed   \citep{finocchio2023posterior,castillo2025deep}. These results do not imply recovery of the individual layers. Lower bounds for shallow GP priors establish advantages of depth for compositional classes \citep{giordano2022inability}. Compositional non-identifiability and uncertainty over DGP layers are explicitly studied by \citet{ustyuzhaninov2020compositional}; here we quantify non-concentration and study an explicit representation preference. Second, warping and deformation models use latent transformations to represent nonstationarity or non-Gaussian structure \citep{snelson2003warped,sampson1992nonparametric,schmidt2003bayesian,zammitmangion2022deep}; compositions of invertible maps also underlie normalizing flows \citep{rezende2015variational}. These literatures motivate expressive intermediate maps; our focus is the compatibility of identity-based regularization with function-level contraction.

A separate line of work controls unnecessary depth. \citet{duvenaud2014avoiding} identify pathologies from repeated GP composition, while \citet{dunlop2018howdeep} study how depth changes DGP behavior. Residual networks instead parameterize a layer as identity plus a perturbation \citep{he2016deep}. Identity mean functions are also used in intermediate GP layers by \citet{salimbeni2017doubly}. Our use of the identity is related but statistically different: the classical DGP architecture remains, but a posterior factor tilts mass among likelihood-equivalent decompositions. The novelty of the present work 
is motivated by a non-concentration phenomenon, addressed by an exact posterior-odds symmetry-breaking formula, and a set of rigorous conditions under which identity shrinkage of redundant layers coexists with function-level contraction. 
\paragraph{Organization.}
Section~\ref{sec:setup} introduces the model, metrics, and terminology.  Section~\ref{sec:canonical composition} proves that adjacent-layer symmetries prevent DGP posteriors from selecting a unique layer tuple.  Section~\ref{sec:asym-penalty} introduces the identity-biased augmented posterior and proves posterior-odds separation within symmetry-related classes.  Section~\ref{sec:theory} proves contraction preservation and layerwise parsimony. 
Section~\ref{sec:experiments} gives concise numerical diagnostics; the supplementary materials contains auxiliary lemmas, proofs, algorithms, and experiment-design details.

\section{Model Definition and Preliminaries}\label{sec:setup}

\subsection{Deep Gaussian Processes}\label{sec:dgp}
A deep Gaussian process is the composition of multiple Gaussian processes, such that the output of one GP is the input of the next GP.
Specifically, for
$J\in\mathbb{N}$ denoting the number of layers and $\X\subset\mathbb{R}^d$ a compact set, the deep Gaussian process is a stochastic process on $\X$ defined in the following way:
\begin{align}\label{eq:dgp}
    f & = f_J \circ f_{J-1} \circ \cdots \circ f_1, \\
    f_j & = (f_{j,1}, f_{j,2}, \cdots, f_{j,d})^T, \;\forall 1\le j \le J-1,\nonumber\\
    f_{j,k} & = Z_{j,k}, \; (Z_{j,k}(x): x\in \mathbb{R}^d) \sim \mr{GP}_j, \; \forall 1\le j \le J-1, 1\le k\le d, \nonumber\\
    f_J & = Z_J, \; (Z_J(x): x\in \mathbb{R}^d) \sim \mr{GP}_J.\nonumber
\end{align}
 All layerwise Gaussian processes in \eqref{eq:dgp} are defined on $\mathbb{R}^d$; the regression likelihood evaluates their composition only on the input domain $\X$.

In this paper, we are mainly interested in the regression  problem of estimating an unknown function  with deep Gaussian process priors. That is, let  $\D_n = \{(X_i, Y_i)\}_{i=1}^n$ be the training data, where $X_1, X_2, \cdots, X_n \in \X$ are i.i.d. draws from some probability measure $\mu_X$, and let $Y_1, Y_2, \cdots, Y_n \in\mathbb{R}$ come from the sampling model
\begin{equation}\label{eq:reg}
    Y_{i}=f^*(X_{i})+\varepsilon_{i},\qquad\varepsilon_{i}\stackrel{\text{i.i.d.}}{\sim}N(0,\sigma^{2}),
\end{equation}
with $f^*:\X \to \mathbb{R}$ being the data-generating function.
By putting a DGP prior \eqref{eq:dgp} on the regression function $f^*$ in \eqref{eq:reg}, we obtain a valid Bayesian model.

The objective of deep Gaussian process regression has two parts. First, we would like to estimate the data-generating function $f^*$; Second, if the data-generating function $f^*$ admits a ``true'' decomposition into layer-dependent functions: 
\begin{equation}\label{eq:f* decom}
    f^* = f^{*}_{J} \circ f^{*}_{J-1} \circ \cdots \circ f^{*}_1,
\end{equation}
then we would like to estimate the layerwise maps $f^{*}_1, f^{*}_2, \cdots, f^{*}_J$ as latent infinite-dimensional parameters. The first objective is always well-defined, whereas the second objective is a bit tricky. We can always decompose $f^*$ into the composition form as \eqref{eq:f* decom} by adding enough identity mappings, so it is easy to see the decomposition is not unique without additional assumptions. Nevertheless, this does not make the second objective invalid. While the discovery of such a scientifically meaningful  (de)composition by the regular likelihood alone is not possible, with a nonparametric Bayesian prior like DGP, the question of identifiability, and subsequently, contraction, to such (de)composition becomes unclear.

\subsection{Function Spaces and Norms}\label{sec:tech prelim}

The likelihood is evaluated on the compact input set $\X\subset\mathbb{R}^d$, equipped with a measure $\mu_X$ that generates the observed $X_i$, while every layerwise process in \eqref{eq:dgp} is defined on $\mathbb{R}^d$. For layerwise norms we use an enlarged compact domain $\tilde{\X}\subset\mathbb{R}^d$ whose interior contains $\X$. The domain $\tilde{\X}$ is equipped with a probability measure $\tilde{\mu}$ with $\operatorname{supp}(\tilde\mu)=\tilde\X$. We assume $\mu_X\ll\tilde{\mu}$ and that $d\mu_X/d\tilde{\mu}$ is essentially bounded. For all $1\le p<\infty$, define
\begin{align*}
\|f_j\|_{\ell^p(\tilde\mu)} & \triangleq \big[\int_{\tilde{\X}} \|f_j(x)\|_p^p d\tilde{\mu}(x)\big]^{1/p}, \;\; 1\le j\le J-1, \\
\|f_J\|_{\ell^p(\tilde\mu)} & \triangleq \big[\int_{\tilde{\X}}|f_J(x)|^p\,d\tilde\mu(x)\big]^{1/p},
\end{align*}
where in the case of $p=\infty$, the integrals become the sup norms on the support of $\tilde{\mu}$. 
With a slight abuse of notation, for $\theta=(f_1,\ldots,f_J)$ define
\begin{equation}\label{eq:theta norm}
    \|\theta\|_{\ell^p}
    =
    \max\left\{\max_{1\le j<J}\|f_j\|_{\ell^p(\tilde\mu)},\,\|f_J\|_{\ell^p(\tilde\mu)}\right\}.
\end{equation}
Let $\mathcal{F}_j,1\le j\le J$ denote the layerwise maps whose sup norm, evaluated on its associated domain, is bounded:
\begin{align*}
\mathcal F_j
&=\{f:\mathbb R^d\to\mathbb R^d:\|f\|_{\ell^\infty(\tilde\mu)}<\infty\},\;\; 1\le j<J,\quad
\mathcal F_J
=\{f:\mathbb R^d\to\mathbb R:\|f\|_{\ell^\infty(\tilde\mu)}<\infty\}.
\end{align*}
Further denote $\Theta = \mc F_1\times\mc F_2\times\cdots\times\mc F_J$ as their product space, which will be the space of layer tuples studied in this paper.

In addition, we use $\Pi(d\theta)$ to denote the measure induced by the DGP prior on the space $\Theta$, which is a product measure across layers. With a little abuse of notations, we also use $\Pi(df_j)$ to denote the marginal prior law on the $j$th layer, which is itself the product of the coordinate-wise GP law of $f_{j,k}$. For each layerwise process $\mr{GP}_j$, let $\mc H_j$ denote its reproducing kernel Hilbert space (RKHS), and $\|\cdot\|_{\mc H_j}$ be its corresponding RKHS norm. 
When a restricted RKHS norm on a compact set is used explicitly below, it means the infimum of the ambient RKHS norms of all extensions agreeing on that set.

Finally, for positive sequences $a_n,b_n$, we write $a_n\lesssim b_n$ if $a_n\le Cb_n$ for a constant $C$ independent of $n$, $a_n\gtrsim b_n$ for the reverse relation, $a_n\asymp b_n$ when both hold, and $a_n\ll b_n$ (equivalently $b_n\gg a_n$) when $a_n/b_n\to0$. An expression such as $\ln q_n\gtrsim-r_n$ with $0<q_n\le1$ and $r_n>0$ is shorthand for $-\ln q_n\lesssim r_n$.

\section{Adjacent Symmetries in DGP}\label{sec:non-id}\label{sec:canonical composition}
\subsection{Compositions and Equivalent Classes}
Fix a maximum depth $J\ge 2$. For all $\theta\in\Theta$, recall the composition operator $\mc C(\theta) := f_J\circ\cdots\circ f_1$ previously defined in \eqref{eq:comp}.
Many distinct $\theta$ induce the same composed map $\mc C(\theta)$.
More precisely, for all $f\in \ell^\infty(\X)$, we can define the equivalence class, whose composition equals to $f$, as
\begin{equation}\label{eq:equi class}
\mc E(f) := \{\theta\in\Theta:\ \mc C(\theta)(x)=f(x), \forall x\in \X\}.
\end{equation}
The class $\mc E(f)$ is closed with respect to a large family of operators. Below we provide some examples on the case of $J=2$, which can be easily extended to the case of $J\ge 3$ by applying these operators on the first two layers.

\begin{example}[Sign Flip]\label{example11}
    For all mappings $f:\mathbb{R}^d\to\mathbb{R}^d$, define operators ``$-$'' and ``$\dagger$'' as
    \begin{align*}
(-f)(x):= - f(x), \quad (\dagger f)(x) := f(-x), \quad \forall  x\in\mathbb{R}^d.
\end{align*}
    Then it is easy to verify for all $f_1\in \mc F_1, f_2\in \mc F_2$, $(\dagger f_2) \circ (-f_1) = f_2 \circ f_1$. In other words, the class $\mc E(f)$ is closed with respect to the pair of operators $(-, \dagger)$.
\end{example}

\begin{example}[Translation]\label{example2}
    For a constant vector $a\in\mathbb{R}^d$, define operators $\sharp_a$ and $\flat_a$ by
    \begin{align*}
    (\sharp_a f)(x)=f(x)+a,
    \qquad
    (\flat_a f)(x)=f(x+a).
\end{align*}
    For $f_1\in\mc F_1$ and $f_2\in\mc F_2$, we have
$
    (\flat_{-a}f_2)\circ(\sharp_a f_1)=f_2\circ f_1.
$
\end{example}

\begin{example}[Affine map]\label{example:affine}
    For $a\in\mathbb{R}^d$ and $r\ne 0$, let
    \begin{align*}
    (\sharp_{a,r} f)(x):=r f(x)+a,
    \qquad
    (\flat_{a,r} f)(x):=f((x-a)/r).
\end{align*}
    Then, for all $f_1\in\mc F_1$ and $f_2\in\mc F_2$,
$
    (\flat_{a,r} f_2)\circ(\sharp_{a,r} f_1)=f_2\circ f_1.
$
\end{example}

The above three examples are special cases of the {\it adjacent layer symmetry}. Specifically, let $S$ be a bimeasurable bijection from $\mathbb{R}^d$ to itself such that the transformation below and its inverse are well defined and measurable on $\Theta$. For $1\le j\le J-1$ and $\theta=(f_1, f_2, \cdots, f_J)\in \Theta$, define the {\it adjacent layer symmetry map} as
\begin{equation}\label{eq:Phi jS}
    \Phi_{j,S}(f_1,\ldots,f_J)
    =
    (f_1,\ldots,S\circ f_j, f_{j+1}\circ S^{-1},\ldots,f_J).
\end{equation}
It is straightforward to verify that the adjacent layer symmetry map $\Phi_{j,S}$ preserves the equivalence class structure in \eqref{eq:equi class}.

\begin{proposition}\label{prop:ope}
For any adjacent layer symmetry map $\Phi_{j,S}$ that is well defined on $\Theta$, 
    \begin{align*}
\mc C(\Phi_{j,S}(\theta)) =  \mc C(\theta).
\end{align*}
\end{proposition}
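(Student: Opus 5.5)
The plan is a direct pointwise verification using associativity of function composition and the identity $S^{-1}\circ S=\id_{\mathbb{R}^d}$. Write $\theta=(f_1,\ldots,f_J)$ and $\Phi_{j,S}(\theta)=(g_1,\ldots,g_J)$. By \eqref{eq:Phi jS}, $g_i=f_i$ for $i\notin\{j,j+1\}$, while $g_j=S\circ f_j$ and $g_{j+1}=f_{j+1}\circ S^{-1}$. Since composition of maps is associative, the two new layers can be grouped inside $\mc C(\Phi_{j,S}(\theta))$ as
\begin{align*}
g_{j+1}\circ g_j=(f_{j+1}\circ S^{-1})\circ(S\circ f_j)=f_{j+1}\circ(S^{-1}\circ S)\circ f_j=f_{j+1}\circ f_j .
\end{align*}
This step uses that $S$ is a bijection of $\mathbb{R}^d$, so $S^{-1}\circ S$ is the identity on all of $\mathbb{R}^d$. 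Because of this, no assumption about the range of $f_j$ is needed.

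Substituting this back and again using associativity gives
\begin{align*}
\mc C(\Phi_{j,S}(\theta))=f_J\circ\cdots\circ f_{j+2}\circ(f_{j+1}\circ f_j)\circ f_{j-1}\circ\cdots\circ f_1=\mc C(\theta),
\end{align*}
as an identity of maps on $\mathbb{R}^d$. In particular it holds pointwise for every $x\in\X$. For the boundary case $j=J-1$, the map $g_J=f_J\circ S^{-1}$ is real-valued, and the same computation applies verbatim. It follows that $\theta\in\mc E(f)$ implies $\Phi_{j,S}(\theta)\in\mc E(f)$.

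There is no substantive obstacle. The only point to watch is that the statement assumes $\Phi_{j,S}$ is well defined on $\Theta$, meaning $S\circ f_j\in\mc F_j$ and $f_{j+1}\circ S^{-1}\in\mc F_{j+1}$. This assumption ensures both sides are compositions of elements of $\Theta$, and it plays no role in the algebraic identity itself. Examples~\ref{example11}--\ref{example:affine} are recovered as the special cases $S(x)=-x$, $S(x)=x+a$, and $S(x)=rx+a$.
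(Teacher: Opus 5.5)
Your proof is correct and is the same direct verification the paper relies on: the paper gives no separate proof, calling the identity ``straightforward to verify'' and using the regrouping $(f_{j+1}\circ S^{-1})\circ(S\circ f_j)=f_{j+1}\circ f_j$ from the introduction together with associativity. Your added observations on the $j=J-1$ case and on the role of the well-definedness assumption are accurate.
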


Proposition \ref{prop:ope} shows, even only considering adjacent layers, the equivalence class $\mc E$ is closed under a large family of operators from $\Theta$ to itself. With three or more layers, the class of $\mc E$ can include even more elements by simply stacking multiple adjacent layer symmetry maps. For example, with $S_1, S_2$ being bijections from $\mathbb{R}^d$ to itself, one can apply them to three layers and get
\begin{align*}
\mc C(\Phi_{2,S_2} (\Phi_{1,S_1}(\theta))) = \mc C(\theta).
\end{align*}
In the regression setting \eqref{eq:reg}, the response of $Y_i$ depends on $X_i$ only through composed map $f^*$, regardless of the decomposition $f_1^*, f_2^*, \cdots, f_J^*$, making the individual layers not identifiable. 

\subsection{Posterior Non-identifiability under Adjacent Symmetries}
Proposition \ref{prop:ope} describes the non-identifiability of the composition structure under the sampling model, leaving the hope of identifying a representative element among the class $\mc{E}(f)$ to the prior. However, independent centered Gaussian process priors on the layerwise coordinates do not automatically define a unique representative element.

To begin with, let us introduce the main tool for characterizing the posterior behavior, namely the $\rho$-fractional posterior.
Specifically, for $\rho\in(0,1]$, the $\rho$-fractional posterior of $\theta = (f_1, f_2, \cdots, f_J)$ is defined as
\begin{equation*}\label{eq:fractional posterior}
\Pi_\rho(d\theta|\D_n) \propto [p(\D_n|\C(\theta))]^\rho \Pi(d\theta).
\end{equation*}
When $\rho=1$, the $\rho$-fractional posterior reduces to the standard Bayesian posterior. For $\rho\in(0,1)$, contraction in R\'{e}nyi divergence can be obtained from a suitable local prior-mass condition \citep{bhattacharya2019bayesian}.

Recalling the adjacent-layer symmetry map $\Phi_{j,S}$ defined in equation \eqref{eq:Phi jS}, 
define the prior induced by the mapping $\Phi_{j,S}$ as
\begin{equation}\label{eq:transported-prior}
    \Pi^{\Phi_{j,S}}(B):=\Pi\big(\Phi_{j,S}(B)\big),
    \qquad B\subseteq\Theta\ \text{measurable}.
\end{equation}
Assume $\Pi^{\Phi_{j,S}}\ll\Pi$ and define the correctly oriented Radon-Nikod\'{y}m derivative
\begin{equation}\label{eq:radon-nikodym}
    L_{\Phi_{j,S}}(\theta)
    :=\frac{d\Pi^{\Phi_{j,S}}}{d\Pi}(\theta).
\end{equation}
The following compares the posterior mass of a measurable set $B$ and $\Phi_{j,S}(B)$ under the DGP prior.

\begin{proposition} \label{thm:non-id-general}
Let $\Phi_{j,S}$ be an admissible adjacent-layer symmetry map in the sense of \eqref{eq:transported-prior} and \eqref{eq:radon-nikodym}.  Then, for every $\Pi$-measurable $B\subset\Theta$ and every $\rho\in(0,1]$, the $\rho$-fractional posterior satisfies
\begin{align}
    \Pi_\rho(\Phi_{j,S}(B)\mid\D_n)
    =\int_B L_{\Phi_{j,S}}(\theta)\,\Pi_\rho(d\theta\mid\D_n).
\label{eq:L function}
\end{align}
In particular, if $\Pi^{\Phi_{j,S}}=\Pi$, equivalently $L_{\Phi_{j,S}}=1$ $\Pi$-almost surely, then
\[
    \Pi_\rho(\Phi_{j,S}(B)\mid\D_n)=\Pi_\rho(B\mid\D_n).
\]
\end{proposition}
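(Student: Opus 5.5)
The argument is a change of variables in the defining integral of the fractional posterior. The one structural fact it uses is that the likelihood is invariant under $\Phi_{j,S}$, which is Proposition~\ref{prop:ope}. Write $\ell_n(\theta):=[p(\D_n\mid\C(\theta))]^\rho$ and $Z_n:=\int_\Theta \ell_n\,d\Pi$, so that
\[
\Pi_\rho(A\mid\D_n)=Z_n^{-1}\int_A \ell_n\,d\Pi
\]
for measurable $A$. By Proposition~\ref{prop:ope}, $\C(\Phi_{j,S}(\theta))=\C(\theta)$ for every $\theta$, and therefore $\ell_n\circ\Phi_{j,S}=\ell_n$. Since $\Phi_{j,S}$ is a bijection of $\Theta$ with inverse $\Phi_{j,S^{-1}}$, we also get $\ell_n\circ\Phi_{j,S}^{-1}=\ell_n$.

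The key step is to write $\Pi_\rho(\Phi_{j,S}(B)\mid\D_n)$ as an integral over $B$. By definition \eqref{eq:transported-prior}, $\Pi^{\Phi_{j,S}}$ is the pushforward of $\Pi$ under $\Phi_{j,S}^{-1}$. Indeed,
\[
\Pi^{\Phi_{j,S}}(B)=\Pi(\Phi_{j,S}(B))=\Pi\bigl((\Phi_{j,S}^{-1})^{-1}(B)\bigr).
\]
The change-of-variables formula for pushforwards then gives, for nonnegative measurable $g$,
\[
\int g\,d\Pi^{\Phi_{j,S}}=\int g\circ\Phi_{j,S}^{-1}\,d\Pi.
\]
Apply this with $g=\mathbf 1_B\,\ell_n$. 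Because $\mathbf 1_B\circ\Phi_{j,S}^{-1}=\mathbf 1_{\Phi_{j,S}(B)}$ and $\ell_n\circ\Phi_{j,S}^{-1}=\ell_n$, we obtain
\[
\int_{\Phi_{j,S}(B)}\ell_n\,d\Pi=\int_B \ell_n\,d\Pi^{\Phi_{j,S}}=\int_B \ell_n\,L_{\Phi_{j,S}}\,d\Pi .
\]
The last equality uses the Radon--Nikod\'ym derivative \eqref{eq:radon-nikodym}. Dividing by $Z_n$ and recognising $Z_n^{-1}\ell_n\,d\Pi=\Pi_\rho(d\theta\mid\D_n)$ yields \eqref{eq:L function}.

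For the particular case, $\Pi^{\Phi_{j,S}}=\Pi$ is equivalent to $L_{\Phi_{j,S}}=1$ $\Pi$-a.s., by uniqueness of the Radon--Nikod\'ym derivative. Substituting $L_{\Phi_{j,S}}=1$ into \eqref{eq:L function} gives $\Pi_\rho(\Phi_{j,S}(B)\mid\D_n)=\Pi_\rho(B\mid\D_n)$.

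The main obstacle is measure-theoretic bookkeeping rather than any estimate. One point is the orientation of the pushforward, which must be checked against \eqref{eq:transported-prior}. The others are the measurability of $\Phi_{j,S}(B)$, which follows from the assumed bimeasurability of $\Phi_{j,S}$ on $\Theta$, and the requirement $0<Z_n<\infty$. The normaliser is finite because the Gaussian likelihood is bounded by $(2\pi\sigma^2)^{-n\rho/2}$. It is positive because the likelihood is strictly positive everywhere. I would state these facts briefly at the start of the proof.
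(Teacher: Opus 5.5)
Your proposal is correct and follows essentially the same route as the paper. Both arguments use Proposition~\ref{prop:ope} to make the fractional likelihood invariant under $\Phi_{j,S}$, rewrite $\int_{\Phi_{j,S}(B)}\ell_n\,d\Pi$ as an integral over $B$ against $\Pi^{\Phi_{j,S}}$, and then replace $d\Pi^{\Phi_{j,S}}$ by $L_{\Phi_{j,S}}\,d\Pi$. The differences are cosmetic: you apply the invariance to $\Phi_{j,S}^{-1}$ rather than $\Phi_{j,S}$, and you spell out the pushforward orientation, the measurability of $\Phi_{j,S}(B)$, and $0<Z_n<\infty$, which the paper leaves implicit.
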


Proposition \ref{thm:non-id-general} shows the ability of the fractional posterior to distinguish the set $B$ versus $\Phi_{j,S}(B)$ relies entirely on the Radon-Nikodym derivative \eqref{eq:radon-nikodym}. Unfortunately, for DGP employing some common Gaussian processes for its layers, and for some simplest adjacent layer symmetry maps, this Radon-Nikodym derivative can be upper and lower bounded, rendering the (fractional) posterior non-identifiable.

\begin{example}[Sign flip, continued]\label{eg:sign flip 2}
Assume that the relevant layerwise Gaussian process laws are centered and invariant under the output sign change and input reflection appearing in the map
\begin{align*}
    \Phi_{j,-}(f_1,\ldots,f_J)
    :=(f_1,\ldots,-f_j,\dagger f_{j+1},\ldots,f_J).
\end{align*}
Then $\Pi^{\Phi_{j,-}}=\Pi$, so $L_{\Phi_{j,-}}=1$ $\Pi$-almost surely.  Hence, for every $\Pi$-measurable $B$,
\[
    \Pi_\rho(\Phi_{j,-}(B)\mid\D_n)=\Pi_\rho(B\mid\D_n),
\]
and the posterior cannot distinguish $\theta$ from its sign-reflected representation $\Phi_{j,-}(\theta)$.
\end{example}

The proof of Example~\ref{eg:sign flip 2} is straightforward by utilizing the symmetry. It effectively shows  $\theta$ and $\Phi_{j,-}(\theta)$ are treated in a completely same way by the posterior.
The same logic applies to the translation example discussed. For notational convenience, we introduce a function $\mathbbm{1}_\X:\mathbb{R}^d \to \mathbb{R}$, such that $\mathbbm{1}_\X(x)= 1,\forall x\in \X$ and assume $\mathbbm{1}_\X$ belongs to $\mathcal{H}_1$ with finite RKHS norm. 

\begin{example}[Translation, continued]\label{eg:translation 2}
Let the layerwise GP prior in \eqref{eq:dgp} be a stationary GP with zero mean. Recalling the translation maps $\sharp$ and $\flat$ defined in Example \ref{example2}, for $a\in\R^d$, the adjacent layer symmetry map
\begin{equation*}\label{eq:translation map}
    \Phi_{1,a}(f_1,f_2,\ldots,f_J)
    :=(\sharp_a f_1,\; \flat_{-a} f_2,\; f_3,\ldots,f_J)
\end{equation*}
satisfies 
\begin{equation}
L_{\Phi_{1,a}}(f_1,\cdots, f_J) = \prod_{k=1}^d \exp\Big(-a_k\langle f_{1,k},\mathbbm{1}_{\X} \rangle_{\mc H_1}-\frac{a_k^2}{2}\|\mathbbm{1}_{\X} \|_{\mc H_1}^2\Big), \;\; \mr{a.s.}.
\end{equation}
\end{example}
This derivative is positive and finite almost surely. 
The example does not apply to an unmodified stationary Mat\'{e}rn first-layer process on all of $\mathbb R^d$, whose RKHS contains no nonzero global constants. Corollary~\ref{cor:non-conv matern} instead uses a compactly supported RKHS plateau and an independent Gaussian-coordinate argument.

For more general operators $S$ as in Proposition~\ref{prop:ope}, the Radon-Nikod\'ym derivative before and after the transformation depends on the specific prior and closed form representation is generally not available without additional assumptions.

\subsection{Failure of Layerwise Contraction}
The posterior non-identifiability in Proposition \ref{thm:non-id-general} has a direct consequence for layerwise learning. If for a set $B\subset\Theta$, one can find a collection of adjacent layer symmetry maps, such that all the maps have lower bounded Radon-Nikod\'ym derivatives \eqref{eq:radon-nikodym} on the set $B$, while possessing disjoint images of $B$, then the posterior simply cannot distinguish between the set $B$ and its images under these adjacent layer symmetry maps. If, as the set $B$ shrinks, one can find arbitrarily many adjacent layer symmetry maps satisfying these properties with a common lower bound, then the posterior mass on $B$ can be arbitrarily small. 

Specifically, let $d_\Theta$ be a measurable metric or pseudometric on $\Theta$ and denote, for some $\epsilon>0$, define a set
\begin{align*}
    B_{\theta^*}(\epsilon,d_\Theta)=\{\theta\in\Theta:d_\Theta(\theta,\theta^*)\le\epsilon\}
\end{align*}
as the $\epsilon$-radius $d_\Theta$-ball around the tuple $\theta^*\in\Theta$. 
The standard Bayesian (fractional) posterior contraction theory concerns the (fractional) posterior mass on the ball $B_{\theta^*}(\epsilon,d_\Theta)$, hoping to show the mass goes to one in probability. However, we go through the opposite way. 
The following theorem shows that, under a packing condition for the images of adjacne layer symmetry maps, the posterior mass of $B_{\theta^*}(\epsilon,d_\Theta)$ is uniformly bounded by the reciprocal of the packing number, independently of the sample size $n$.

\begin{theorem}\label{thm:non-conv}
For the regression problem \eqref{eq:reg} with $J$-layer ($J\ge 2$) DGP prior as in \eqref{eq:dgp}, for all $\theta^*\in\Theta$, for all $\epsilon>0$, if there exists $N_\epsilon\in\mathbb{N}$ and a sequence of adjacent layer symmetry maps $\Phi_{j,S_1}, \Phi_{j,S_2}, \cdots, \Phi_{j,S_{N_\epsilon}}$, such that:
\begin{enumerate}[label=(\roman*)]
    \item The sets $\Phi_{j,S_l} (B_{\theta^*}(\epsilon,d_\Theta)), 1\le l\le N_\epsilon$ are pairwise disjoint;
    \item There exists an absolute constant $c_1>0$ such that,
    $
    \operatorname*{ess\,inf}_{\theta\in B_{\theta^*}(\epsilon,d_\Theta)} L_{\Phi_{j,S_l}}(\theta) \ge c_1, \;\forall 1\le l \le N_\epsilon \text{ for }L_{\Phi_{j,S_l}} \text{ in }\eqref{eq:radon-nikodym},
    $
    where the essential infimum is with respect to $\Pi$.
\end{enumerate}
Then, for all $\rho\in(0,1]$, for all sample size $n$ and all observed dataset $\D_n$,
\begin{align*}
    \Pi_\rho(B_{\theta^*}(\epsilon,d_\Theta)\mid\D_n)
    \le (c_1N_\epsilon)^{-1}.
\end{align*}
\end{theorem}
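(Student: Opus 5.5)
Write $B:=B_{\theta^*}(\epsilon,d_\Theta)$ and $\Phi_l:=\Phi_{j,S_l}$. The plan is to combine Proposition~\ref{thm:non-id-general} with the disjointness assumption (i). The key point is that the images $\Phi_l(B)$ are disjoint subsets of $\Theta$, so their fractional posterior masses sum to at most one. Each of these masses can in turn be bounded below by $c_1$ times the posterior mass of $B$ itself.

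\textbf{Step 1: lower bound each image.} Fix $l\in\{1,\dots,N_\epsilon\}$ and apply \eqref{eq:L function} with the measurable set $B$, which gives
\[
\Pi_\rho(\Phi_l(B)\mid\D_n)=\int_B L_{\Phi_l}(\theta)\,\Pi_\rho(d\theta\mid\D_n).
\]
The fractional posterior is absolutely continuous with respect to $\Pi$, since its density is proportional to $[p(\D_n\mid\C(\theta))]^\rho$. This density is finite and positive, so the normalizing constant lies in $(0,\infty)$ for Gaussian likelihoods. Consequently, a $\Pi$-essential infimum is also a $\Pi_\rho(\cdot\mid\D_n)$-essential lower bound. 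Assumption (ii) then yields $L_{\Phi_l}\ge c_1$ for $\Pi_\rho(\cdot\mid\D_n)$-almost every $\theta\in B$, and hence
\[
\Pi_\rho(\Phi_l(B)\mid\D_n)\ge c_1\,\Pi_\rho(B\mid\D_n).
\]

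\textbf{Step 2: sum over the disjoint images.} By assumption (i) and countable additivity,
\[
1\ \ge\ \Pi_\rho\Big(\bigcup_{l=1}^{N_\epsilon}\Phi_l(B)\,\Big|\,\D_n\Big)=\sum_{l=1}^{N_\epsilon}\Pi_\rho(\Phi_l(B)\mid\D_n)\ \ge\ c_1N_\epsilon\,\Pi_\rho(B\mid\D_n).
\]
Rearranging gives $\Pi_\rho(B\mid\D_n)\le (c_1N_\epsilon)^{-1}$. The argument uses no property of $n$ or of $\D_n$ beyond the posterior being well defined, so the bound holds uniformly in both, as claimed.

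\textbf{Main obstacle: measurability.} The only delicate point is whether the images $\Phi_l(B)$ are measurable, which is needed both for Proposition~\ref{thm:non-id-general} and for the additivity in Step 2. This is covered by the admissibility assumption that $\Phi_{j,S}$ and its inverse are measurable bijections of $\Theta$, so that $\Phi_l(B)=(\Phi_l^{-1})^{-1}(B)$ is measurable. Measurability of $B$ itself follows from the measurability of $d_\Theta$.

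It also helps to recall why \eqref{eq:L function} holds, because this shows where the orientation of $L$ matters. The likelihood is invariant under $\Phi_l$ by Proposition~\ref{prop:ope}. The prior transports as $\Pi(\Phi_l(B))=\int_B L_{\Phi_l}\,d\Pi$ by \eqref{eq:transported-prior}--\eqref{eq:radon-nikodym}. Combining these two facts, together with the common normalizing constant, gives the identity directly, so I would simply invoke the proposition.
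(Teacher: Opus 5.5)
Your proof is correct and follows essentially the same route as the paper: you lower-bound each image's fractional posterior mass by $c_1\Pi_\rho(B\mid\D_n)$ via Proposition~\ref{thm:non-id-general} and assumption (ii), then sum over the disjoint images using (i). Your observation that $\Pi_\rho(\cdot\mid\D_n)\ll\Pi$, which lets the $\Pi$-essential infimum in (ii) be used inside the $\Pi_\rho$-integral, slightly tightens the paper's corresponding step, which writes a pointwise infimum there.
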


The theorem \ref{thm:non-conv} lays down two elements that prevent the posterior contraction to the layerwise structure: a collection of adjacent layer symmetry maps that have disjoint images and uniformly lower bounded Radon-Nikod\'{y}m derivative. 
Below we apply the theorem to a wide class of stationary zero mean Gaussian processes (including GPs with Mat\'{e}rn or squared-exponential covariance functions), showing the (fractional) posterior mass around any layer tuple can be arbitrarily small.

\begin{corollary}
\label{cor:non-conv matern}
For the regression problem \eqref{eq:reg} with $J$-layer ($J\ge 2$) DGP prior defined in \eqref{eq:dgp}, suppose the layerwise GP priors are zero mean stationary GPs, and suppose the constant function belongs to the RKHS, in the sense that there exists $c>0$
and $\mathbbm{1}_{\tilde{\X}}:\mathbb{R}^d \to \mathbb{R}$, such that $\mathbbm{1}_{\tilde{\X}}(x)= 1,\forall x\in \tilde{\X}$ and $\|\mathbbm{1}_{\tilde{\X}}\|_{\mc H_j}\le c, \forall j<J$. 
Then for all $\theta^*\in\Theta$, 
for all $\epsilon\in(0,1/2)$, $\rho\in(0,1]$, $p\in[1,+\infty]$, 
for all sample size $n$ and all observed dataset $\D_n$,
$$\Pi_\rho(\|\theta-\theta^*\|_{\ell^p(\tilde{\mu})}\le \epsilon|\D_n) \lesssim \epsilon.$$
The implicit constant is independent of $n$, $\D_n$, $\epsilon$, and $\theta^*$, but may depend on the fixed GP kernels and $\tilde\X$.
\end{corollary}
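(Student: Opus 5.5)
We apply Theorem~\ref{thm:non-conv} with $j=1$, $d_\Theta(\theta,\theta^*)=\|\theta-\theta^*\|_{\ell^p}$ from \eqref{eq:theta norm}, and with translation symmetries $S_l(x)=x+a_l$ as in Example~\ref{eg:translation 2}. The corresponding maps are $\Phi_{1,a_l}(\theta)=(\sharp_{a_l}f_1,\flat_{-a_l}f_2,f_3,\ldots,f_J)$. The hypothesis only gives a plateau $\mathbbm{1}_{\tilde\X}$ in $\mc H_1$, not a global constant. So we use the Cameron--Martin shift $f_{1,k}\mapsto f_{1,k}+a_{l,k}\mathbbm{1}_{\tilde\X}$, which agrees with $\sharp_{a_l}$ on $\tilde\X$. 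Everything that enters the likelihood and the norms lives on $\tilde\X$ or on the image of $f_1$, so this is enough. On the second layer, $\flat_{-a_l}$ is a translation of a stationary field, so its law is invariant and contributes the factor $1$ to the Radon--Nikod\'ym derivative. The Cameron--Martin formula then gives, as in Example~\ref{eg:translation 2}, $L_{\Phi_{1,a_l}}(\theta)=\prod_k\exp\big(-a_{l,k}\langle f_{1,k},\mathbbm{1}_{\tilde\X}\rangle_{\mc H_1}-\tfrac{a_{l,k}^2}{2}\|\mathbbm{1}_{\tilde\X}\|_{\mc H_1}^2\big)$.

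\textbf{Choice of shifts and disjointness (condition (i)).} Choose the shifts along the first coordinate, $a_l=(4l\epsilon,0,\ldots,0)$, so that consecutive images differ by $4\epsilon$. On $\tilde\X$ the first layer of $\Phi_{1,a_l}(\theta)$ equals $f_1+a_l$. Suppose $\theta,\theta'\in B_{\theta^*}(\epsilon)$ and $\Phi_{1,a_l}(\theta)=\Phi_{1,a_m}(\theta')$. Then $f_1+a_l=f_1'+a_m$ on $\tilde\X$, so
\[
4|l-m|\epsilon=|a_l-a_m|\le\|f_1-f_1'\|_{\ell^p(\tilde\mu)}\le 2\epsilon,
\]
using that $\tilde\mu$ is a probability measure, so constants have $\ell^p$ norm equal to their absolute value. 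This forces $l=m$, which gives pairwise disjointness. We take $l=1,\ldots,N_\epsilon$ with $N_\epsilon\asymp 1/\epsilon$, so that $|a_l|\le M$ for a fixed constant $M$; for instance $N_\epsilon=\lfloor M/(4\epsilon)\rfloor$, which is at least $1$ for $\epsilon<1/2$ once $M\ge2$.

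\textbf{Uniform lower bound on $L$ (condition (ii)); the main obstacle.} The quadratic term is bounded below by $e^{-M^2c^2/2}$. The linear term $\langle f_{1,k},\mathbbm{1}_{\tilde\X}\rangle_{\mc H_1}$ is only a measurable linear functional: it is a centered Gaussian variable under $\Pi$, not a continuous function of $\theta$. Hence it is not bounded on the $\ell^p$ ball $B_{\theta^*}(\epsilon)$, and the essential infimum in (ii) can fail. I would handle this with the ``independent Gaussian-coordinate argument'' mentioned after Example~\ref{eg:translation 2}:
\begin{enumerate}[label=(\alph*)]
\item Decompose $f_{1,1}=\xi\,\mathbbm{1}_{\tilde\X}/\|\mathbbm{1}_{\tilde\X}\|^2_{\mc H_1}+R$, where $\xi=\langle f_{1,1},\mathbbm{1}_{\tilde\X}\rangle_{\mc H_1}\sim N(0,\|\mathbbm{1}_{\tilde\X}\|_{\mc H_1}^2)$ and $R$ is independent of $\xi$.
\item Run the packing argument on the intersection $B\cap\{|\xi|\le K\}$. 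On this set $L\ge c_1:=e^{-MKc-M^2c^2/2}$. The shifts move $\xi$ by at most $Mc^2$, so the images remain controlled.
\item Bound the complementary piece using the posterior mass of $\{|\xi|>K\}$.
\end{enumerate}
The difficulty in step (c) is that the likelihood may tilt $\xi$, so the prior tail bound does not transfer directly. My preferred fix avoids the tail bound: re-run the proof of Theorem~\ref{thm:non-conv} directly in the coordinate $\xi$. The shifts act as $\xi\mapsto\xi+t_l$, and the likelihood depends on $\xi$ only through $f_1$ restricted to $\tilde\X$. The key point is that the combined map (shift $\xi$, translate $f_2$) leaves the likelihood invariant. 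So, conditional on the remaining coordinates, the posterior density of $\xi$ over the shifted copies is proportional to the Gaussian density $\phi(\xi)$ alone, and only ratios $\phi(\xi+t_l)/\phi(\xi)$ arise. Averaging over $l$, the mass of the original ball is at most
\[
\sup_\xi \frac{\phi(\xi)}{\sum_l\phi(\xi+t_l)}.
\]
Choose the $t_l$ symmetric around $0$ and spanning a bounded window, e.g.\ $l\in\{-N_\epsilon,\ldots,N_\epsilon\}$. Then this supremum is at most $\lesssim 1/N_\epsilon$: for any $\xi$, the window contains order $N_\epsilon$ of the $t_l$ with $\phi(\xi+t_l)\gtrsim\phi(\xi)$, because of log-concavity of $\phi$ and the boundedness of the window.

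Combining the disjointness of the images with this averaging bound gives $\Pi_\rho(\|\theta-\theta^*\|\le\epsilon\mid\D_n)\lesssim 1/N_\epsilon\asymp\epsilon$. The implicit constants depend only on $c$, $M$ and $\tilde\X$, and not on $n$, $\D_n$, $\epsilon$ or $\theta^*$.
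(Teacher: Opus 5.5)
Your proposal is correct. It uses the same ingredients as the paper's proof:
\begin{itemize}
\item first-coordinate translations of the first layer, paired with input translations of the stationary second layer;
\item packing of the first layer on $\tilde\X$;
\item the Cameron--Martin factor $L_l=\exp\{-a_{l,1}\xi-\tfrac12 a_{l,1}^2\|\mathbbm{1}_{\tilde\X}\|_{\mc H_1}^2\}$, where $\xi=\langle f_{1,1},\mathbbm{1}_{\tilde\X}\rangle_{\mc H_1}$.
\end{itemize}

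Where you differ is in how the unbounded linear term is neutralized. The paper splits the ball into $\{\xi<0\}$ and $\{\xi>0\}$. On each half it uses only shifts of the opposite sign, so that $-a_{l,1}\xi\ge0$ and each $L_l\ge e^{-a_{l,1}^2c^2/2}$. Theorem~\ref{thm:non-conv} then applies to each half as stated.

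You instead keep one symmetric family and control the sum of the $L_l$. Summing Proposition~\ref{thm:non-id-general} over $l$ and using disjointness gives $1\ge\int_B\sum_l L_l\,\Pi_\rho(d\theta\mid\D_n)$. Hence $\Pi_\rho(B\mid\D_n)\le\big(\operatorname*{ess\,inf}_B\sum_l L_l\big)^{-1}$. For $\xi\ge0$, the $N_\epsilon+1$ indices $l\le 0$ each give $L_l\ge e^{-M^2c^2/2}$, and symmetrically for $\xi\le0$.

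That one-line computation is the rigorous content of your ``preferred fix.'' The conditional-density phrasing, the appeal to log-concavity, and steps (a)--(c) can all be dropped. The favourable half-window is exactly the paper's sign observation, applied pointwise rather than by splitting the set.

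What your route buys:
\begin{itemize}
\item a slightly more general packing lemma, since only the sum of the $L_l$ needs a lower bound;
\item no null-set bookkeeping at $\{\xi=0\}$;
\item exact disjointness for the closed ball in the statement, thanks to the $4\epsilon$ spacing;
\item a genuine Cameron--Martin shift under the stated hypothesis.
\end{itemize}
On the last point: your plateau shift $f_{1,1}\mapsto f_{1,1}+a_{l,1}\mathbbm{1}_{\tilde\X}$ only requires the plateau to lie in $\mc H_1$. The appendix instead uses $\sharp_a f_1=f_1+a$, which would need a global constant in $\mc H_1$. Since the likelihood and the first-layer norm only see $f_1$ on $\tilde\X\supset\X$, your modified map preserves everything the argument uses.
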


Together, Theorem~\ref{thm:non-conv} and Corollary \ref{cor:non-conv matern} provide negative information regarding the concentration of the fractional posterior around a fixed representative. Through the adjacent layer symmetry maps, it is possible to construct a sequence of sets possessing comparable (fractional) posterior mass to the set $B_{\theta^*}(\epsilon, d_\Theta)$, thus making the posterior contraction infeasible. 
We would like to comment that this infeasibility concerns the contraction around a specific representative tuple $\theta^*$ at the per-layer level, which is not in conflict with the contraction around $f^*$ at the function level previously proved in literature \citep{finocchio2023posterior,castillo2025deep}. 
In the next section, we propose our augmented DGP, which reconciles infeasibility and contraction class by favoring intermediate layers close to identity, selecting among likelihood-equivalent decompositions without changing the DGP architecture. Unlike function-space normalizing flows \citep{shi2024universal}, it regularizes layer representations rather than constructing invertible probability transports.

\section{Symmetry-breaking by Augmentations}\label{sec:asym-penalty}
We now propose our symmetry-breaking device through intermediate layer penalization.  The likelihood of DGP  remains unchanged, but the posterior is reweighted by layer-aware factors that prefer  representatives close to the identity map within each equivalent class.
For the identity mapping $\mr{id}$  on $\mathbb{R}^d$, for all $1\le j\le J-1$, define the identity-deviation factor
\begin{equation}\label{eq:Rj}
    R_j(f_j)=\exp\{-\eta_{j,n}\|f_j-\id\|_{\ell^p(\tilde\mu)}\},
\end{equation}
and set $R_J\equiv1$. The parameter $\eta_{j,n}\ge0$ controls   strength of   the augmentation at layer $j$.

\begin{definition}[Augmented fractional posterior]\label{def:augmented_fractional_posterior}
    For $R_j$ defined in \eqref{eq:Rj}, the augmented prior is defined as
    \begin{equation*}\label{eq:augmented-prior}
        \tilde{\Pi}(d\theta) \propto \prod_{j=1}^{J-1} R_j(f_j) \;\Pi(d\theta).
    \end{equation*}
    and the augmented
    $\rho$-fractional posterior is therefore defined as
    \begin{equation}\label{eq:augmented-posterior}
        \tilde{\Pi}_\rho(d\theta|\D_n) \propto [p(\D_n|\C(\theta))]^\rho \prod_{j=1}^{J-1} R_j(f_j) \;\Pi(d\theta),
    \end{equation}
following the convention in \citet{linero2025defensive} and \citet{bhattacharya2019bayesian}.
\end{definition}

The properties of the augmented fractional posterior critically depend on the choice of $\eta_{j,n}$. In this paper, we enforce the restriction that
\begin{equation*}\label{eq:eta order}
    \eta_{1,n}\ge\eta_{2,n}\ge\cdots\ge\eta_{J-1,n}\ge 1.
\end{equation*}
This means earlier hidden layers are penalized more strongly, while layers closer to the final output are penalized more weakly. It is worth mentioning that the final layer has no penalty, since one always needs at least one active layer to fit the data-generating function $f^*$.
This choice encourages the DGP model to estimate the data as well as possible using later layers, and to ``activate'' earlier layers only when the true data-generating function cannot be fitted with fewer effective layers. 

The augmentation changes posterior odds between symmetry-related decompositions through the difference in their total identity penalty. Specifically, for an adjacent-layer symmetry map $\Phi_{j,S}$ from \eqref{eq:Phi jS}, define the exact penalty-energy difference
\begin{equation}\label{eq:zero energy}
\Delta_{j,S,n}(\theta)\coloneqq
\begin{cases}
\eta_{j,n}\big[\|S\circ f_j-\id\|_{\ell^p(\tilde\mu)}-\|f_j-\id\|_{\ell^p(\tilde\mu)}\big]\\
\qquad+\eta_{j+1,n}\big[\|f_{j+1}\circ S^{-1}-\id\|_{\ell^p(\tilde\mu)}-\|f_{j+1}-\id\|_{\ell^p(\tilde\mu)}\big],
& 1\le j\le J-2,\\[0.4em]
\eta_{J-1,n}\big[\|S\circ f_{J-1}-\id\|_{\ell^p(\tilde\mu)}-\|f_{J-1}-\id\|_{\ell^p(\tilde\mu)}\big],
& j=J-1.
\end{cases}
\end{equation}
The second line reflects that the final layer is unpenalized. Thus $\Delta_{j,S,n}(\theta)$ is exactly the total augmentation energy at $\Phi_{j,S}(\theta)$ minus that at $\theta$. In the regime that $\eta_{j+1,n} \ll \eta_{j,n}$ with $j\le J-2$, the $j$th-layer term is the leading contribution provided the neighboring-layer difference remains uniformly bounded.

The following theorem gives the exact change in posterior mass under an adjacent-layer symmetry. In particular, if the combined factor $L_{\Phi_{j,S}}(\theta)e^{-\Delta_{j,S,n}(\theta)}$ equals one almost surely, then the augmented posterior is invariant under that symmetry.

\begin{theorem}\label{thm:noniden break}
Let $B\subset\Theta$ be measurable and let $\Phi_{j,S}$ be an adjacent-layer symmetry satisfying the assumptions of Proposition~\ref{thm:non-id-general}. Then, for every $\rho\in(0,1]$,
\begin{equation}\label{eq:aug post}
\tilde\Pi_\rho(\Phi_{j,S}(B)\mid\D_n)
=
\int_B L_{\Phi_{j,S}}(\theta)
\exp\{-\Delta_{j,S,n}(\theta)\}
\tilde\Pi_\rho(d\theta\mid\D_n).
\end{equation}
If $\tilde\Pi_\rho(B\mid\D_n)>0$ and $0<m\le L_{\Phi_{j,S}}(\theta)\le M<\infty$ for $\Pi$-almost every $\theta\in B$, then
\[
m e^{-\sup_{\theta\in B}\Delta_{j,S,n}(\theta)}
\le
\frac{\tilde\Pi_\rho(\Phi_{j,S}(B)\mid\D_n)}{\tilde\Pi_\rho(B\mid\D_n)}
\le
M e^{-\inf_{\theta\in B}\Delta_{j,S,n}(\theta)}.
\]
Consequently, the ratio tends to $0$ if $\inf_{\theta\in B}\Delta_{j,S,n}(\theta)  \to +\infty$, and it tends to $+\infty$ if $\sup_{\theta\in B}\Delta_{j,S,n}(\theta)  \to -\infty$.
\end{theorem}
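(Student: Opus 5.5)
The plan is to prove the identity \eqref{eq:aug post} by a change of variables, and then read off the ratio bounds from it. Write the augmented posterior as
\[
\tilde\Pi_\rho(d\theta\mid\D_n)=Z^{-1}\,[p(\D_n\mid\C(\theta))]^\rho\,E(\theta)\,\Pi(d\theta),
\qquad E(\theta)=\prod_{j=1}^{J-1}R_j(f_j),
\]
with $Z$ the normalising constant, which is the same for every set. Then
\[
\tilde\Pi_\rho(\Phi_{j,S}(B)\mid\D_n)=Z^{-1}\int_{\Phi_{j,S}(B)}[p(\D_n\mid\C(\vartheta))]^\rho E(\vartheta)\,\Pi(d\vartheta).
\]
Substitute $\vartheta=\Phi_{j,S}(\theta)$ with $\theta\in B$. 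By definition \eqref{eq:transported-prior}, the pushforward of $\Pi^{\Phi_{j,S}}$ under $\Phi_{j,S}$ is $\Pi$ restricted appropriately. Concretely, for a nonnegative measurable $g$,
\[
\int_{\Phi_{j,S}(B)} g\,d\Pi=\int_B g\circ\Phi_{j,S}\,d\Pi^{\Phi_{j,S}}=\int_B g(\Phi_{j,S}(\theta))\,L_{\Phi_{j,S}}(\theta)\,\Pi(d\theta).
\]
I would verify this first on indicators $g=\mathbbm 1_{\Phi_{j,S}(A)}$, using the bijectivity and bimeasurability of $\Phi_{j,S}$ (its inverse is $\Phi_{j,S^{-1}}$). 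The general case then follows by the standard simple-function and monotone convergence extension. This is exactly the mechanism behind Proposition~\ref{thm:non-id-general}, and I would reuse that argument verbatim.

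Next I evaluate the integrand at $\Phi_{j,S}(\theta)$. By Proposition~\ref{prop:ope}, $\C(\Phi_{j,S}(\theta))=\C(\theta)$, so the likelihood factor is unchanged. Only the $j$th and $(j+1)$st layers change, and $R_J\equiv1$. Hence by \eqref{eq:Rj},
\[
E(\Phi_{j,S}(\theta))=E(\theta)\exp\{-\Delta_{j,S,n}(\theta)\},
\]
where $\Delta_{j,S,n}$ is precisely as in \eqref{eq:zero energy}, including the case $j=J-1$ in which only layer $J-1$ contributes. Putting these together gives
\[
Z^{-1}\int_B L_{\Phi_{j,S}}(\theta)\,e^{-\Delta_{j,S,n}(\theta)}\,[p(\D_n\mid\C(\theta))]^\rho E(\theta)\,\Pi(d\theta),
\]
which is \eqref{eq:aug post}.

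For the ratio bounds, bound the integrand on $B$. Since $\tilde\Pi_\rho(\cdot\mid\D_n)\ll\Pi$, the almost-everywhere bounds $m\le L_{\Phi_{j,S}}\le M$ hold $\tilde\Pi_\rho$-almost everywhere on $B$. Also
\[
e^{-\sup_B\Delta_{j,S,n}}\le e^{-\Delta_{j,S,n}(\theta)}\le e^{-\inf_B\Delta_{j,S,n}}.
\]
Integrating over $B$ and dividing by $\tilde\Pi_\rho(B\mid\D_n)>0$ yields the two-sided bound. The limits follow immediately because $m$ and $M$ are fixed while the exponential factors tend to $0$ or $+\infty$.

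The main obstacle is a measure-theoretic one, namely making the change of variables rigorous. I need the measurability of $\Phi_{j,S}(B)$, which comes from bimeasurability of $\Phi_{j,S}$. I also need $\Delta_{j,S,n}$ to be measurable and finite, which holds because $S\circ f_j$ and $f_{j+1}\circ S^{-1}$ stay in $\Theta$, so the relevant norms are finite. One further point needs care: the penalty terms are all finite, so $0<E<\infty$ and $Z$ is positive and finite. For this I would note that $E\le1$ and that the likelihood is bounded, so $Z<\infty$, while $Z>0$ because $E>0$ everywhere.
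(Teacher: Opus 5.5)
Your proposal is correct and follows essentially the same route as the paper. Both arguments use the change of variables $\vartheta=\Phi_{j,S}(\theta)$ through the transported prior $\Pi^{\Phi_{j,S}}$, the invariance of the likelihood from Proposition~\ref{prop:ope}, the factorization $E(\Phi_{j,S}(\theta))=E(\theta)e^{-\Delta_{j,S,n}(\theta)}$, and then a bound on the integrand over $B$. Your extra checks are details the paper leaves implicit: verifying the change of variables on indicators, transferring the $\Pi$-a.e.\ bounds on $L_{\Phi_{j,S}}$ to $\tilde\Pi_\rho$-a.e.\ bounds via $\tilde\Pi_\rho\ll\Pi$, and confirming that the normalizer is positive and finite.
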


Theorem~\ref{thm:noniden break} gives the exact symmetry-breaking mechanism through the product of the prior Radon-Nikod\'ym factor and the exponential penalty factor. Under the bounded Radon-Nikod\'{y}m derivative condition in the theorem, the penalty-energy difference $\Delta_{j,S,n}$ dictates the ratio between posterior mass of set $\Phi_{j,S}(B)$ and $B$. If $j\le J-2$ and $\eta_{j,n}\ll \eta_{j+1,n}$, then the difference $\Delta_{j,S,n}$ goes to either zero or infinity, except the very special case that 
\begin{equation}\label{eq:delta zero}
\|S\circ f_j-\id\|_{\ell^p(\tilde\mu)}-\|f_j-\id\|_{\ell^p(\tilde\mu)} = 0,
\end{equation}
which is very unlikely to happen unless $f_j$ is deliberately engineered. Below, assuming $\eta_{j+1,n}\ll \eta_{j,n}$, we continue with the previously discussed
examples of the bijection $S$, demonstrating how the adjacent layer symmetry gets removed with
the introduction of augmentation.

\begin{example}[Sign flip, continued]
    Continuing Example~\ref{eg:sign flip 2}. Having equation \eqref{eq:delta zero} is equivalent to 
    \begin{align*}
    \|f_j-\id\|_{\ell^p(\tilde\mu)}=\|f_j+\id\|_{\ell^p(\tilde\mu)}.
\end{align*}
Unless $f_j$ has special symmetries, this equality generally does not hold. 
\end{example}

\begin{example}[Affine maps, continued]
Continuing Example~\ref{example:affine}, for $S(z)=rz+a$, having equation \eqref{eq:delta zero} is equivalent to 
\[
\|f_j-\id\|_{\ell^p(\tilde\mu)}
=\|r f_j+a-\id\|_{\ell^p(\tilde\mu)}.
\]
This is again a restricted condition and need not hold for a generic layer map.
\end{example}

\section{Function-level Contraction and Layerwise Parsimony}\label{sec:theory}\label{sec:contraction}

\subsection{General Theory}
The augmentation in Definition~\ref{def:augmented_fractional_posterior} not only breaks the posterior symmetry described by Theorem \ref{thm:noniden break}, but also directly applies a strong penalty to redundant layers. Moreover, provided the penalty parameters $\eta_{j,n},1\le j\le J-1$ are properly chosen, one can reach a ``sweet point'', at which redundant layers are pulled towards near-identity mappings while the penalties on the active block remain weak enough to preserve the prior mass needed for function-level contraction. The objective of this section is to prove the general theorems for such ``sweet point'' phenomenon, as well as illustrating with constructed examples. All the contraction results will be stated in the augmented fractional posterior $\tilde\Pi_\rho(\cdot|\D_n)$.

We write $\mathbb{P}$ for probability under the Gaussian process law being discussed below.  The first assumption concerns the a layerwise small ball probability of the GP priors.

\begin{assumption}[Layerwise regularity]\label{assump:layer regular}
    There exists $\alpha>0$ such that, for every layer coordinate $Z_{j,k}$ and every sufficiently small $\epsilon>0$, the centered small ball probability on the enlarged compact set satisfies
    \begin{align*}
    \ln \mathbb{P}(\|Z_{j,k}\|_{\ell^\infty(\tilde\mu)} <\epsilon) \gtrsim -\epsilon^{-d/\alpha}.
\end{align*}
There also exists $c>0$, such that $\forall 1\le j<J, \; \forall 1\le k \le d$, there exists $\mr{id}_k:\mathbb{R}^d\to \mathbb{R}$, such that $\mr{id}_k(x[1],x[2],\cdots, x[d]) = x[k], \;\forall x\in \tilde\X$ and $\|\mr{id}_k\|_{\mc H_j}\le c$.
\end{assumption}

Assumption \ref{assump:layer regular} implies, by an adaptation of the Cameron-Martin shift inequality as in \eqref{eq:multivaraite small ball}, for all $j<J$, 
\begin{align*}
    \ln \Pi(\|f_j-\id\|_{\ell^\infty(\tilde\mu)}<\epsilon)
    \ge \ln \prod_{k=1}^d  [\mathbb{P}(\|Z_{j,k}\|_\infty <\epsilon)     
    \exp(-c^2/2)] \gtrsim -\epsilon^{-d/\alpha}.
\end{align*}
with details provided in Appendix \ref{sec:appen small ball}.

The remaining assumptions are stated in terms of two positive sequences $\eps_n\to0$ and $\tilde\eps_n\to0$, with $n\eps_n^2\to\infty$.  The quantity $\eps_n$ is the target function-level contraction rate, while $\tilde\eps_n$ is a layerwise localization scale used in the prior-mass construction.  Depending on the approximation, $\tilde\eps_n$ may be smaller than, or of the same order as, $\eps_n$.

We also introduce a proxy $f_n^*$ for the true regression function.  The proxy is $\epsilon_n$-close to $f^*$ in $\ell^2(\mu_X)$ but satisfies a mild local regularity condition on the enlarged domain $\tilde{X}$.

\begin{assumption}[Proxy regularity]\label{assump:proxy regular}
    Assume $\|f^*\|_{\ell^\infty(\X)} \le 1$. For $0<\tilde{\eps}_n\le \epsilon_n<1$ satisfying $\tilde{\epsilon}_n^{-d/\alpha} \lesssim n\epsilon_n^2$, there exists a proxy $ f^*_n$ with uniformly bounded $\|f_n^*\|_{\ell^\infty(\tilde\mu)}$ such that $\|f^*- f^*_n\|_{\ell^2(\mu_X)} \le \eps_n$ and
\begin{equation}\label{eq:proxy lipschitz}
\sup_{x,x'\in\tilde\X:\,\|x-x'\|_\infty< \tilde{\epsilon}_n} | f^*_n(x) -  f^*_n(x')| \le \epsilon_n.
\end{equation}
\end{assumption}

Assumption \ref{assump:proxy regular} puts some minimal regularity on the data-generating function, established through 
the two versions of epsilon in the assumption statement, $\tilde\epsilon_n$ and $\epsilon_n$. The $\tilde\epsilon_n$ roughly depicts the prior mass condition on the redundant layers. Under the assumption that identity mapping having bounded RKHS norms, this is solely dictated by the layerwise GP priors that satisfy Assumption \ref{assump:layer regular}. The $\epsilon_n$ governs the posterior concentration of the whole DGP model, and is affected by both the layerwise GP priors and the data-generating function.

These two versions of epsilon are linked together in two ways, 
First, the inequality $\tilde{\epsilon}_n^{-d/\alpha} \lesssim n\epsilon_n^2$, together with Assumption \ref{assump:layer regular}, implies that 
$$\ln \Pi(\|f_j-\id\|_{\ell^\infty(\tilde\mu)}<\tilde\epsilon_n)
    \gtrsim -n\epsilon_n^2.$$
In other words, for the redundant layers, the prior has sufficient mass on some $\tilde\epsilon$-neighbor of the identity mapping, with the sufficiency measured by the posterior contraction rate of the whole DGP model (rather than the individual redundant layers).

Second and also the most importantly, there exists a proxy $f_n^*$ sufficiently close to the data-generating function $f^*$, such that $f_n^*$ partially preserves the local geometry. More precisely, equation \eqref{eq:proxy lipschitz} claims that, a radius $\tilde\epsilon_n$ ball, if pushforwarded by the proxy $f_n^*$, is contained in a radius $\epsilon_n$ ball in the image space. Provided the proxy $f_n^*$ can be estimated sufficiently well, Assumption \ref{assump:layer regular} allows one to convert a prior mass condition on the redundant layers to a prior mass condition around the proxy $f_n^*$, and hence the data-generating function $f^*$. This is crucial in proving the (fractional) posterior contraction rates.

While equation \eqref{eq:proxy lipschitz} looks like a Lipschitz condition, if $\tilde{\epsilon}_n$ is much smaller than $\epsilon_n$, the assumption is actually much weaker than Lipschitz. In fact, as will be demonstrated in Section \ref{sec:apply theory}, the proxy regularity condition is capable of accommodating non-differentiable and even discontinuous functions, which the conventional nonparametric theory often avoids.

The third assumption describes how close the proxy data-generating function can be approximated by sample paths of a subset of deep GP layers, in the form of the classical prior mass condition. 
As long as the Assumption \ref{assump:proxy regular} is also satisfied, the prior mass around $f_n^*$ can be translated into the prior mass around the true data-generating function $f^*$ under suitable norms. Such prior mass condition is mandatory for proving either the standard posterior contraction or the fractional posterior contraction.
\begin{assumption}[Prior Mass]\label{assump:prior mass}
    There exist $1\le J_0\le J$ and $\epsilon_n\gtrsim n^{-\frac{\alpha}{2\alpha+d}}$, such that
    \begin{align*}
    \ln \Pi\big(\|f_J\circ \cdots \circ f_{J_0} - f^*_n\|_{\ell^\infty(\tilde\mu)}\le\epsilon_n,\; \eta_{j,n}\|f_j-\id\|_{\ell^p(\tilde{\mu})}\le  n\epsilon_n^2,\; J_0\le j\le J-1\big) \gtrsim -n\epsilon_n^2.
\end{align*}
\end{assumption}

Assumption \ref{assump:prior mass} separates the $J$ layers into an active block $J_0\le j\le J$, which supplies prior mass near the proxy, and a redundant block $1\le j<J_0$, which the augmentation should shrink toward identity. 
In comparison with the conventional prior mass condition, Assumption \ref{assump:prior mass} additionally requires $\eta_{j,n}\|f_j-\id\|_{\ell^p(\tilde{\mu})} \lesssim n\epsilon_n^2$ for the penalized active layers $J_0\le j\le J-1$, so that the augmentation terms do not negatively affect the prior mass around the proxy $f_n^*$. In many cases, we have norms of $f_j$ bounded by a constant, and the condition regarding $\eta_{j,n}$ simply translates to 
$$ \eta_{j,n}\lesssim n\epsilon_n^2, \qquad J_0\le j\le J-1.$$

Before the statement of posterior contraction results, we also need to introduce the R\'{e}nyi divergence, which is the natural divergence used for fractional posterior \citep{bhattacharya2019bayesian}. Let $\mathbb{P}_{f^*}$ be the probability distribution of a sample $(X_i, Y_i)$, which is the product of marginal distribution of $X_i$
and the conditional distribution of $Y_i|X_i$
, with its density calculates as
\begin{equation}\label{eq:pf}
	p_{f^*}(x,y) := \frac{d \mathbb{P}_{f^*}(x,y)}{dy d\mu_X(x)} = \frac{1}{\sqrt{2\pi}\sigma} e^{-\frac{[y-f^*(x)]^2}{2\sigma^2}}.
\end{equation}
Further define the product measures $d\nu_n = (dy d\mu_X(x))^{\otimes n}$ and $d\mathbb{P}^n_{f^*} = (d\mathbb{P}_{f^*})^{\otimes n} $, and let $p_{f^*}^n = d\mathbb{P}_{f^*}^n/d\nu_n$ be the product density.

\begin{definition}
	\label{def:renyi}
	For $f, f^*\in \ell^\infty(X)$, for $0<\rho<1$, define its $\rho$-R\'{e}nyi divergence as
\begin{equation}\label{eq:renyi}
	D_\rho(p_{f}^n,p_{f^*}^n):=\frac{1}{\rho-1} \ln 
	\int (p_{f}^n)^\rho (p_{f^*}^n)^{1-\rho} d\nu_n= \frac{n}{\rho-1}  \ln \int_{\X} e^{-\frac{\rho(1-\rho)}{2\sigma^2} [f(x)-f^*(x)]^2} d\mu_X(x). 	 
\end{equation}
\end{definition}

We also introduce a condition regarding the boundedness of the composed regression function, which can convert R\'{e}nyi divergence to $\ell^2$ norm.

\begin{assumption}[Bounded Output, Optional]\label{assump:bounded}
The sample path of the final layer is bounded, i.e., there exists an absolute constant $c>0$, such that $\Pi(\|f_J\|_\infty<c)=1$.
\end{assumption}

We are now ready to present the first main theorem, which shows the contraction rate achieved by a subset of deep GP layers is preserved after applying the augmentation in equation \eqref{eq:augmented-posterior}, provided the $\eta_{j,n}$ are chosen to be relatively small for that subset of layers.

\begin{theorem}[Contraction preservation]\label{thm:contraction}
    Fix $0<\rho<1$ and assume $\mu_X, \tilde{\mu}$ have upper bounded density with respect to the Lebesgue measure. Under Assumptions \ref{assump:layer regular}-\ref{assump:prior mass}, if
    $\eta_{j,n}\lesssim n\epsilon_n^2/\tilde\epsilon_n$ for all $j<J_0$, then for sufficiently large $M>0$,
    \begin{align*}
    \tilde{\Pi}_\rho\!\left(
    D_\rho\!\left(p_{\mathcal C(\theta)}^n, p_{f^*}^n\right)>M n\epsilon_n^2
    \,\middle|\,\D_n\right)
    \overset{\mathbb P_{f^*}^n}{\longrightarrow}0 .
    \end{align*}
    If, in addition, Assumption~\ref{assump:bounded} holds, then for sufficiently large $M>0$,
    \begin{align*}
    \tilde{\Pi}_\rho\big(\|\mathcal{C}(\theta)- f^*\|_{\ell^2(\mu_X)}> M \epsilon_n\,\big|\,\D_n\big)
    \overset{\mathbb P_{f^*}^n}{\longrightarrow}0 .
    \end{align*}
\end{theorem}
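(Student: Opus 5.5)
The plan is to reduce the statement to the standard fractional-posterior contraction theorem of \citet{bhattacharya2019bayesian}, applied to the augmented prior $\tilde\Pi$. The augmented posterior \eqref{eq:augmented-posterior} is exactly the $\rho$-fractional posterior built from the normalized prior $\tilde\Pi(d\theta)\propto\prod_{j<J}R_j(f_j)\Pi(d\theta)$.

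That theorem states the following. Suppose the prior assigns mass at least $e^{-Cn\eps_n^2}$ to the Kullback--Leibler-type neighbourhood
\[
B_n=\Big\{\theta:\ \mathrm{KL}(p^n_{f^*},p^n_{\C(\theta)})\le n\eps_n^2,\ V(p^n_{f^*},p^n_{\C(\theta)})\le n\eps_n^2\Big\}.
\]
Then $\tilde\Pi_\rho\big(D_\rho(p^n_{\C(\theta)},p^n_{f^*})>Mn\eps_n^2\mid\D_n\big)\to0$ in $\mathbb P^n_{f^*}$-probability for large $M$. No sieve or entropy condition is needed, which is why $\rho<1$ is fixed.

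For the Gaussian model, both the KL divergence and the variance term are bounded by a constant times $n\|\C(\theta)-f^*\|^2_{\ell^2(\mu_X)}$. So it suffices to show
\[
\ln\tilde\Pi\big(\|\C(\theta)-f^*\|_{\ell^2(\mu_X)}\le c\eps_n\big)\gtrsim -n\eps_n^2 .
\]

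Next I handle the normalizing constant. Since $R_j\le1$, we have $Z:=\int\prod_jR_j\,d\Pi\le1$. Hence for any set $A$,
\[
\tilde\Pi(A)\ge\int_A\prod_{j<J}R_j(f_j)\,\Pi(d\theta).
\]
Now I construct the set $A$ as a product set, using independence of the layers under $\Pi$:
\[
A=\Big\{\|f_j-\id\|_{\ell^\infty(\tilde\mu)}<\tilde\eps_n,\ j<J_0\Big\}\times A_0,
\]
where $A_0$ is the event in Assumption~\ref{assump:prior mass} for layers $J_0,\dots,J$.

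The prior mass of the first factor is controlled by Assumption~\ref{assump:layer regular} together with the Cameron--Martin shift. This gives a log-mass $\gtrsim -(J_0-1)\tilde\eps_n^{-d/\alpha}\gtrsim -n\eps_n^2$, using $\tilde\eps_n^{-d/\alpha}\lesssim n\eps_n^2$ from Assumption~\ref{assump:proxy regular}. The mass of $A_0$ is $\gtrsim e^{-Cn\eps_n^2}$ by Assumption~\ref{assump:prior mass}.

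On $A$ the penalty factors are controlled as follows:
\begin{itemize}
\item For $j<J_0$, $\|f_j-\id\|_{\ell^p}\le\|f_j-\id\|_{\ell^\infty}<\tilde\eps_n$, because $\tilde\mu$ is a probability measure. So $\eta_{j,n}\|f_j-\id\|\lesssim (n\eps_n^2/\tilde\eps_n)\tilde\eps_n=n\eps_n^2$.
\item For $J_0\le j<J$, the penalty is at most $n\eps_n^2$ by the definition of $A_0$.
\end{itemize}
Hence $\prod_jR_j\ge e^{-Cn\eps_n^2}$ on $A$, and $\ln\tilde\Pi(A)\gtrsim-n\eps_n^2$.

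It remains to show that $A\subset\{\|\C(\theta)-f^*\|_{\ell^2(\mu_X)}\le c\eps_n\}$. Write $g=f_{J_0-1}\circ\cdots\circ f_1$ and $h=f_J\circ\cdots\circ f_{J_0}$. I would argue that $g(x)$ is within $(J_0-1)\tilde\eps_n$ of $x$ in sup norm, by induction, since each redundant layer is $\tilde\eps_n$-close to the identity. Then
\[
|h(g(x))-f^*_n(g(x))|\le\eps_n,\qquad |f^*_n(g(x))-f^*_n(x)|\lesssim\eps_n .
\]
The second bound follows from \eqref{eq:proxy lipschitz}, applied in at most $J_0-1$ chained steps of size $<\tilde\eps_n$ (or with $\tilde\eps_n$ rescaled by a constant). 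Finally, $\|f^*_n-f^*\|_{\ell^2(\mu_X)}\le\eps_n$ by Assumption~\ref{assump:proxy regular}. The triangle inequality then gives $\|\C(\theta)-f^*\|_{\ell^2(\mu_X)}\lesssim\eps_n$.

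The main obstacle is a domain issue. The sup norms of the layers are only controlled on $\tilde\X$, so I need $g(x)$ to remain in $\tilde\X$ for all $x\in\X$, so that the bounds on $h$ and $f_n^*$ apply at the point $g(x)$. This holds for large $n$ because $\X$ lies in the interior of $\tilde\X$ and $(J_0-1)\tilde\eps_n\to0$. The same fact is needed at each intermediate step of the induction. The bounded densities of $\mu_X$ and $\tilde\mu$ are used to pass between $\ell^\infty(\tilde\mu)$ and pointwise statements on $\X$, since $\operatorname{supp}\tilde\mu=\tilde\X$ and the functions are essentially continuous sample paths.

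For the second claim, Assumption~\ref{assump:bounded} gives $|\C(\theta)-f^*|\le c+1$. The elementary inequality $1-e^{-t}\ge c't$ for $t$ bounded then yields
\[
D_\rho(p^n_{\C(\theta)},p^n_{f^*})\ge -\frac{n}{\rho-1}\,c''\|\C(\theta)-f^*\|^2_{\ell^2(\mu_X)} .
\]
So the first claim transfers to the $\ell^2(\mu_X)$ statement, after enlarging $M$.
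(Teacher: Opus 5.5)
Your proposal is correct and is essentially the paper's proof. It uses the same product set (redundant layers uniformly close to $\id$ on $\tilde\X$, times the active-block event of Assumption~\ref{assump:prior mass}), the same normalizer-at-most-one bound with $\eta_{j,n}\tilde\epsilon_n\lesssim n\epsilon_n^2$ to control the penalties, the same enlarged-domain composition and proxy-regularity argument placing $A$ in an $O(\epsilon_n)$ $\ell^2(\mu_X)$-ball around $f^*$, and then Theorem~3.1 of \citet{bhattacharya2019bayesian} followed by the R\'enyi-to-$\ell^2$ conversion; your per-layer radius $\tilde\epsilon_n$ with chaining along $G_0(x),\ldots,G_{J_0-1}(x)$ is equivalent to the paper's radius $\tilde\epsilon_n/J_0$ with a single use of \eqref{eq:proxy lipschitz}. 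Two small points need tightening: the log-likelihood variance is bounded by a multiple of $n\|\C(\theta)-f^*\|_{\ell^2(\mu_X)}^2$ only under a uniform bound on $\|\C(\theta)-f^*\|_{\ell^\infty(\X)}$ (Lemma~\ref{lem:kl-lp}), so an $\ell^2$-ball alone does not suffice, although your pointwise estimate on $A$ together with the boundedness of $f_n^*$ and $f^*$ supplies that bound (the paper states this explicitly); and for vector-valued layers one only has $\|g\|_{\ell^p(\tilde\mu)}\le d^{1/p}\|g\|_{\ell^\infty(\tilde\mu)}$, which changes nothing beyond constants.
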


The first conclusion is the direct fractional posterior statement and does not require an almost sure deterministic bound on a Mat\'{e}rn sample path. The second follows from the R\'{e}nyi-to-$\ell^2(\mu_X)$ conversion under the bounded output assumption.

The second theorem shows that if the active blocks $f_{J_0},\ldots,f_J$ are sufficient to guarantee the prior mass coverage in Assumption \ref{assump:prior mass}, then choosing the penalties $\eta_{j,n}$ sufficiently large for the earlier layers $j<J_0$ forces those redundant layers towards the identity map.
\begin{theorem}[Layerwise parsimony]\label{thm:parsimonious}
    Fix $0<\rho<1$ and assume $\mu_X, \tilde{\mu}$ have upper bounded density with respect to the Lebesgue measure. Under Assumptions \ref{assump:layer regular} to \ref{assump:prior mass}, if
    $n \epsilon_n^2 \lesssim \eta_{j,n} \lesssim n\epsilon_n^2/\tilde\epsilon_n$ for all $j<J_0$, then for every $1\le j<J_0$ and sufficiently large $M>0$,
    \begin{align*}
    \tilde{\Pi}_\rho(\|f_j-\id\|_{\ell^p(\tilde{\mu})}>M n\epsilon_n^2/\eta_{j,n}|\D_n) \overset{\mathbb P_{f^*}^n}{\longrightarrow} 0.
\end{align*}
\end{theorem}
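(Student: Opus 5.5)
The argument is the standard "evidence lower bound plus numerator upper bound" for fractional posteriors, as in \citet{bhattacharya2019bayesian}. Here the extra penalty factor $\prod_j R_j(f_j)$ is treated as part of the prior.

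Write the augmented posterior mass of
\[
A_n=\{\theta:\|f_j-\id\|_{\ell^p(\tilde\mu)}>Mn\epsilon_n^2/\eta_{j,n}\}
\]
as $N_n/D_n$, where
\[
N_n=\int_{A_n}\Big(\tfrac{p^n_{\C(\theta)}}{p^n_{f^*}}\Big)^\rho\prod_{i<J}R_i(f_i)\,\Pi(d\theta),
\]
and $D_n$ is the same integral over $\Theta$.

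\textbf{Lower bound on $D_n$.} I would reuse the prior-mass construction from the proof of Theorem~\ref{thm:contraction}. Restrict to the set $G_n$ on which:
\begin{itemize}
\item the redundant layers satisfy $\|f_i-\id\|_{\ell^\infty(\tilde\mu)}<\tilde\epsilon_n$ for $i<J_0$;
\item the active block satisfies the event in Assumption~\ref{assump:prior mass}.
\end{itemize}
By independence across layers and the consequence of Assumption~\ref{assump:layer regular} (together with $\tilde\epsilon_n^{-d/\alpha}\lesssim n\epsilon_n^2$), we get $\Pi(G_n)\ge e^{-Cn\epsilon_n^2}$. On $G_n$ the penalties are controlled as follows:
\begin{itemize}
\item for $i<J_0$, $\eta_{i,n}\|f_i-\id\|_{\ell^p}\le \eta_{i,n}\tilde\epsilon_n\lesssim n\epsilon_n^2$, using the upper bound $\eta_{i,n}\lesssim n\epsilon_n^2/\tilde\epsilon_n$;
\item for active $i$, the penalty is at most $n\epsilon_n^2$ by assumption.
\end{itemize}
Hence $\prod R_i\ge e^{-C'n\epsilon_n^2}$ on $G_n$.

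The redundant layers lie within $\tilde\epsilon_n$ of identity in sup norm, so their composition $g$ maps $\X$ into a $(J_0-1)\tilde\epsilon_n$-neighbourhood of the identity. This needs the active layers' inputs to stay in $\tilde\X$, which holds for large $n$ since $\X$ lies in the interior of $\tilde\X$; I would also need $f_{J_0}$ etc.\ evaluated near $x$. By \eqref{eq:proxy lipschitz} (applied a constant number of times, or on a slightly rescaled $\tilde\epsilon_n$), $|f^*_n(g(x))-f^*_n(x)|\lesssim\epsilon_n$. The active-block sup-norm approximation then gives $\|\C(\theta)-f^*\|_{\ell^2(\mu_X)}\lesssim\epsilon_n$.

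One point is delicate: the active composition is close to $f^*_n$ in sup norm on $\tilde\X$, so evaluating at $g(x)$ is legitimate. Bounded Gaussian log-likelihood ratios then give KL-type neighbourhood control. The standard Chebyshev argument of \citet{bhattacharya2019bayesian} yields $D_n\ge e^{-C''n\epsilon_n^2}$ with $\mathbb P^n_{f^*}$-probability tending to one.

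\textbf{Upper bound on $N_n$.} Bound the penalty on $A_n$ by $R_j(f_j)\le e^{-Mn\epsilon_n^2}$ and all other $R_i\le1$. Since $\mathbb E_{f^*}(p_f^n/p_{f^*}^n)^\rho\le1$ by Hölder (indeed it equals $e^{-(1-\rho)D_\rho}\le1$), Fubini gives
\[
\mathbb E_{f^*}N_n\le e^{-Mn\epsilon_n^2}\,\Pi(A_n)\le e^{-Mn\epsilon_n^2}.
\]
Markov's inequality then gives $N_n\le e^{-(M-1)n\epsilon_n^2}$ with probability tending to one, using $n\epsilon_n^2\to\infty$. Taking $M>C''+2$ makes $N_n/D_n\to0$ in probability.

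The lower bound $n\epsilon_n^2\lesssim\eta_{j,n}$ only ensures the radius $Mn\epsilon_n^2/\eta_{j,n}$ is $O(1)$, i.e.\ the statement is nontrivial; it is not used in the argument.

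\textbf{Main obstacle.} I expect the evidence lower bound to be the hardest step, specifically propagating the $\tilde\epsilon_n$-perturbation of the redundant layers through the active block. The proxy regularity controls only $f^*_n$, not the random active layers. So I would first use the triangle inequality
\[
|\C(\theta)(x)-f^*_n(x)|\le|f_{J}\circ\cdots\circ f_{J_0}(g(x))-f^*_n(g(x))|+|f^*_n(g(x))-f^*_n(x)|.
\]
The first term is controlled by the sup-norm condition in Assumption~\ref{assump:prior mass} over $\tilde\X$; this is why $g(x)\in\tilde\X$ is needed. The second term is controlled by \eqref{eq:proxy lipschitz}, with the constant $J_0-1$ absorbed by splitting into steps of size $\tilde\epsilon_n$.
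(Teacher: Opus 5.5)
Your proof is correct and follows essentially the same route as the paper. Both arguments combine two bounds:
\begin{itemize}
\item an evidence lower bound built from the prior-mass set in the proof of Theorem~\ref{thm:contraction};
\item a numerator bound via Fubini, using $\mathbb E_{f^*}(p^n_f/p^n_{f^*})^\rho\le 1$ and $R_j(f_j)\le e^{-Mn\epsilon_n^2}$ on the tail event.
\end{itemize}
The only difference is bookkeeping. You keep the penalty unnormalized inside both integrals, so you do not need the paper's separate lower bound on the $j$th marginal normalizer $\Xi_{j,n}$; the same condition $\eta_{j,n}\tilde\epsilon_n\lesssim n\epsilon_n^2$ enters through your denominator bound instead.
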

Since $d\mu_X/d\tilde\mu$ is essentially bounded, the bound in $\ell^p(\tilde{\mu})$ also holds, up to a fixed multiplicative constant, in $\ell^p(\mu_X)$. The displayed norm bound tends to zero only under the stronger scaling $\eta_{j,n}/(n\epsilon_n^2)\to\infty$. Theorems~\ref{thm:contraction} and~\ref{thm:parsimonious} jointly provide a rigorous framework for studying the layerwise behavior of DGPs, which classifies the DGP layers into {\it active blocks} (layers with $j\ge J_0$) and {\it inactive blocks} (layers with $j<J_0$).
With properly chosen penalty parameters $\eta_{j,n}$, augmented DGP can preserve contraction of the active block while forcing the redundant block toward identity.
The main tuning window for redundant layers is
\begin{equation}\label{eq:eta-window-intuition}
    n\eps_n^2
    \lesssim
    \eta_{j,n}
    \lesssim
    \frac{n\eps_n^2}{\tilde\eps_n},
    \qquad j<J_0.
\end{equation}
The lower bound keeps the posterior radius in Theorem~\ref{thm:parsimonious} bounded, while genuine contraction to identity requires the stronger condition $\eta_{j,n}\gg n\epsilon_n^2$; the upper bound prevents the penalty from destroying the prior mass needed for function-level contraction. It is worth noting that the function-level contraction requires the layerwise prior mass in $\ell^\infty$ norm, whereas the augmentation penalizes the $\ell^p$ norm. Due to the norm differences, the prior mass on redundant layers are still driven by the GP priors, which is the reason we need an upper bound on the penalty parameters for $j<J_0$.

As for the active layers, the augmentation term is a liability, and one imposes the following upper bound
\begin{equation}\label{eq:eta range}
    \eta_{j,n}\|f_j-\id\|_{\ell^p(\tilde{\mu})}\lesssim n\epsilon_n^2, \;J_0\le j\le J-1,
\end{equation}
so that the prior mass dominates the augmentation and the contraction rate at the function level is preserved. The rate is specified in Assumption \ref{assump:prior mass}, which cannot be faster than $n^{-\frac{\alpha}{2\alpha+d}}$ due to Assumption \ref{assump:proxy regular}.
The intuition is that, for  
a complex functions $f^*$ that potentially belongs to some huge, irregular function families,  if more than one DGP layers are needed to estimate $f^*$, the (fractional) posterior contraction rate shall be no faster than the rate of estimating an H\"{o}lder-$\alpha$ function with a single GP.

\subsection{Applications of the General Theory}
\label{sec:apply theory}
We now instantiate Theorems~\ref{thm:contraction} and~\ref{thm:parsimonious}, illustrating how the tuning windows of \eqref{eq:eta-window-intuition} and \eqref{eq:eta range} are used. The examples are organized into three paragraphs based on the structures of the data-generating functions. Except the correctly specified H\"{o}lder smooth function (Corollary \ref{eg:Holder correct}, and to some extent Corollary \ref{eg:non-diff}), for all other examples, the Bayesian estimation of the data-generating function benefits from the flexibility of DGPs, in the sense that (augmented) DGP produces faster (fractional) posterior contraction rates versus single GPs, even though the structure is not explicitly coded in the DGP priors. With properly chosen penalty parameters $\eta_{j,n}$, the augmented DGP also regularize redundant layers to some neighborhoods of identity mappings.

There are limited theoretical investigation regarding the advantages of DGPs versus single GPs on specific data-generating functions, with the existing results mostly concentrated on compositional smooth functions similar to Corollary \ref{cor:intrinsic} \citep{castillo2025deep}. To the best of our knowledge, the theoretical study of DGPs on functions with local structures like Corollary \ref{eg:non-diff} and \ref{cor:discont}, despite intuitively appealing, is not rigorously conducted before. Such study is of its independent interests, extending beyond our augmentation methodology.

We assume $\rho\in(0,1)$ and $\mu_X, \tilde{\mu}$ have upper bounded density with respect to the Lebesgue measure in all the cases discussed below. The posterior contraction rates are in R\'{e}nyi divergence, and if Assumption \ref{assump:bounded} holds, in $\ell^2({\mu_X})$ norm.
The proofs for all the corollaries in this section can be found in Appendix~\ref{sec:proof corollaries}.

\paragraph{H\"{o}lder Smooth Adaptivity.}
Denote the H\"{o}lder $\beta$-smooth function family with domain $\X$ and H\"{o}lder norm $1$ as $C^{\beta}(\X)$, which is one of the most common objects of studies in nonparametric statistics. When estimating a H\"{o}lder-$\beta$ smooth function using a Mat\'{e}rn process with regularity parameter $\alpha$,  the Mat\'{e}rn process is called ``correctly specified'' if $\alpha=\beta$, and ``oversmooth'' or ``undersmooth'' if $\alpha>\beta$ or $\alpha<\beta$, respectively. In the following, we illustrate the oversmoothed cases, where the $\eta_{j,n}$ are adjusted such that two GP layers remain active. The last but one GP layer acts as a rescaling to the input data, stretching the input from the domain $[-1,1]$ to roughly $[-n^{\frac{\alpha-\beta}{(2\beta+d)\alpha}}, n^{\frac{\alpha-\beta}{(2\beta+d)\alpha}}]$, and the last GP layer operates on the rescaled data. With properly chosen $\eta_{j,n}, 1\le j\le J$, we can make sure that the last two GP layers exercise their functionality stated above, whereas the other layers remain inactive.

\begin{corollary}[H\"{o}lder smooth, oversmooth]\label{eg:Holder over}
Let $\beta>0$ and $f^*\in C^\beta(\X)_1$. Take $\tilde\X$ to be a compact cube and assume a $C^\beta$ extension to a fixed neighborhood of $\tilde\X$, with norm bounded by a fixed constant. Consider an augmented DGP with at least two layers and independent centered, fixed-scale Mat\'ern priors for all layers, each a Mat\'{e}rn
process with regularity $\alpha$, such that $\alpha\in(\max\{\beta,1\},2\beta]$ if $d=1$ and $\alpha>\max\{\beta,1\}$ if $d\ge 2$. Choose $\eta_{J-1,n}\lesssim n^{\frac{\alpha(d-1)+\beta}{(2\beta+d)\alpha}}$ and $n^{\frac{d}{2\beta+d}} \ll \eta_{j,n} \lesssim n^{\frac{\alpha+d}{2\beta+d}} $ for all $1\le j\le J-2$, then the augmented DGP attains the posterior contraction rate $n^{-\frac{\beta}{2\beta+d}}$. Moreover, except the $J-1$ and $J$th layers, all other layers contract to the identity mapping
under the fractional augmented posterior.
\end{corollary}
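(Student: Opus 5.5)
The plan is to verify Assumptions~\ref{assump:layer regular}--\ref{assump:prior mass} for explicit sequences and then invoke Theorem~\ref{thm:contraction} and Theorem~\ref{thm:parsimonious} with active block $J_0=J-1$. I would take $\eps_n=n^{-\beta/(2\beta+d)}$, so that $n\eps_n^2=n^{d/(2\beta+d)}$.

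\textbf{Layerwise regularity.} For a fixed-scale Mat\'ern process of regularity $\alpha$ on the compact cube $\tilde\X$, the centered small-ball exponent is of order $\epsilon^{-d/\alpha}$. This is the standard entropy/RKHS computation of van der Vaart and van Zanten. Since the RKHS is a Sobolev space of order $\alpha+d/2$, the coordinate maps $\id_k$ and the constants admit smooth compactly supported extensions with finite RKHS norm. This gives Assumption~\ref{assump:layer regular}.

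\textbf{Proxy regularity.} I take $f_n^*$ to be the $C^\beta$ extension of $f^*$ itself, so that $\|f^*-f_n^*\|_{\ell^2(\mu_X)}=0$. If $\beta\ge1$, $f_n^*$ is Lipschitz, and we need $\tilde\eps_n\lesssim\eps_n$. If $\beta<1$, Hölder continuity gives $|f_n^*(x)-f_n^*(x')|\le\tilde\eps_n^{\beta}$, so we need $\tilde\eps_n\le\eps_n^{1/\min(\beta,1)}$. The constraint $\tilde\eps_n^{-d/\alpha}\lesssim n\eps_n^2$ forces $\tilde\eps_n\gtrsim n^{-\alpha/(2\beta+d)}$. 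With $\tilde\eps_n=n^{-\alpha/(2\beta+d)}$, this is compatible with the previous requirement precisely because $\alpha>\max\{\beta,1\}$. This choice gives $n\eps_n^2/\tilde\eps_n=n^{(\alpha+d)/(2\beta+d)}$, which matches the stated upper window for $j\le J-2$. The lower bound $\eta_{j,n}\gg n\eps_n^2$ then turns Theorem~\ref{thm:parsimonious} into genuine contraction of those layers to the identity.

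\textbf{Prior mass (main obstacle).} The two-layer construction is
\[
f_{J-1}\approx \lambda_n\,\id,\qquad f_J\approx g_n:=f_n^*(\cdot/\lambda_n),\qquad \lambda_n=n^{\frac{\alpha-\beta}{(2\beta+d)\alpha}}.
\]
This choice of $\lambda_n$ makes the rescaled $\beta$-Hölder target $g_n$ on the stretched cube $\lambda_n\tilde\X$ have Hölder seminorm $\lambda_n^{-\beta}$. I would then do the following.

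\begin{enumerate}
\item Lower bound the probability that $f_J$ is within $\eps_n$ of $g_n$ on $\lambda_n\tilde\X$. Use the concentration function
\[
\inf_{h:\|h-g_n\|_\infty\le\eps_n}\|h\|_{\mc H_J}^2-\ln\mathbb P(\|Z_J\|_\infty<\eps_n).
\]
The small-ball term on a domain of volume $\lambda_n^d$ scales like $\lambda_n^{d}\eps_n^{-d/\alpha}$. The approximation term is handled by convolving $g_n$ at bandwidth tied to $\lambda_n$ and bounding its Sobolev norm. I expect this to be the hardest step: balancing these two terms against $n\eps_n^2$ is what produces the exponent of $\lambda_n$, and it is where the restriction $\alpha\le2\beta$ for $d=1$ enters.
\item Control the inner layer. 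Require $\|f_{J-1}-\lambda_n\id\|_\infty\le\eps_n/\|g_n\|_{\rm Lip}$, or the analogous Hölder-modulus condition when $\beta<1$. The Cameron--Martin shift by $\lambda_n\id_k$ costs $\lambda_n^2c^2$, and one checks that this is $\lesssim n\eps_n^2$.
\item Check the penalty constraint. On this event $\|f_{J-1}-\id\|\lesssim\lambda_n$, so the requirement $\eta_{J-1,n}\lambda_n\lesssim n\eps_n^2$ gives exactly
\[
\eta_{J-1,n}\lesssim n^{\frac{d}{2\beta+d}-\frac{\alpha-\beta}{(2\beta+d)\alpha}}=n^{\frac{\alpha(d-1)+\beta}{(2\beta+d)\alpha}},
\]
which is the stated bound.
\end{enumerate}

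Independence of the layers multiplies the two probabilities, which completes Assumption~\ref{assump:prior mass}. The two theorems then give the rate $\eps_n$ and identity contraction for layers $j\le J-2$.
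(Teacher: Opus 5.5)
Your proposal is correct and follows the paper's proof essentially step for step: $f_n^*=f^*$, $\eps_n=n^{-\beta/(2\beta+d)}$, $\tilde\eps_n\asymp n^{-\alpha/(2\beta+d)}$, the two-layer construction $\lambda_n\id$ followed by $f^*(\cdot/\lambda_n)$ with the same $\lambda_n$, the rescaled Mat\'ern bound for the outer layer, a Cameron--Martin shift by compactly supported extensions of $\lambda_n\id_k$ for the inner layer, and $\eta_{J-1,n}\lambda_n\lesssim n\eps_n^2$ for the penalty. Your larger inner radius $\eps_n/\|g_n\|_{\mathrm{Lip}}$ (or its H\"older analogue) works as well as the paper's $\tilde\eps_n$, provided you control $f_J$ on a small enlargement of $\lambda_n\tilde\X$ rather than on $\lambda_n\tilde\X$ itself; the paper does this with $\X_{2,n}$.

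One correction: the restriction $\alpha\le 2\beta$ for $d=1$ plays no role in your step 1, where with this $\lambda_n$ both terms of the outer-layer bound equal $n^{d/(2\beta+d)}$ for every $\alpha>\beta$. It is precisely what makes your step-2 check $\lambda_n^2\lesssim n\eps_n^2$ true, since that inequality is equivalent to $2(\alpha-\beta)\le d\alpha$.
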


\paragraph{Local Structures.}
The augmented DGP framework may also be useful for functions with local structures that are difficult to capture from global smoothness alone.

We first study the case where the function is differentiable except at one point in the domain. Via some careful choice of the proxy function that ``smoothens'' out the non-differentiable point, it is possible to attain contraction rate faster than that of H\"{o}lder-1 smooth functions.

\begin{corollary}[Non-differentiable cusp]\label{eg:non-diff}
	Let $\X=[-1,1]$ and let
	\begin{align*}
		f^*(x) = \left\{ \begin{aligned}
			x, &\quad x\in [-1,0],\\
			-x, &\quad x\in (0,1].
		\end{aligned}\right.
	\end{align*}
	Then $f^*$ is not differentiable at $x=0$. Consider an augmented DGP where the GP prior for all layers are the Mat\'{e}rn process with regularity $\alpha\le 3/2
    $. Choose  $n^{\frac{1}{2\alpha+1}}\ll\eta_{j,n}\lesssim n^{\frac{\alpha+1}{2\alpha+1}}, \forall 1\le j\le J-1$. Then the augmented DGP contracts to $f^*$ at the rate $n^{-\frac{\alpha}{2\alpha+1}}$. Moreover, except the $J$th layers, all other layers contract to the identity mapping
	under the fractional augmented posterior.
\end{corollary}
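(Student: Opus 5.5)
The proof reduces Corollary~\ref{eg:non-diff} to Theorems~\ref{thm:contraction} and~\ref{thm:parsimonious} with $d=1$, $J_0=J$, $\epsilon_n=n^{-\alpha/(2\alpha+1)}$, so $n\epsilon_n^2=n^{1/(2\alpha+1)}$. The only active layer is the final one, which is not penalized. All layers $j<J$ are redundant. The prescribed window $n^{1/(2\alpha+1)}\ll\eta_{j,n}\lesssim n^{(\alpha+1)/(2\alpha+1)}$ then equals $n\epsilon_n^2\ll\eta_{j,n}\lesssim n\epsilon_n^2/\tilde\epsilon_n$ provided we take $\tilde\epsilon_n\asymp n^{-\alpha/(2\alpha+1)}=\epsilon_n$. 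With this choice the constraint $\tilde\epsilon_n^{-1/\alpha}=n^{1/(2\alpha+1)}\lesssim n\epsilon_n^2$ in Assumption~\ref{assump:proxy regular} holds with equality of orders. Assumption~\ref{assump:layer regular} is the standard Mat\'ern small-ball estimate $\ln\mathbb P(\|Z\|_\infty<\epsilon)\gtrsim-\epsilon^{-1/\alpha}$ on a compact $\tilde\X$. The identity-in-RKHS requirement follows by multiplying $x$ by a smooth compactly supported cutoff equal to one on $\tilde\X$; the result lies in the Sobolev space $H^{\alpha+1/2}(\mathbb R)$, which is the Mat\'ern RKHS up to norm equivalence.

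The key construction is the proxy $f_n^*$. Take $f_n^*$ equal to $f^*$ outside $[-\delta_n,\delta_n]$ and replace the cusp on that interval by a smooth cap, for instance a convolution of $-|x|$ with a mollifier of width $\delta_n$, extended smoothly to $\tilde\X$. Then $\|f^*-f_n^*\|_{\ell^2(\mu_X)}\lesssim\delta_n^{3/2}$ and $f_n^*$ is $1$-Lipschitz, so \eqref{eq:proxy lipschitz} holds with $\tilde\epsilon_n=\epsilon_n$. The derivatives obey $\|f_n^{*(k)}\|_\infty\lesssim\delta_n^{1-k}$.

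The prior mass condition (Assumption~\ref{assump:prior mass}) with $J_0=J$ only asks that $\ln\Pi(\|f_J-f_n^*\|_{\ell^\infty(\tilde\mu)}\le\epsilon_n)\gtrsim-n\epsilon_n^2$; no penalty constraint arises. I would use the usual concentration-function bound
\[
-\ln\Pi(\|f_J-h\|_\infty\le\epsilon)\le \tfrac12\inf_{g:\|g-h\|_\infty\le\epsilon/2}\|g\|_{\mc H_J}^2+c\,(\epsilon/2)^{-1/\alpha}.
\]
The second term is of order $n\epsilon_n^2$. The first term is where the restriction $\alpha\le 3/2$ enters, and I expect it to be the main obstacle.

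For that term, approximate $f_n^*$ by a Sobolev-$(\alpha+1/2)$ function, or by $f_n^*$ itself with $\delta_n$ tuned. The $H^{\alpha+1/2}$ norm squared of the smoothed cusp scales like $\delta_n^{1-2(\alpha+1/2)+\ldots}$. Concretely, the $(\alpha+1/2)$-th derivative of size $\delta_n^{1/2-\alpha}$ is supported on an interval of length $\delta_n$, giving $\|f_n^*\|_{\mc H}^2\lesssim 1+\delta_n^{2-2\alpha}$. We need this to be at most $n^{1/(2\alpha+1)}$ and also $\delta_n^{3/2}\lesssim\epsilon_n$. Choosing $\delta_n=\epsilon_n^{2/3}$ gives $\delta_n^{2-2\alpha}=n^{\frac{2\alpha(2\alpha-2)}{3(2\alpha+1)}}$. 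This must be at most $n^{1/(2\alpha+1)}$, which translates to a polynomial inequality in $\alpha$. Alternatively, sharper smoothing bounds the RKHS part via a kernel-approximation argument as in van der Vaart--van Zanten, since a piecewise linear function lies in $C^1$ with a jump in the derivative. I would verify this computation carefully to recover exactly the threshold $\alpha\le3/2$, adjusting $\delta_n$, possibly to $\epsilon_n$ with the $\ell^\infty$ closeness measured away from the cusp. For $\alpha\le 1$ the cusp is itself H\"older-$\alpha$, so the bound is standard.

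With Assumptions~\ref{assump:layer regular}--\ref{assump:prior mass} verified, Theorem~\ref{thm:contraction} yields the contraction rate $\epsilon_n$ in R\'enyi divergence. Theorem~\ref{thm:parsimonious} gives $\|f_j-\id\|_{\ell^p(\tilde\mu)}\lesssim n\epsilon_n^2/\eta_{j,n}\to0$ for all $j<J$, since $\eta_{j,n}\gg n\epsilon_n^2$.
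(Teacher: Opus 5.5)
Your proposal is correct and follows essentially the same route as the paper. Both use a $1$-Lipschitz proxy that smooths the cusp on a window of width $\delta_n$ (the paper's $1/\tau$), with $\ell^2$ error $\lesssim\delta_n^{3/2}$ and $\|\chi f_n^*\|_{\mc H}^2\lesssim 1+\ln(1/\delta_n)+\delta_n^{2-2\alpha}$ from the Fourier argument. Both take $\tilde\epsilon_n=\epsilon_n=n^{-\alpha/(2\alpha+1)}$ and then apply Theorems~\ref{thm:contraction} and~\ref{thm:parsimonious} with $J_0=J$.

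The polynomial inequality you left unsolved needs no further adjustment of $\delta_n$. It reads $4\alpha(\alpha-1)\le 3$, which factors as $(2\alpha-3)(2\alpha+1)\le 0$, so your choice $\delta_n=\epsilon_n^{2/3}$ gives exactly $\alpha\le 3/2$. The paper instead takes $\delta_n\asymp n^{-1/(2\alpha+1)}$, which reaches the same threshold, and the extra $\ln(1/\delta_n)$ term at $\alpha=1$ is harmless.
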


We proceed to consider the case where the data-generating function $f_n^*$ is discontinuous. Via the tool of the proxy $f_n^*$, it is possible to prove the posterior contraction rate (in $\ell^2$ norm) for this discontinuous function, without knowing the pattern of discontinuity. To the best of our knowledge, this is a new result for the GP (or DGP) community.

\begin{corollary}[Discontinuous step function]\label{cor:discont}
Consider a function $f^*$ defined as
\begin{align*}
    f^*(x) = \left\{ \begin{aligned}
        -1, &\quad x\in [-1,0],\\
        1, &\quad x\in (0,1].
    \end{aligned}\right.
\end{align*}
Then $f^*$ is discontinuous at $x=0$.  Consider an augmented DGP with $J$ layers. Let $J^*=\lceil4\alpha^2/3\rceil$. If $J\ge J^*$ and $\eta_{j,n} \lesssim n^{\frac{3J^*-2\alpha}{(2\alpha+3)J^*}}$ for all $j> J-J^*$, and $\eta_{j,n}\lesssim n^{\frac{3\alpha+3}{2\alpha+3}}$ for $j\le J-J^*$, then the augmented fractional posterior contract in $\ell^2(\mu_X)$ norm to $f^*$ at some rate no slower than $n^{-\frac{\alpha}{2\alpha+3}}$. In addition, if $n^{\frac{3}{2\alpha+3}} \ll \eta_{j,n} \lesssim n^{\frac{3\alpha{+3}}{2\alpha+3}}$ for all $j\le J - J^*$, then the first $J-J^*$ layers will contract around the identity mapping due to the effects of the augmentation.
\end{corollary}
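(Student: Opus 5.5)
We apply Theorems~\ref{thm:contraction} and~\ref{thm:parsimonious} with active block $J_0=J-J^*+1$, so that the last $J^*$ layers are active and the first $J-J^*$ layers are redundant. The work is in checking Assumptions~\ref{assump:layer regular}--\ref{assump:prior mass} for a suitable proxy $f_n^*$ and scales $\eps_n,\tilde\eps_n$. Assumption~\ref{assump:layer regular} is the standard Mat\'ern small-ball estimate $-\epsilon^{-1/\alpha}$ in $d=1$. The identity belongs to the restricted RKHS on the compact set $\tilde\X$, by a smooth compactly supported extension.

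\textbf{Step 1: the proxy.} We take $f_n^*$ to be a smoothed step, for instance $f_n^*(x)=\tanh(x/\delta_n)$ extended to $\tilde\X$, with transition width $\delta_n$. Then $\|f^*-f_n^*\|^2_{\ell^2(\mu_X)}\lesssim\delta_n$, so we need $\delta_n\lesssim\eps_n^2$. The modulus condition \eqref{eq:proxy lipschitz} requires $\tilde\eps_n/\delta_n\lesssim\eps_n$, i.e.\ $\tilde\eps_n\asymp\eps_n^3$. The constraint $\tilde\eps_n^{-1/\alpha}\lesssim n\eps_n^2$ then reads $\eps_n^{-3/\alpha}\lesssim n\eps_n^2$, which gives $\eps_n\gtrsim n^{-\alpha/(2\alpha+3)}$. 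We set $\eps_n=n^{-\alpha/(2\alpha+3)}$ and $\tilde\eps_n=n^{-3\alpha/(2\alpha+3)}$. With this choice, $n\eps_n^2/\tilde\eps_n=n^{(3\alpha+3)/(2\alpha+3)}$ and $n\eps_n^2=n^{3/(2\alpha+3)}$. These match the stated windows for $j\le J-J^*$, so the parsimony conclusion follows from Theorem~\ref{thm:parsimonious} once Assumption~\ref{assump:prior mass} is verified.

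\textbf{Step 2: prior mass via a compositional steepening, the main obstacle.} We must show that $J^*$ Mat\'ern layers can approximate the steep $f_n^*$, with slope $\delta_n^{-1}$, to accuracy $\eps_n$ in sup norm with log prior mass $\gtrsim-n\eps_n^2$. A single Mat\'ern-$\alpha$ layer would cost RKHS norm squared of order $\delta_n^{-(2\alpha-1)}$, which is too large. The plan is to factor $f_n^*=g_{J^*}\circ\cdots\circ g_1$ so that each $g_l$ is moderately steep. Concretely, the first $J^*-1$ intermediate layers each apply a sharpening map of local slope $\delta_n^{-1/J^*}$ around $0$, saturating away from it, for example $g_l(x)=\tanh(\delta_n^{-1/J^*}x)$ suitably rescaled to stay in a compact set. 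The final layer is a fixed smooth function. Each layer then has RKHS norm squared roughly $\delta_n^{-(2\alpha-1)/J^*}\asymp\eps_n^{-2(2\alpha-1)/J^*}$. We need this to be $\lesssim n\eps_n^2$, which holds when $J^*$ is large relative to $\alpha^2$; this is where $J^*=\lceil4\alpha^2/3\rceil$ should enter.

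Each layer is also localized to a sup-norm ball of radius $\tilde\eps_n'$. We choose $\tilde\eps_n'$ small enough that the errors, propagated through the downstream Lipschitz constants $\prod_{m>l}\delta_n^{-1/J^*}$, still sum to $\lesssim\eps_n$. The localization costs $(\tilde\eps_n')^{-1/\alpha}$ per layer and must also be $\lesssim n\eps_n^2$. We combine the two costs using the Cameron--Martin shift bound $\ln\Pi(\|f-h\|_\infty<\epsilon)\ge-\|h\|_{\mc H}^2/2+\ln\Pi(\|f\|_\infty<\epsilon)$ and independence across layers. Balancing these exponents is the delicate bookkeeping, and I would first verify that it closes at rate $n^{-\alpha/(2\alpha+3)}$. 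If it does not, I would accept a slower $\eps_n$, which is consistent with the phrase ``no slower than''.

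\textbf{Step 3: penalty constraint and conclusion.} On the constructed neighborhood, $\|f_j-\id\|_{\ell^p}$ for the active penalized layers is bounded by a power of $\delta_n^{-1/J^*}$, at worst polynomially. The condition $\eta_{j,n}\|f_j-\id\|\lesssim n\eps_n^2$ then yields the stated bound $\eta_{j,n}\lesssim n^{(3J^*-2\alpha)/((2\alpha+3)J^*)}$, which is $n\eps_n^2$ divided by $\delta_n^{-1/J^*}$-type growth. The redundant layers are handled automatically by the $\tilde\eps_n$ small-ball bound: the requirement $\eta_{j,n}\tilde\eps_n\lesssim n\eps_n^2$ is exactly the upper end of the window. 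Finally, Theorem~\ref{thm:contraction} gives R\'enyi contraction. For the $\ell^2(\mu_X)$ statement, we use Assumption~\ref{assump:bounded} if the final layer is bounded. Otherwise we use a direct R\'enyi-to-$\ell^2$ argument that truncates at the bounded range of $f^*$.
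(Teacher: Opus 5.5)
Your plan follows the paper's route closely. The last $J^*$ layers are the active block, and the proxy is a smoothed step built from saturating sharpening maps. Each layer is handled by a Cameron--Martin shift plus the Mat\'ern small-ball bound, errors are propagated through downstream Lipschitz constants, and the scales are $\eps_n=n^{-\alpha/(2\alpha+3)}$, $\tilde\eps_n=n^{-3\alpha/(2\alpha+3)}$. The gap is that the balancing step you postpone is exactly where the hypothesis $J\ge J^*$ is used, and two of the ingredients you would feed into it are wrong.

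First, the Mat\'ern-$\alpha$ RKHS in $d=1$ is $H^{\alpha+1/2}$. A sharpening map of slope $\tau$ therefore has squared RKHS norm of order $\tau^{2\alpha}$ (this is \eqref{eq:sobolev tau}), not $\tau^{2\alpha-1}$.

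Second, all $J^*$ active layers must sharpen, with a common slope $\tau$ satisfying $\tau^{J^*}\gtrsim\eps_n^{-2}$. The proxy must also be defined as exactly that composition; the paper takes $f_n^*=\phi^{\tau}\circ\cdots\circ\phi^{\tau}$. A composition of $\tanh$ maps is not $\tanh(x/\delta_n)$, and Assumption~\ref{assump:prior mass} asks for sup-norm accuracy $\eps_n$ around the proxy you actually use. Your factorization uses $J^*-1$ sharpening layers and a fixed final layer, which leaves two options. Either the total slope is only $\delta_n^{-(J^*-1)/J^*}$ and proxy closeness fails. Or each slope is $\delta_n^{-1/(J^*-1)}$, and the computation below then needs $J^*-1\ge 4\alpha^2/3$, one layer more than the statement allows.

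With these corrections there are four constraints:
\begin{itemize}
\item $\tau^{J^*}\gtrsim\eps_n^{-2}$ (proxy closeness);
\item $\tilde\eps_n(C\tau)^{J^*}\lesssim\eps_n$ (modulus condition and error propagation);
\item $\tilde\eps_n^{-1/\alpha}\asymp n\eps_n^2$ (small ball);
\item $\tau^{2\alpha}\lesssim n\eps_n^2$ (Cameron--Martin).
\end{itemize}
The first three force $\eps_n\gtrsim n^{-\alpha/(2\alpha+3)}$. The first and last force $\eps_n\gtrsim n^{-J^*/(2J^*+4\alpha)}$, and this is no more restrictive than the former if and only if $3J^*\ge4\alpha^2$. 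That comparison is where $J^*=\lceil4\alpha^2/3\rceil$ comes from; your exponent $2\alpha-1$ would give the threshold $2\alpha(2\alpha-1)/3$ instead. Taking $\tau\asymp\eps_n^{-2/J^*}$ then yields the active-layer bound $\eta_{j,n}\lesssim n\eps_n^2/\tau=n^{(3J^*-2\alpha)/((2\alpha+3)J^*)}$, as in the statement.

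Your fallback of accepting a slower $\eps_n$ is not available. ``No slower than $n^{-\alpha/(2\alpha+3)}$'' requires $\eps_n\lesssim n^{-\alpha/(2\alpha+3)}$. Your own Step~1 shows this is the fastest rate the proxy and small-ball constraints permit, so the argument must close exactly at that rate. It does so only because the depth condition makes the RKHS constraint non-binding.

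Finally, a truncation argument yields $\ell^2(\mu_X)$ contraction only for a truncated composition, since the R\'enyi divergence controls only $\int\min\{1,(\C(\theta)-f^*)^2\}\,d\mu_X$. As in the paper, the literal $\ell^2(\mu_X)$ claim for $\C(\theta)$ needs Assumption~\ref{assump:bounded}.
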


The contraction rate $n^{-\frac{\alpha}{2\alpha+3}}$ shows, the estimation rate can actually be arbitrarily fast, approaching the parametric rate $n^{-1/2}$, should one choose sufficient large $\alpha$ for layerwise GPs and sufficient deep architecture for DGP, as if the discontinuity plays little effect on the estimation quality. Due to the lack of studies in literature, we do not yet know whether the rate $n^{-\frac{\alpha}{2\alpha+3}}$ is optimal or not. The proof technique for this corollary also deserves independent interests, as it exhausts all the possible $\tilde{\epsilon}_n, \epsilon_n$ that satisfy Assumptions \ref{assump:proxy regular}, \ref{assump:prior mass}, producing the most favorable rate. Both the optimal rate, as well as the techniques to find the optimal rates, are promising further research topics.

\paragraph{Intrinsic Low  Dimensional Structures.}
While the H\"{o}lder smoothness and local structures can be illustrated in unidimensional examples, certain structures only exist in $d$-dimensional ($d\ge 2$) space. We are particularly interested in intrinsic lower dimensional structures. That is, despite the true data-generating function $f^*$ is a mapping from $\mathbb{R}^d$ to itself, it admits a canonical decomposition $f^* = f_J^*\circ \cdots \circ f_2^* \circ f_1^*$, where each $f_j^*$ has either support or image belonging to a lower dimensional subspace of $\mathbb{R}^d$. In such situation, it would be useful for the DGP prior to adapt to the intrinsic lower dimensional structure, providing larger prior mass and better (fractional) posterior contraction rates. The following corollary provides such a case.

\begin{corollary}[Intrinsic structure]\label{cor:intrinsic}
	Let $\X=[-1,1]^2$ and let the true data-generating function be $f^*=g^*(x_1/2+x_2/2)$, where $g^*:[-1,1]\to\mathbb{R}\in C^\beta([-1,1])_1$ for $\beta = 1$. Assume a DGP model with $J\ge 2$, where the first $J-1$ layers have layerwise GP prior being Mat\'{e}rn process of regularity $\alpha$, for $\alpha\ge 2$, and the last layer follows Mat\'{e}rn process of regularity $\beta$. Choose $\eta_{J-1,n}\ll n^{\frac{1}{3}}$ and $n^{\frac{1}{3}} \lesssim \eta_{j,n} \lesssim n^{\frac{1}{3} + \frac{\alpha}{4}} $ for all $1\le j\le J-2$. Then $f^*$ can be optimally estimated with DGP at the rate of $n^{-\frac{1}{3}}$.
\end{corollary}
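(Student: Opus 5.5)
We plan to verify the hypotheses of Theorems~\ref{thm:contraction} and~\ref{thm:parsimonious} with active block $J_0=J-1$. The layer $f_{J-1}$ will implement the projection $x\mapsto (x_1/2+x_2/2,\,\cdot)$, and $f_J$ will implement $g^*$. The redundant layers $j\le J-2$ will sit near $\id$. Assumption~\ref{assump:layer regular} holds for the Mat\'ern layers with regularity $\alpha$ in dimension $d=2$: the small-ball exponent is $\epsilon^{-2/\alpha}$, and the coordinate maps $\id_k$ restricted to $\tilde\X$ have finite RKHS norm once we take compactly supported smooth extensions. Since $\alpha\ge 2$, these extensions lie in the Sobolev-type RKHS.

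\textbf{Choice of scales.} We take $\epsilon_n=n^{-1/3}$, so $n\epsilon_n^2=n^{1/3}$, and we take $f_n^*=f^*$, extended to $\tilde\X$ through a Lipschitz extension of $g^*$. Because $f^*$ is Lipschitz, condition \eqref{eq:proxy lipschitz} holds with $\tilde\epsilon_n\asymp\epsilon_n$. The small-ball requirement then becomes $\tilde\epsilon_n^{-2/\alpha}=n^{2/(3\alpha)}\lesssim n^{1/3}$, which holds for $\alpha\ge 2$. With $\tilde\epsilon_n=n^{-1/3}$, the window $n\epsilon_n^2\lesssim\eta_{j,n}\lesssim n\epsilon_n^2/\tilde\epsilon_n=n^{2/3}$ is narrower than the stated $n^{1/3+\alpha/4}$. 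To match the statement, I would instead take $\tilde\epsilon_n$ as small as Assumption~\ref{assump:proxy regular} permits. The constraint $\tilde\epsilon_n^{-2/\alpha}\lesssim n^{1/3}$ allows $\tilde\epsilon_n=n^{-\alpha/6}$, but \eqref{eq:proxy lipschitz} is only easier for smaller $\tilde\epsilon_n$. In any case, the prior-mass step below uses only an $\epsilon_n$-accurate first layer, so any admissible $\tilde\epsilon_n\le\epsilon_n$ is harmless. I would pick $\tilde\epsilon_n$ so that $n\epsilon_n^2/\tilde\epsilon_n$ covers the stated upper bound. If the exponent $\alpha/4$ does not fall out exactly, I expect the discrepancy to be only in how $\tilde\epsilon_n$ is chosen, not in the argument.

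\textbf{Prior mass (Assumption~\ref{assump:prior mass}).} We bound the probability of the event
\[
\{\|f_{J-1}-h\|_\infty\le\delta\}\cap\{\|f_J-G\|_\infty\le\epsilon_n\}.
\]
Here $h(x)=((x_1+x_2)/2,(x_1+x_2)/2)$ is the target layer, with finite $\mc H_{J-1}$-norm after extension. The map $G(z)$ is a two-variable function, for instance $g^*$ evaluated at $z_1$, whose extension is $C^1$ and hence $C^\beta$ with $\beta=1$; the last layer has regularity $\beta=1$. On this event, for $x\in\tilde\X$,
\[
|f_J(f_{J-1}(x))-f^*(x)|\le \|f_J-G\|_\infty+|G(f_{J-1}(x))-G(h(x))|\le \epsilon_n+\delta.
\]
So $\delta=\epsilon_n$ suffices, using the Lipschitz property of $G$, which we need to arrange; this requires care because $G$ is only $C^1$. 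By the Cameron--Martin bound and Assumption~\ref{assump:layer regular}, the first event has log-mass $\gtrsim-\epsilon_n^{-2/\alpha}\ge -n^{1/3}$. For the second event we use the standard Mat\'ern concentration function for a $C^1$ target with regularity $\beta=1$ in $d=2$, which gives log-mass $\gtrsim-\epsilon_n^{-2}$. This is $-n^{2/3}$, which is too small. \textbf{This is the main obstacle.} The fix is to let $G$ depend only on $z_1$ and to use the one-dimensional effective structure. Alternatively, one can make $f_{J-1}$ output $(x_1+x_2)/2$ in both coordinates and exploit that the posterior only needs $G$ accurate near the diagonal; but the small-ball exponent of a 2D Mat\'ern is intrinsically $\epsilon^{-2/\beta}$. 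I would therefore first check whether the intended scaling inflates the diagonal, so that the last layer effectively sees a rescaled one-dimensional problem as in Corollary~\ref{eg:Holder over}. If not, I would read the last-layer Mat\'ern as one-dimensional, that is, acting on a single coordinate, which yields $\epsilon_n^{-1}=n^{1/3}$. Finally, the penalty constraint on the active layer is $\eta_{J-1,n}\|f_{J-1}-\id\|_{\ell^p}\lesssim n^{1/3}$, and it holds since $\|h-\id\|$ is bounded and $\eta_{J-1,n}\ll n^{1/3}$.

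\textbf{Conclusion.} With all assumptions verified, Theorem~\ref{thm:contraction} gives the Rényi contraction rate $n\epsilon_n^2=n^{1/3}$, that is, rate $n^{-1/3}$. This is the minimax rate for a Lipschitz function of one effective variable. Theorem~\ref{thm:parsimonious} gives identity shrinkage of layers $j\le J-2$ in the range $\eta_{j,n}\gg n^{1/3}$.
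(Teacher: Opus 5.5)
\textbf{There is a genuine gap: the last-layer prior mass is never established.} Your skeleton matches the paper's: active block $J_0=J-1$, $f_n^*=f^*$, a two-layer construction combined through Lemma~\ref{lem:DGP small ball}, and the penalty check on $f_{J-1}$ via $\eta_{J-1,n}\ll n^{1/3}$. But the step you yourself flag as the main obstacle is left open, and none of your fallbacks closes it.

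By \eqref{eq:dgp}, the last layer is a Mat\'ern process of regularity $\beta=1$ on $\mathbb R^2$. Demanding $\|f_J-G\|_\infty\le\epsilon_n$ on a two-dimensional region costs $\epsilon_n^{-2}=n^{2/3}$ in log prior mass, whatever $G$ is. Rescaling the diagonal does not remove the dimension. Reinterpreting the last layer as a one-dimensional Mat\'ern changes the prior, so it proves a different statement.

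You do name the right idea (``$G$ only needs to be accurate near the diagonal''), but then discard it because a 2D Mat\'ern has small-ball exponent $\epsilon^{-2/\beta}$. That exponent refers to a domain of fixed area; the point is to make the relevant domain thin.

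\textbf{How the paper closes it.} The paper takes $\bar f_{J-1}(x)=((x_1+x_2)/2,0)$ and $\bar f_J(z)=g^*(z_1)$. It requires the hidden active layer to be accurate at the fine scale $\tilde\epsilon_n$, not $\epsilon_n$. Lemma~\ref{lem:DGP small ball} then only needs $f_J$ to be $\epsilon_n$-close to $\bar f_J$ on a band of width of order $\tilde\epsilon_n$ around $\{z_2=0\}$.

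On that band one compares $Z$ with the degenerate process $\bar Z(z)=Z(z_1,0)$. This is the restriction of an isotropic Mat\'ern to a line, hence a one-dimensional Mat\'ern of regularity $\beta$, so $\ln\Pi(\|\bar Z-\bar f_J\|<\epsilon_n/2)\gtrsim-\epsilon_n^{-1}=-n^{1/3}$. The difference $W=Z-\bar Z$ has the following properties:
\begin{itemize}
\item it vanishes on the line;
\item it has variance $\lesssim\tilde\epsilon_n^2\ln(1/\tilde\epsilon_n)$ on the band;
\item by chaining plus Gaussian concentration (Lemma~\ref{lem:Z degen}), its sup norm is $\lesssim\tilde\epsilon_n\ln(1/\tilde\epsilon_n)$ with Gaussian tails.
\end{itemize}
Because $\bar Z$ and $W$ are dependent, the paper combines the two events by subtracting the tail of $\|W\|$. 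That tail must be small compared with $e^{-cn^{1/3}}$.

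\textbf{Consequences for your plan.} This comparison only works if $\tilde\epsilon_n\ll\epsilon_n$. For $\beta=1$, the pointwise standard deviation of $W$ at distance $\epsilon_n$ from the line is already of order $\epsilon_n\sqrt{\ln(1/\epsilon_n)}$. So your remark that ``an $\epsilon_n$-accurate first layer suffices, so any $\tilde\epsilon_n\le\epsilon_n$ is harmless'' is exactly what must be abandoned on this route.

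The fine-scale mass requirement $\tilde\epsilon_n^{-2/\alpha}\lesssim n^{1/3}$ for the hidden layer is where the hypothesis $\alpha\ge2$ is actually used. In your plan that hypothesis does no work, which is a symptom of the missing step.

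Separately, no admissible $\tilde\epsilon_n$ reproduces the exponent $\alpha/4$ in the $\eta$-window. The constraint forces $\tilde\epsilon_n\gtrsim n^{-\alpha/6}$, hence $n\epsilon_n^2/\tilde\epsilon_n\lesssim n^{1/3+\alpha/6}$, so do not expect it to fall out. The paper takes $\tilde\epsilon_n\asymp n^{-\alpha/(2\alpha+2)}$ and simply reads the window off \eqref{eq:eta-window-intuition} and \eqref{eq:eta range}.
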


The rate in Corollary \ref{cor:intrinsic} is essentially the rate for estimating the univariate H\"{o}lder $\beta$-smooth function $g^*$, as if one already knows the compositional structure of $f^*$.

\section{Numerical illustrations}\label{sec:experiments}

\paragraph{Setup and diagnostics.}
Throughout the synthetic experiments, the observed domain is $\X=[-1,1]^d$, whereas the identity penalty and all layerwise diagnostics are evaluated on the enlarged domain $\tilde\X=[-2,2]^d$.  We take $\tilde\mu$ to be the uniform probability measure on $\tilde\X$ and approximate its $L^2(\tilde\mu)$ norm on a deterministic grid $\mathcal G$.  Thus final prediction is evaluated on $\X$, while anchoring and representation diagnostics are evaluated on $\tilde\X$.  In the supplied MCMC implementation, penalty is evaluated using each retained state's GP conditional mean layer map; we use a finite-grid approximation to the function-space target.

For the main unidimensional experiments, we use $n=30$, $250$ test points, and $\sigma=0.10$.  For the main two-dimensional experiments we use $n=100$, a $23\times23$ test grid, and $\sigma=0.10$.  Each fitted model is a four-layer DGP.  We compare the ordinary fractional posterior with the identity-augmented target
\begin{align*}
    d\tilde\Pi_\rho(\theta\mid\D_n)
    \propto
    p(\D_n\mid\C(\theta))^\rho
    \exp\left\{-\sum_{j=1}^{3}\eta_{j,n}\|f_j-\id\|_{L^2(\tilde\mu)}\right\}
    d\Pi(\theta).
\end{align*}
Here the term \emph{anchor} refers to the fixed target $\id$. The tunable quantities are the penalty weights (i.e., ``anchor parameters'').  With the notational convention $\eta_{4,n}=0$, we parameterize them as
$(\eta_{1,n},\eta_{2,n},\eta_{3,n},\eta_{4,n})
    =\lambda(w_1,w_2,w_3,0),
     w_1\ge w_2\ge w_3\ge0$,
where $\lambda=0$ gives the regular DGP.  
The design-specific base schedules are listed in Appendix~\ref{subsec:synthetic-designs} and Appendix~\ref{sec:experiments-2d-designs}. 

The reported diagnostics are test root mean squared error (RMSE), the redundant-layer identity error $\Delta_{\id}$, and the partial-composition error $\Delta_{\mathrm{partial}}$. Let $\mc R$ be the set of redundant non-final layers and $\mc K$ the corresponding redundant partial-composition indices. For a posterior-mean tuple $\hat\theta=(\hat f_1,\ldots,\hat f_4)$, define $\hat C_k=\hat f_k\circ\cdots\circ\hat f_1$ and $C_k^*=f_k^*\circ\cdots\circ f_1^*$. Then
\begin{align*}
    \Delta_{\id}
    &=\frac{1}{|\mc R|}\sum_{j\in\mc R}
    \|\hat f_j-\id\|_{L^2(\mc G)}, \quad
    \Delta_{\mathrm{partial}}
    =\frac{1}{|\mc K|}\sum_{k\in\mc K}
    \|\hat C_k-C_k^*\|_{L^2(\mc G)}.
\end{align*}  Smaller values indicate a more stable and parsimonious layer representation.  For deliberately canceling symmetry traps, $\Delta_{\id}$ is the primary canonicalization diagnostic because the supplied canceling decomposition is not itself the target selected by the augmentation.  Design details are deferred to Appendix~\ref{subsec:synthetic-designs} and Appendix~\ref{sec:experiments-2d-designs}.

Figure~\ref{fig:edge-transition-anchor} provides limited empirical support for layerwise stabilization: strong penalties on the two
redundant layers concentrate their fitted maps near identity, while the weakly penalized third layer and unpenalized output remain nonlinear. All four component MSEs are smaller under augmentation in this run. These improvements do not establish unique recovery of the generating decomposition, and the penalty magnitudes still require finite-sample calibration.

\paragraph{Comparison with regular DGP.}
Tables~\ref{tab:experiments-1d-summary} and ~\ref{tab:experiments-2d-summary} of Appendix \ref{subsec:synthetic-designs} show the comparison between regular DGP and the proposed augmented DGP, where our method has much parsimonious estimated structures without sacrificing too much performance. 
As the number of redundant layers are unknown, \texttt{active\_penultimate\_warp} example also shows that a weakly penalized useful hidden layer can create a more difficult finite-sample prediction trade-off.  
 
The \texttt{holder\_correct} experiment uses the Mat\'{e}rn-$3/2$ kernel in every DGP layer, with the same kernel family for the regular and augmented fits.  Table~\ref{tab:section5-eta-window} in Appendix~\ref{subsec:dim-ntrain-sweep} reports means and standard deviations over three seeds and two initialization modes, serving as a finite-sample diagnostic of the tuning-window discussion, illustrating the expected finite-sample trade-off.  Moving away from $\lambda=0$ sharply reduces redundant-layer identity and partial-composition errors.  The smallest mean RMSE occurs at $\lambda=0.4$, whereas the strongest representation stabilization on the displayed grid occurs at $\lambda=2.5$.  Thus stronger anchoring need not monotonically improve prediction, which is consistent with the role of the upper tuning constraint in Section~\ref{sec:theory}.

\paragraph{Comparison with a single-layer GP in high dimensions.}
We further compare the regular DGP and identity-augmented DGP with a single-layer squared-exponential Gaussian process (RBF GP; equivalently, an RBF- or Matern-$\infty$- kernel GP).  In this experiment, both DGPs use residual random-Fourier-feature approximations to the RBF kernel with the default lengthscale rule $\ell=\sqrt d$, while the single-layer comparator is an exact ARD (Automatic Relevance Determination) RBF GP with one optimized lengthscale per input coordinate.  To keep this comparison interpretable, we fix the observation-noise standard deviation at $\sigma=0.10$ and report the three sample sizes $n\in\{100,500,1000\}$.  We also keep one synthetic truth fixed within each ambient dimension $d\in\{1,2,5,10\}$ rather than changing the truth with $n$.  This synthetic truth is selected using the three sample sizes jointly: among truths admissible at a given $d$, we choose the one with the largest average RMSE advantage of the better DGP variant over the RBF GP; if no truth has a positive average advantage, as at $d=1$, we choose the truth for which the RBF GP's average advantage is smallest.

\begin{table}[t]
\centering
\caption{Fixed-noise comparison with a single-layer RBF GP at $\sigma=0.10$.  One synthetic truth is held fixed for each dimension and is evaluated at $n\in\{100,500,1000\}$.  RMSE entries are mean (standard deviation) over five paired seeds.  Smaller is better; bold indicates the smallest means and variances in each row.}
\label{tab:dgp-vs-gp-favorable}
\small
\resizebox{\textwidth}{!}{%
\begin{tabular}{lccccc}
\toprule
Truth & $d$ & $n$ & RBF GP & Reg. DGP & Aug. DGP \\
\midrule
\texttt{edge\_transition} & 1 & 100 & \textbf{0.055 (0.024)} & 0.062 (0.020) & 0.081 (0.021) \\
\texttt{edge\_transition} & 1 & 500 & 0.025 (0.008) & \textbf{0.023 (0.006)} & 0.051 (0.007) \\
\texttt{edge\_transition} & 1 & 1000 & \textbf{0.013 (0.002)} & 0.018 (0.005) & 0.032 (0.009) \\
\addlinespace
\texttt{localized\_bumps\_sparse} & 2 & 100 & 0.157 (0.027) & 0.148 (0.029) & \textbf{0.146 (0.030)} \\
\texttt{localized\_bumps\_sparse} & 2 & 500 & 0.126 (0.004) & \textbf{0.067 (0.010)} & 0.079 (0.010) \\
\texttt{localized\_bumps\_sparse} & 2 & 1000 & 0.160 (0.009) & \textbf{0.049 (0.003)} & 0.065 (0.005) \\
\addlinespace
\texttt{nonlinear\_ridge\_edge} & 5 & 100 & 0.164 (0.010) & \textbf{0.131 (0.005)} & \textbf{0.131 (0.005)} \\
\texttt{nonlinear\_ridge\_edge} & 5 & 500 & 0.122 (0.007) & 0.087 (0.008) & \textbf{0.087 (0.006)} \\
\texttt{nonlinear\_ridge\_edge} & 5 & 1000 & 0.116 (0.003) & {0.073 (0.006)} & \textbf{0.073 (0.003)} \\
\addlinespace
\texttt{nonlinear\_ridge\_multiscale} & 10 & 100 & 0.332 (0.029) & \textbf{0.285 (0.027)} & 0.286 (0.027) \\
\texttt{nonlinear\_ridge\_multiscale} & 10 & 500 & 0.217 (0.005) & \textbf{0.174 (0.002)} & 0.175 (0.002) \\
\texttt{nonlinear\_ridge\_multiscale} & 10 & 1000 & 0.196 (0.003) & 0.162 (0.003) & \textbf{0.162 (0.002)} \\\bottomrule
\end{tabular}}
\end{table}

\begin{table}[h!]
\centering
\caption{Real-data RMSE comparison. RMSE entries are mean (standard deviation) over 10 random splits. ``Aug.'' and ``Reg.'' denote augmented and regular DGP fits. Smaller is better; bold indicates the smallest mean in each row. And ``Aug.W'' denotes the number of times Aug. DGP wins Reg. DGP, RBF DGP and Linear regressions. }
\label{tab:uci-rmse}
\small
\begin{tabular}{lccccccc}
\hline
Data set & $n$ & $d$ & Aug. DGP & Reg. DGP & RBF GP & Linear & Aug.W \\
\hline
Auto MPG & 398 & 7 & $2.86 (0.41)$ & $2.98 (0.61)$ & $\mathbf{2.74 (0.50)}$ & $3.36 (0.43)$ & 4/10 \\
Parkinsons Motor & 5875 & 19 & $\mathbf{6.45 (0.31)}$ & $6.78 (0.43)$ & $8.14 (0.13)$ & $7.63 (0.18)$ & 7/10 \\
Air Quality NOx & 7718 & 8 & $\mathbf{88.76 (9.27)}$ & $89.70 (9.86)$ & $90.06 (10.70)$ & $106.88 (11.04)$ & 7/10 \\
Power Plant & 9568 & 4 & $\mathbf{4.19 (0.13)}$ & $4.25 (0.15)$ & $4.21 (0.14)$ & $4.54 (0.12)$ & 8/10 \\ 
\hline
\end{tabular}
\end{table}

Table~\ref{tab:dgp-vs-gp-favorable} separates two effects that are otherwise easy to conflate: the predictive benefit of the deep architecture and the additional effect of identity-based canonicalization.  For $d=5, 10$, both DGP variants outperform the RBF GP at all three sample sizes on \texttt{nonlinear\_ridge\_edge}; the reduction in mean RMSE is about $20\%$ at $n=100$ and about $37\%$ at $n=1000$.  The regular and augmented DGPs differ by less than $0.001$ in mean RMSE in every row of this block.  The same pattern appears at $d=10$ for \texttt{nonlinear\_ridge\_multiscale}: both deep models improve on the RBF GP throughout, while their mutual difference remains below $0.001$.  These rows suggest that when a genuinely compositional nonlinear representation is useful, the identity augmentation can be nearly prediction-neutral while preserving the gain from depth.

The $d=2$ \texttt{localized\_bumps\_sparse} rows show the prediction-parsimony trade-off more sharply.  At $n=1000$, the RBF GP has mean RMSE $0.160$, compared with $0.049$ for the regular DGP and $0.065$ for the augmented DGP.  Thus the augmentation gives up some of the unpenalized DGP's predictive flexibility, but it still retains a large advantage over the single-layer GP.

The $d=1$ \texttt{edge\_transition} rows provide the complementary case.  Here the single-layer RBF GP is already well matched to the low-dimensional shallow target and has the smallest mean RMSE at $n=100$ and $n=1000$; the regular DGP is only slightly better at $n=500$, and the augmented DGP is worse in all three rows.  This prevents interpreting the augmentation as a universal prediction device.  

For the real-data comparison, the regular and augmented DGPs use the same RBF random-feature family (three residual hidden maps and an RBF random-feature output map), while the single-layer RBF GP uses a constant-scaled isotropic RBF covariance plus a fitted white-noise kernel; its covariance hyperparameters are optimized by marginal likelihood.
Table~\ref{tab:uci-rmse} suggests that the augmented DGP remains competitive on real data and can improve over the unpenalized DGP, especially on Parkinsons Motor, Air Quality NOx and Power Plant.
 
\section{Conclusion}

\paragraph{Structural non-identifiability of DGP.}
DGPs are defined through compositions of latent GP layers, so the likelihood depends on the layer tuple only through the composed regression map. 
We show when the prior is invariant under an adjacent-layer symmetry, or when sufficiently many disjoint symmetry-related copies have comparable prior weights as in Theorem~\ref{thm:non-conv}, the posterior needs not concentrate on a prescribed layerwise representative. 

\paragraph{Identity-biased canonicalization.}
We therefore use the augmentation to impose an explicit preference rather than to recover a uniquely likelihood-identified ``true'' decomposition. Theorem~\ref{thm:noniden break} gives the exact change in posterior mass between symmetry-related sets through the product of the prior Radon-Nikod\'ym factor and the exponential difference in identity-deviation penalties. Thus the novel augmentation identifies lower-penalty representatives  without sacrificing too much performance gain from DGP.

Under the prior-mass and proxy conditions in Section~\ref{sec:theory}, Theorem~\ref{thm:contraction} preserves function-level contraction in R\'enyi divergence. The optional $L^2(\mu_X)$ conclusion requires the additional bounded-output condition. Theorem~\ref{thm:parsimonious} further gives the concentration of redundant layers towards some
$n\epsilon_n^2/\eta_{j,n}$-neighborhood of the identity, which shrinks towards the identity under the stronger scaling $\eta_{j,n}\gg n\epsilon_n^2$. Hence,  the theory establishes a precise compatibility between function-level learning and identity-biased shrinkage of redundant layers.

\paragraph{Functional spaces for DGP.}

While it is generally agreed by the community that DGPs, in comparison to single GPs, can handle more complex functions, it is yet clear what exact function families can deep composition models like DGPs handle. 

Many practical problems, despite high-dimensional or complex
in nature, have intrinsic but hidden structures. Some of these structures include local structures \citep{luo2023sharded,luo2026efficient}
and intrinsic lower dimensions \citep{schmidt2020nonparametric}. 
The classical result is usually about the adaptation to smoothness, but we conjecture DGP is able to lift adaptation to a new level, managing not just smoothness, but a huge range of structures of the underlying data-generating function.

\begin{center}
{\large\bf DISCLOSURE STATEMENT}
\end{center}
HL was supported by U.S. Department of Energy under Contract DE-AC02-05CH11231
and U.S. National Science Foundation NSF Grant DMS-2412403, DMS-2606336, DMS-2616828.

\bibliographystyle{chicago}
\bibliography{yichen}

\newpage
\begin{center}
{\large\bf SUPPLEMENTARY MATERIAL}
\end{center}
{\bf Proofs and Additional Experiments:} Supplemental Material for ``Canonical Layer Inference for Deep Gaussian Processes via Symmetry-Breaking Augmentation''. \\
{\bf Codes:} Data have been made available at the following URL:  \url{github.com/hrluo}.

\appendix\label{sec:appendix}

\section{Preliminaries}
This section collects background facts on Gaussian processes small ball probabilities, metric conversion, and fractional posterior contraction. We cite standard results where possible and provide short derivations for the specialized forms used in the paper.

\subsection{Small Ball Probability}\label{sec:appen small ball}
The regularity of a Gaussian process $(Z(x): x\in\X)$ is often measured by the centered small ball probability
\begin{align*}
\mathbb{P}(\|Z\|_{\ell^\infty(\X)} <\epsilon).
\end{align*}
The decentered small ball probability of a process $Z$ with respect to a function $f:\X\to \mathbb{R}$ is defined as
\begin{align*}
\mathbb{P}(\|Z-f\|_{\ell^\infty(\X)} <\epsilon).
\end{align*}
It measures how much prior mass is concentrated around the function $f$, which is useful in guaranteeing the contraction behavior of Bayesian posterior, provided $f$ is the data-generating function. With $\mathcal{H}$ denoting the RKHS of the Gaussian process $Z$, one can relate the decentered small ball probability with the centered small ball probability as
\begin{equation}\label{eq:decenter RKHS}
\mathbb{P}(\|Z\|_{\ell^\infty(\X)} <\epsilon) e^{-\frac{1}{2}\|f\|_{\mc H}^2} \le \mathbb{P}(\|Z-f\|_{\ell^\infty(\X)} <\epsilon) \le \mathbb{P}(\|Z\|_{\ell^\infty(\X)} <\epsilon),
\end{equation}
where the first inequality and its proof can be found in Lemma 5.2 of \cite{vdVvZ2008}, and the second inequality comes from Anderson's Lemma.

\paragraph{Small ball probability for Mat\'{e}rn processes.}
Assume $Z^\tau$ is a Mat\'{e}rn process with regularity parameter $\alpha$ and (time) rescaling parameter $\tau$, in the sense that $Z^\tau_X := Z_{\tau X}$ for all $X$ belonging to the domain of the process $Z$. Then the centered small ball probability of $Z^\tau$ can be lower bounded as (Lemma 11.36 of \cite{GhosalVanDerVaart2017})
\begin{equation}\label{eq:matern centered small ball}
    \ln\mathbb{P}(\|Z^\tau\|_{\ell^\infty(\tilde{\mu})} <\epsilon) \gtrsim - \tau^d \epsilon^{-d/\alpha}.
\end{equation}
For $f\in C^\beta(\tilde{\X})_1$ a H\"{o}lder-$\beta$ function on the support of $\tilde{\mu}$ and $\mathcal{H}^\tau$ the RKHS of process $Z^\tau$, we have
\begin{equation}\label{eq:matern decentered small ball}
    -\ln\mathbb{P}(\|Z^\tau-f\|_{\ell^\infty(\tilde{\mu})} <\epsilon) \lesssim  \tau^d \epsilon^{-d/\alpha} + \tau^{-2\alpha} \epsilon^{-\frac{2(\alpha-\beta)+d}{\beta}}.
\end{equation}

\paragraph{Small ball probability for DGPs.}
For the DGPs defined in \eqref{eq:dgp}, for each non-final layer $j<J$, for $\bar f_j = (\bar f_j [1], \bar f_j [2], \cdots, \bar f_j [d])^T\in\mathcal{F}_j$, we consider the $d$-dimensional version of the small ball probability given by
\begin{align}\label{eq:multivaraite small ball}
    \Pi(\|f_j-\bar f_j \|_{\ell^\infty(\tilde{\mu})}<\epsilon)
    & = \prod_{k=1}^d \Pi(\|f_j[k]-\bar f_j [k]\|_{\ell^\infty(\tilde{\mu})}<\epsilon) \nonumber \\
    & = \prod_{k=1}^d \Pi(\|Z_{j,k}-\bar f_j [k]\|_{\ell^\infty(\tilde{\mu})}<\epsilon) \nonumber \\
    & \ge \prod_{k=1}^d \Pi(\|Z_{j,k}\|_{\ell^\infty(\tilde{\mu})}<\epsilon) e^{-\frac{1}{2}\|-\bar f_j [k]\|_{\mc H_j}^2}.
\end{align}

In the Bayesian nonparametric analysis of DGP, the small ball probability of each individual layer $j$, if properly controlled, can lead to the small ball probability of the whole DGP, detailed in the following lemma.

\begin{lemma}\label{lem:DGP small ball}
    Let $\bar f_j\in\mathcal F_j$, $1\le j\le J$, and let
    $\epsilon_j>0$. Let $\X_j\subset\mathbb R^d$ be domains with
    $\X_1=\tilde\X$. Assume that, for every $1\le j\le J-1$,
    \[
    \epsilon_{j+1}\ge
    \sup_{\substack{x,x'\in\X_{j+1}\\ \|x-x'\|_\infty<2\epsilon_j}}
    \|\bar f_{j+1}(x)-\bar f_{j+1}(x')\|_\infty,
    \]
    and that for every $y\in\bar f_j(\X_j)$ and
    $u\in[-2\epsilon_j,2\epsilon_j]^d$ one has $y+u\in\X_{j+1}$.
    Then
    \[
    \ln \Pi\!\left(
    \|f_J\circ\cdots\circ f_1-\bar f_J\circ\cdots\circ\bar f_1\|_{\ell^\infty(\tilde\mu)}
    <2\epsilon_J\right)
    \ge
    \sum_{j=1}^J
    \ln \Pi\!\left(\|f_j-\bar f_j\|_{\ell^\infty(\X_j)}<\epsilon_j\right).
    \]
\end{lemma}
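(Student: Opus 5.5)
The plan is to show a deterministic inclusion of events and then use independence of the layers under the product prior $\Pi$. Let
\[
E_j=\{\|f_j-\bar f_j\|_{\ell^\infty(\X_j)}<\epsilon_j\},\qquad 1\le j\le J.
\]
I will prove that on $\bigcap_{j=1}^J E_j$ the composition satisfies the $2\epsilon_J$ bound. Since $\Pi$ is a product measure across layers, $\Pi(\bigcap_j E_j)=\prod_j\Pi(E_j)$. Taking logarithms then gives the claim.

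For the inclusion, write $g_k=f_k\circ\cdots\circ f_1$ and $\bar g_k=\bar f_k\circ\cdots\circ\bar f_1$. I will argue by induction on $k$ that for every $x\in\tilde\X=\X_1$:
\begin{itemize}
\item[(a)] $\|g_k(x)-\bar g_k(x)\|_\infty<2\epsilon_k$;
\item[(b)] for $k<J$, $g_k(x)\in\X_{k+1}$.
\end{itemize}
The sup norm on the support of $\tilde\mu$ equals the sup over $\tilde\X$, so (a) at $k=J$ is what we need.

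For the base case $k=1$, we have $\|f_1(x)-\bar f_1(x)\|_\infty<\epsilon_1<2\epsilon_1$ on $\X_1$ by $E_1$. Writing $g_1(x)=\bar f_1(x)+u$ with $\|u\|_\infty<\epsilon_1\le2\epsilon_1$ and $\bar f_1(x)\in\bar f_1(\X_1)$, the covering assumption gives $g_1(x)\in\X_2$.

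For the inductive step, set $y=\bar g_k(x)$ and $z=g_k(x)$. Then
\[
g_{k+1}(x)-\bar g_{k+1}(x)=[f_{k+1}(z)-\bar f_{k+1}(z)]+[\bar f_{k+1}(z)-\bar f_{k+1}(y)].
\]
The first bracket is $<\epsilon_{k+1}$ because $z\in\X_{k+1}$ and $E_{k+1}$ holds. The second bracket is $\le\epsilon_{k+1}$ by the modulus assumption, since $\|z-y\|_\infty<2\epsilon_k$. For this we also need $y\in\X_{k+1}$. To get it, I will add to the inductive hypothesis that $\bar g_k(x)\in\bar f_k(\X_k)$ shifted appropriately lies in $\X_{k+1}$; this follows from $\bar g_{k-1}(x)\in\X_k$ together with the covering condition with $u=0$. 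The total is $<2\epsilon_{k+1}$, which is (a) at $k+1$.

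For (b) at $k+1$ when $k+1<J$, the difference $g_{k+1}(x)-\bar g_{k+1}(x)$ is $<2\epsilon_{k+1}$ in sup norm, and $\bar g_{k+1}(x)\in\bar f_{k+1}(\X_{k+1})$. The covering condition then gives $g_{k+1}(x)\in\X_{k+2}$.

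The main obstacle is exactly this domain bookkeeping: the inductive hypothesis has to track that both the true partial composition $\bar g_k(x)$ and the perturbed one $g_k(x)$ stay inside $\X_{k+1}$, so that $E_{k+1}$ and the modulus condition can be applied. I will therefore state the induction with three clauses: (a), (b), and $\bar g_k(x)\in\X_{k+1}$. Measurability of $\bigcap_j E_j$ and the factorization $\Pi(\bigcap_j E_j)=\prod_j\Pi(E_j)$ are routine given the independence of the layers.
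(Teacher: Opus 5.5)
Your proposal is correct and follows essentially the same route as the paper. Both arguments show the deterministic inclusion $\bigcap_j E_j \subseteq \{\|f_J\circ\cdots\circ f_1-\bar f_J\circ\cdots\circ\bar f_1\|_{\ell^\infty(\tilde\mu)}<2\epsilon_J\}$ by induction, tracking that both $g_k(x)$ and $\bar g_k(x)$ lie in $\X_{k+1}$. The inductive step splits the error into an approximation term and a modulus term, and the product structure of $\Pi$ then gives the bound.

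One small point, which the paper's write-up shares: a pointwise strict bound $<2\epsilon_J$ at every $x$ does not by itself make the supremum strictly less than $2\epsilon_J$. At the last step, bound the first bracket by $\|f_J-\bar f_J\|_{\ell^\infty(\X_J)}<\epsilon_J$ uniformly in $x$. The supremum is then at most that quantity plus $\epsilon_J$, which is strictly below $2\epsilon_J$.
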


\begin{proof}
Let
\[
B_j=\{f_j:\|f_j-\bar f_j\|_{\ell^\infty(\X_j)}<\epsilon_j\}.
\]
For $x\in\tilde\X$, write
$G_j=f_j\circ\cdots\circ f_1$ and
$\bar G_j=\bar f_j\circ\cdots\circ\bar f_1$.
We prove inductively that $G_j(x),\bar G_j(x)\in\X_{j+1}$ for $j<J$ and
$\|G_j-\bar G_j\|_{\ell^\infty(\tilde\mu)}<2\epsilon_j$.
For $j=1$, the approximation error is at most $\epsilon_1$, and the domain condition places both images in $\X_2$.
If the claim holds at $j$, then
\begin{align*}
\|G_{j+1}(x)-\bar G_{j+1}(x)\|_\infty
&\le
\|(f_{j+1}-\bar f_{j+1})(G_j(x))\|_\infty\\
&\quad+
\|\bar f_{j+1}(G_j(x))-\bar f_{j+1}(\bar G_j(x))\|_\infty\\
&<\epsilon_{j+1}+\epsilon_{j+1}
=2\epsilon_{j+1}.
\end{align*}
The domain assumption then permits the next step. Hence
$\cap_{j=1}^J B_j$ is contained in the displayed composition ball.
Independence of the layer priors gives the claimed probability bound.
\end{proof}

\subsection{Conversion of Metrics}
We collect the metric calculations used in the proofs. The R\'{e}nyi divergence was defined in Section~\ref{sec:theory}; see \cite{bhattacharya2019bayesian,castillo2025deep} for background.

Recalling the density $p_f^n$ defined in \eqref{eq:pf}, further define
\[
K_n(f^*,f)
=
\int p_{f^*}^n
\ln\frac{p_{f^*}^n}{p_f^n}\,d\nu_n
\]
and
\[
V_n(f^*,f)
=
\int p_{f^*}^n
\left(
\ln\frac{p_{f^*}^n}{p_f^n}-K_n(f^*,f)
\right)^2d\nu_n .
\]
\begin{lemma}[KL divergence and $\ell^2$ norm]\label{lem:kl-lp}
For all $f,f^*$ defined on $\X$ with $\|f-f^*\|_\infty\le C$,
\[
K_n(f^*,f)
=
\frac{n}{2\sigma^2}\|f-f^*\|_{\ell^2(\mu_X)}^2
\]
and
\[
V_n(f^*,f)
\le
n\left(\frac{1}{\sigma^2}+\frac{C^2}{4\sigma^4}\right)
\|f-f^*\|_{\ell^2(\mu_X)}^2.
\]
\end{lemma}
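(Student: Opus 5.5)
Both identities follow from writing the two Gaussian log-likelihoods explicitly and integrating against the data law. Fix $f,f^*$ on $\X$ with $\|f-f^*\|_\infty\le C$ and set $\Delta(x)=f(x)-f^*(x)$. Under $\mathbb P_{f^*}$ a single observation is $(X,Y)$ with $Y=f^*(X)+\varepsilon$. By \eqref{eq:pf} the single-observation log-ratio is
\[
\ell(x,y):=\ln\frac{p_{f^*}(x,y)}{p_f(x,y)}=\frac{(y-f(x))^2-(y-f^*(x))^2}{2\sigma^2}
=\frac{\Delta(x)^2}{2\sigma^2}-\frac{\varepsilon\,\Delta(x)}{\sigma^2}.
\]
The second equality uses $y-f(x)=\varepsilon-\Delta(x)$ and $y-f^*(x)=\varepsilon$. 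Since $p_{f^*}^n$ is a product density, the $n$-sample log-ratio is $\sum_{i=1}^n \ell(X_i,Y_i)$, a sum of i.i.d. terms. Consequently $K_n=nK_1$ and $V_n=nV_1$, where $V_1$ is the single-observation variance. So it suffices to treat $n=1$.

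\textbf{KL identity.} Condition on $X$ and use $\mathbb E[\varepsilon\mid X]=0$. This gives
\[
K_1=\mathbb E\,\ell(X,Y)=\frac{1}{2\sigma^2}\int_\X \Delta^2\,d\mu_X=\frac{1}{2\sigma^2}\|f-f^*\|_{\ell^2(\mu_X)}^2.
\]
This is finite because $\Delta$ is bounded. Multiplying by $n$ yields the first claim.

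\textbf{Variance bound.} Decompose
\[
\ell-K_1=A+B,\qquad A=\frac{\Delta(X)^2-\|\Delta\|_{\ell^2(\mu_X)}^2}{2\sigma^2},\qquad B=-\frac{\varepsilon\Delta(X)}{\sigma^2}.
\]
The term $A$ depends only on $X$, and $\mathbb E[\varepsilon\mid X]=0$, so $\mathbb E[AB]=0$. Hence $V_1=\mathbb E A^2+\mathbb E B^2$.

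For the noise term, $\mathbb E B^2=\sigma^{-4}\,\mathbb E[\varepsilon^2\Delta(X)^2]=\sigma^{-2}\|\Delta\|_{\ell^2(\mu_X)}^2$.

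For the design term, bound the variance of $\Delta(X)^2$ by its second moment and use $|\Delta|\le C$:
\[
\mathbb E A^2\le \frac{1}{4\sigma^4}\,\mathbb E\,\Delta(X)^4\le \frac{C^2}{4\sigma^4}\|\Delta\|_{\ell^2(\mu_X)}^2 .
\]
Adding the two pieces and multiplying by $n$ gives $V_n\le n\bigl(\sigma^{-2}+C^2/(4\sigma^4)\bigr)\|f-f^*\|_{\ell^2(\mu_X)}^2$.

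The one step needing care is the variance: the design randomness contributes the term $A$, and this is exactly where the sup-norm bound $C$ is used. The rest is routine Gaussian moment computation, and all integrals are finite since $\Delta$ is bounded.
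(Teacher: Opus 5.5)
Your proof is correct and follows essentially the same route as the paper. Both write the single-observation log-likelihood ratio as $\Delta(X)^2/(2\sigma^2)-\varepsilon\Delta(X)/\sigma^2$, take expectations for the KL identity, bound the variance by $\|\Delta\|_{\ell^2(\mu_X)}^2/\sigma^2+\mathbb E\Delta(X)^4/(4\sigma^4)$ with the sup-norm bound on the quartic term, and use independence to pass to $n$ samples; your check that the cross term vanishes via $\mathbb E[\varepsilon\mid X]=0$ only makes explicit a step the paper leaves implicit.
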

\begin{proof}
Let $\delta(x)=f(x)-f^*(x)$. Under $f^*$, for one observation,
\[
\ln\frac{p_f(Y\mid X)}{p_{f^*}(Y\mid X)}
=
\frac{\delta(X)\varepsilon}{\sigma^2}
-\frac{\delta(X)^2}{2\sigma^2},
\qquad
\varepsilon\sim N(0,\sigma^2).
\]
Taking expectations gives the KL identity. The variance of the one-observation log-likelihood ratio is bounded by
\[
\frac{\mathbb E\delta(X)^2}{\sigma^2}
+
\frac{\mathbb E\delta(X)^4}{4\sigma^4}
\le
\left(\frac{1}{\sigma^2}+\frac{C^2}{4\sigma^4}\right)
\|\delta\|_{\ell^2(\mu_X)}^2.
\]
Independence gives the stated $n$-sample variance bound.
\end{proof}

For $\rho\in(0,1)$, recall the definition of $\rho$-R\'{e}nyi divergence as in \eqref{eq:renyi}. The following lemma gives the lower bound needed to convert R\'{e}nyi contraction into $L^2(\mu_X)$ contraction on uniformly bounded sets.

\begin{lemma}[$\rho$-R\'{e}nyi divergence and $\ell^2$ norm]\label{lem:renyi-l2}
Fix $0<\rho<1$. If $\|f-f^*\|_{\ell^\infty(\mu_X)}\le C$ for some finite constant $C$, then there is a constant $c_{\rho,\sigma,C}>0$ such that
\begin{align*}
\frac{1}{n}D_\rho(p_f^n,p_{f^*}^n)
\ge
c_{\rho,\sigma,C}\|f-f^*\|_{\ell^2(\mu_X)}^2 .
\end{align*}
\end{lemma}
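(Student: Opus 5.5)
The plan is to work directly from the closed form in \eqref{eq:renyi}. Set $\delta(x)=f(x)-f^*(x)$ and $a=\rho(1-\rho)/(2\sigma^2)>0$. Since $\rho-1<0$, we have
\[
\frac1n D_\rho(p_f^n,p_{f^*}^n)=\frac{1}{1-\rho}\Bigl[-\ln\int_\X e^{-a\delta(x)^2}\,d\mu_X(x)\Bigr].
\]
The argument then reduces to two elementary scalar inequalities: a lower bound on $1-e^{-t}$ over a bounded range of $t$, and the bound $-\ln(1-u)\ge u$.

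The first step uses the sup-norm bound. By hypothesis $|\delta(x)|\le C$ for $\mu_X$-almost every $x$, so $t=a\delta(x)^2\in[0,aC^2]$. The map $t\mapsto 1-e^{-t}$ is concave with value $0$ at $t=0$, so it lies above its chord on $[0,aC^2]$. This gives
\[
1-e^{-t}\ge \kappa t,\qquad \kappa=\frac{1-e^{-aC^2}}{aC^2}\in(0,1],
\]
with the convention $\kappa=1$ if $C=0$. Applying this pointwise and integrating,
\[
u:=1-\int e^{-a\delta^2}d\mu_X\ \ge\ \kappa a\|\delta\|_{\ell^2(\mu_X)}^2 .
\]

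The second step notes that $u\in[0,1)$ and that $-\ln(1-u)\ge u$. Combining the two steps,
\[
\frac1n D_\rho\ \ge\ \frac{\kappa a}{1-\rho}\|\delta\|_{\ell^2(\mu_X)}^2=\frac{\rho\,\kappa}{2\sigma^2}\|\delta\|^2_{\ell^2(\mu_X)}.
\]
So the constant $c_{\rho,\sigma,C}=\rho\kappa/(2\sigma^2)$ works, and it is positive.

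No step here is a real obstacle. The only point needing care is that the constant degrades as $C$ grows: $\kappa$ behaves like $(aC^2)^{-1}$. This is why the uniform bound from Assumption~\ref{assump:bounded} is needed when the lemma is applied in Theorem~\ref{thm:contraction}. One should also check that the essential-sup bound, rather than a pointwise one, suffices. It does, because the inequality is applied $\mu_X$-almost everywhere inside the integral.
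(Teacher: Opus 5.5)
Your proof is correct and follows the paper's argument. Both rewrite $\frac1n D_\rho$ via the closed form, bound $1-e^{-t}$ below linearly on $[0,aC^2]$, and finish with $-\ln(1-u)\ge u$. The only difference is that you use the chord slope $\kappa=(1-e^{-aC^2})/(aC^2)$, whereas the paper uses the smaller endpoint slope $e^{-aC^2}$. As a result your constant decays like $(aC^2)^{-1}$ rather than exponentially in $C^2$, which is a harmless sharpening.
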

\begin{proof}
For fixed $x$, a direct Gaussian calculation gives
\begin{align*}
    \int p_f(y\mid x)^\rho p_{f^*}(y\mid x)^{1-\rho}\,dy
    =
    \exp\{-a(f(x)-f^*(x))^2\},
    \qquad
    a:=\frac{\rho(1-\rho)}{2\sigma^2}.
\end{align*}
With random design, independence gives
\begin{align*}
    \int (p_f^n)^\rho(p_{f^*}^n)^{1-\rho}\,d\nu_n
    =
    \left[\int_\X \exp\{-a(f(x)-f^*(x))^2\}\,d\mu_X(x)\right]^n.
\end{align*}
Let $u(x)=a(f(x)-f^*(x))^2$. Since $\|f-f^*\|_{\ell^\infty(\mu_X)}\le C$, we have $0\le u(x)\le U:=aC^2$. The elementary inequality $1-e^{-u}\ge e^{-U}u$ for $0\le u\le U$ yields
\begin{align*}
    \int_\X e^{-u(x)}\,d\mu_X(x)
    &\le
    1-e^{-U}\int_\X u(x)\,d\mu_X(x) \\
    &=
    1-e^{-U}a\|f-f^*\|_{\ell^2(\mu_X)}^2.
\end{align*}
Using $-\ln(1-t)\ge t$ for $t\in[0,1)$, and the definition of $D_\rho$ with $\rho-1<0$, we obtain
\begin{align*}
    D_\rho(p_f^n,p_{f^*}^n)
    &=
    \frac{n}{\rho-1}\ln\left[\int_\X e^{-u(x)}\,d\mu_X(x)\right] \\
    &\ge
    \frac{n e^{-U}a}{1-\rho}\|f-f^*\|_{\ell^2(\mu_X)}^2.
\end{align*}
The displayed bound proves the claim.
\end{proof}

\subsection{Fractional Posterior Contraction}
The Bayesian fractional posterior contraction was established in \cite{bhattacharya2019bayesian}. We tailor their results for our context and notations as follows.
\begin{lemma}[Augmented fractional posterior contraction]\label{lem:frac contract}
Suppose $\|f^*\|_{\ell^\infty(\X)}<\infty$. For $\rho\in(0,1)$ and the augmented prior $\tilde{\Pi}$, if there exists $\epsilon_n\to 0$ with $n\epsilon_n^2 \to \infty$ and an absolute constant $C$, such that
\begin{equation}\label{eq:prior mass l2}
    \ln \tilde{\Pi}(\|\mathcal{C}(\theta)-f^*\|_{\ell^2(\mu_X)}\le \epsilon_n, \; \|f\|_{\ell^\infty(\mu_X)}\le C) \gtrsim -n\epsilon_n^2,
\end{equation}
then there exists a sufficiently large constant $M>0$ such that
\begin{align*}
	\tilde{\Pi}_\rho\!\left(
	D_\rho\!\left(p_{\mathcal C(\theta)}^n, p_{f^*}^n\right)>M n\epsilon_n^2
	\,\middle|\,\D_n\right)
	\overset{\mathbb P_{f^*}^n}{\longrightarrow}0 .
\end{align*}
If, in addition, Assumption~\ref{assump:bounded} holds, then for sufficiently large $M>0$,
\begin{equation}\label{eq:posterior contract l2}
    \tilde{\Pi}_\rho(\|\mathcal{C}(\theta)-f^*\|_{\ell^2(\mu_X)} > M\epsilon_n\mid\D_n) \overset{\mathbb P_{f^*}^n}{\longrightarrow} 0.
\end{equation}
\end{lemma}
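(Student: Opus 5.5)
The plan is to reduce the statement to the standard fractional posterior contraction theorem of \citet{bhattacharya2019bayesian} (their Theorem 3.1), applied with the augmented prior $\tilde\Pi$ in place of $\Pi$. The key structural point is that, by Definition~\ref{def:augmented_fractional_posterior}, $\tilde\Pi_\rho(\cdot\mid\D_n)$ is exactly the ordinary $\rho$-fractional posterior built from the normalized prior $\tilde\Pi$, because the factors $R_j$ multiply only the prior. Moreover, the likelihood depends on $\theta$ only through $f=\C(\theta)$. We may therefore regard the pushforward $\tilde\Pi\circ\C^{-1}$ as a prior on regression functions. The fractional posterior of any set of the form $\{\theta:\C(\theta)\in A\}$ then coincides with the fractional posterior of $A$ under this pushforward. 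Hence it suffices to verify the KL-type prior mass condition of Bhattacharya et al.\ for the pushforward.

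\textbf{Step 1.} We convert the $\ell^2$ ball in \eqref{eq:prior mass l2} into a KL neighborhood. On the event $\|\C(\theta)-f^*\|_{\ell^2(\mu_X)}\le\epsilon_n$ with $\|\C(\theta)\|_{\ell^\infty(\mu_X)}\le C$, we have $\|\C(\theta)-f^*\|_\infty\le C+\|f^*\|_\infty$. Lemma~\ref{lem:kl-lp} then gives
\begin{align*}
K_n(f^*,\C(\theta))\le \frac{n\epsilon_n^2}{2\sigma^2},\qquad
V_n(f^*,\C(\theta))\lesssim n\epsilon_n^2 .
\end{align*}
So this event lies inside the set $B_n=\{K_n\le c\,n\epsilon_n^2,\ V_n\le c\,n\epsilon_n^2\}$ of Bhattacharya et al., and \eqref{eq:prior mass l2} yields $\tilde\Pi(B_n)\ge e^{-c'n\epsilon_n^2}$.

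\textbf{Step 2.} We invoke their Theorem 3.1 directly. That result is a deterministic-prior statement that needs no sieve or entropy condition, which is the advantage of the fractional posterior for $\rho<1$. It states that, with $\mathbb P_{f^*}^n$-probability at least $1-2/\{(D-1)^2 n\epsilon_n^2\}$,
\[
\tilde\Pi_\rho\big(D_\rho(p_{\C(\theta)}^n,p_{f^*}^n)\ge \tfrac{D+3t}{1-\rho}n\epsilon_n^2\,\big|\,\D_n\big)\le e^{-tn\epsilon_n^2}.
\]
Since $n\epsilon_n^2\to\infty$, taking $M=(D+3t)/(1-\rho)$ gives the first display. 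Here $D$ is the constant from Step 1.

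\textbf{Step 3.} We deduce the $\ell^2$ statement. Under Assumption~\ref{assump:bounded}, $\|\C(\theta)\|_\infty=\|f_J\circ\cdots\|_\infty\le c$ $\tilde\Pi$-almost surely, because $\tilde\Pi\ll\Pi$. Together with $\|f^*\|_\infty<\infty$, this makes $\|\C(\theta)-f^*\|_{\ell^\infty(\mu_X)}$ uniformly bounded. Lemma~\ref{lem:renyi-l2} gives $\|\C(\theta)-f^*\|^2_{\ell^2(\mu_X)}\le (c_{\rho,\sigma,C}n)^{-1}D_\rho$. Hence the event $\{\|\C(\theta)-f^*\|_{\ell^2}>M'\epsilon_n\}$ is contained in $\{D_\rho>c\,M'^2 n\epsilon_n^2\}$, and \eqref{eq:posterior contract l2} follows from Step 2 with $M'$ large.

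\textbf{Main obstacle.} The only delicate point is checking that Bhattacharya et al.'s theorem applies verbatim in this setting. That requires three things. The augmented prior must be a proper probability measure; it is, since $0<\prod_j R_j\le1$, so the normalizing constant lies in $(0,1]$. The model must be dominated with a random design; here the joint density on $(x,y)$ is the one in \eqref{eq:pf}. Finally, the Rényi divergence must be the one in Definition~\ref{def:renyi}. A measurability issue also arises, namely that $\theta\mapsto\C(\theta)$ must be measurable so that the pushforward is well defined. I would simply assume this, as is implicit throughout the paper.
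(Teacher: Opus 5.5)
Your proposal is correct and takes the same route as the paper's proof. Lemma~\ref{lem:kl-lp} turns the $\ell^2$-ball prior mass condition into the Kullback--Leibler-type condition; Theorem~3.1 of \citet{bhattacharya2019bayesian}, applied to the normalized prior $\tilde\Pi$, then gives the R\'enyi statement; and under Assumption~\ref{assump:bounded}, Lemma~\ref{lem:renyi-l2} converts it to $\ell^2(\mu_X)$. One small slip: the $D$ in Bhattacharya et al.'s theorem is a free constant, not the constant from your Step 1. The Step-1 constants should instead be absorbed by rescaling $\epsilon_n$ to $\bar\epsilon_n=L^{1/2}\epsilon_n$ for a large fixed $L$, so that the event lies in their neighborhood at radius $\bar\epsilon_n$ and has prior mass at least $e^{-n\bar\epsilon_n^2}$.
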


\begin{proof}
By Lemma \ref{lem:kl-lp}, the prior mass condition \eqref{eq:prior mass l2} implies the corresponding KL-type prior mass condition. Theorem 3.1 of \cite{bhattacharya2019bayesian} then yields contraction of the fractional posterior in $\rho$-R\'{e}nyi divergence at rate $\epsilon_n$. If we further have the bounded condition as in Assumption \ref{assump:bounded}, Lemma \ref{lem:renyi-l2} converts the R\'{e}nyi divergence contraction to the stated $\ell^2(\mu_X)$ contraction.
\end{proof}

\section{Proofs for Section \ref{sec:canonical composition} and \ref{sec:asym-penalty}}\label{sec:proof non}
\subsection{Proof of Proposition \ref{thm:non-id-general}}
\begin{proof}[Proof of Proposition \ref{thm:non-id-general}]
By the definition of the adjacent layer symmetry map $\Phi_{j,S}$, we have $\mc C(\Phi_{j,S}(\theta)) = \mc C (\theta), \;\forall \theta\in \Theta$. Let
$Z_{\rho,n}=\int_\Theta[p(\D_n|\mc C(\vartheta))]^\rho\Pi(d\vartheta)$.
Hence, $p(\D_n|\Phi_{j,S}(\theta)) = p(\D_n|\theta)$ and
\begin{align*}
\Pi_\rho(\Phi_{j,S}(B)|\D_n) 
= & \frac{1}{Z_{\rho,n}}\int_B [p(\D_n|\Phi_{j,S}(\theta))]^\rho \Pi^{\Phi_{j,S}}(d\theta) \\
= & \frac{1}{Z_{\rho,n}}\int_B [p(\D_n|\theta)]^\rho L_{\Phi_{j,S}}(\theta)\Pi(d\theta) \\
= & \int_B L_{\Phi_{j,S}}(\theta) \Pi_\rho(d\theta|\D_n).
\end{align*}
This proves the Radon-Nikod\'ym identity. If $\Pi^{\Phi_{j,S}}=\Pi$, then $L_{\Phi_{j,S}}=1$ $\Pi$-almost surely, and the invariance statement follows.
\end{proof}

\subsection{Proof of Example \ref{eg:translation 2}}
We use the Cameron-Martin theorem for Gaussian measures on a separable Banach space; see, for example, \citet[Theorem~2.4.5]{bogachev1998gaussian}. If $Z$ is centered with Cameron-Martin space $\mc H$ and $h\in\mc H$, then
\[
\frac{d\mathcal L(Z+h)}{d\mathcal L(Z)}(g)
=
\exp\left\{\langle h,g\rangle_{\mc H}^{\sim}-\frac12\|h\|_{\mc H}^2\right\},
\]
where $\langle h,g\rangle_{\mc H}^\sim$ denotes the Paley–Wiener extension of the RKHS inner product. 
Because the convention in \eqref{eq:transported-prior} is
$\Pi^\Phi(B)=\Pi(\Phi(B))$, the first-layer transported law under
$f_1\mapsto \sharp_a f_1$ is the law of $Z_{1,k}-a_k\mathbbm{1}_{\X}$ in coordinate $k$. Hence
\[
\frac{d\mathcal L(Z_{1,k}-a_k\mathbbm{1}_{\X})}
     {d\mathcal L(Z_{1,k})}(g_k)
=
\exp\left\{
-a_k\langle \mathbbm{1}_{\X},g_k\rangle_{\mc H_1}^{\sim}
-\frac{a_k^2}{2}\|\mathbbm{1}_{\X}\|_{\mc H_1}^2
\right\}.
\]
Independence across coordinates gives the product formula in the main text. For the second layer, $f_2\mapsto \flat_{-a} f_2$ is an input translation. Stationarity makes its law invariant under this input translation on the common function space. Combining the two factors proves the displayed derivative.

\subsection{Proof of Theorem \ref{thm:non-conv}}
\begin{proof}[Proof of Theorem \ref{thm:non-conv}]
	Let $B^*:=B_{\theta^*}(\epsilon,d_{\Theta})$. By condition (i) of the theorem statement, the sets $\Phi_{j,S_l}(B^*)$, $l=1,\ldots,N_\epsilon$, are pairwise disjoint. Hence
	\begin{equation}\label{eq:cond1 imp}
		1
		\ge
		\sum_{l=1}^{N_\epsilon}\Pi_\rho(\Phi_{j,S_l}(B^*)\mid\D_n)
	\end{equation}
	By condition (ii) of the theorem statement and Proposition \ref{thm:non-id-general}, we have $\forall 1\le l\le N_\epsilon$,
	\begin{equation}\label{eq:cond2 imp}
		\Pi_\rho(\Phi_{j,S_l}(B^*)) = \int_{B^*} L_{\Phi_{j,S_l}}(\theta) \Pi_\rho(d\theta|\D_n) \ge \inf_{\theta\in B^*} L_{\Phi_{j,S_l}}(\theta) \int_{B^*} \Pi_\rho(d\theta|\D_n) \ge c_1 \Pi_\rho(B^*).
	\end{equation}
	Combining equations \eqref{eq:cond1 imp} and \eqref{eq:cond2 imp}, we have
	\begin{align*}
		\Pi_\rho(B^*\mid\D_n)
		\le (c_1N_\epsilon)^{-1}.
	\end{align*}
\end{proof}

\subsection{Proof of Corollary \ref{cor:non-conv matern}}
\begin{proof}%
Let the metric $d_\Theta$ be the $\ell^p$ norm defined in equation \eqref{eq:theta norm} and denote 
$$B^*_- = \{\theta\in \Theta: \|\theta - \theta^*\|_{\ell^p(\tilde{\mu})}< \epsilon, \;\langle f_1, \mathbbm{1}_{\X} \rangle_{\mathcal{H}_1} <0 \},$$
$$B^*_+ = \{\theta\in \Theta: \|\theta - \theta^*\|_{\ell^p(\tilde{\mu})}< \epsilon, \; \langle f_1, \mathbbm{1}_{\X} \rangle_{\mathcal{H}_1}>0 \}
,$$
and $B^* = B_-^* \cup B_+^*$, which is an $\epsilon$ radius ball around $\theta^*$ in $\ell^p(\tilde \mu)$ metric. 
Let $m = \lfloor 1/(2\epsilon) \rfloor$.
Define sets
$$
\mathcal{A}_- = \Big\{a=(a_1,0,\cdots,0)\in\mathbb{R}^d: \;a_1\in \{-2\epsilon, -4\epsilon, \cdots, -2m\epsilon\} \Big\},
$$
$$
\mathcal{A}_+ = \Big\{a=(a_1,0,\cdots,0)\in\mathbb{R}^d: \;a_1\in \{2\epsilon, 4\epsilon, \cdots, 2m\epsilon\} \Big\},
$$
and $\mathcal{A} = \mathcal{A}_- \cup \mathcal{A}_+$.
In other words, $\mathcal{A}$ consists of vectors $a$ whose first coordinate is a multiplier of $2\epsilon$, while all other coordinates are zero. 

We now apply Theorem \ref{thm:non-conv} separately to sets $B^*_-$ and $B^*_+$. We first consider set $B^*_-$. Consider a collection of adjacent layer symmetry maps as the translation map in Example \ref{eq:translation map}, with $a\in\mc A_+$ as the amount of translation, denoted as
$$\Phi_{1,a}(f_1,f_2,\ldots,f_J)
:=(\sharp_a f_1,\; \flat_{-a} f_2,\; f_3,\ldots,f_J), \; a\in\mathcal{A}_+.$$
Noticing $|\mc A_+| = m \asymp 1/\epsilon$, we proceeds to verify the conditions (i) and (ii) with the maps $\Phi_{1,a}, \;\forall a\in\mathcal{A}_+$.

For condition (i), noticing for any two distinct elements $a, a'\in\mc A_+$, $|a_1-a'_1|\ge 2\epsilon$ and it is impossible to find $f_1\in\mathcal{F}_1$ satisfying $\|f_1+a - f_1^*\|_{\ell^p(\tilde{\mu})}<\epsilon$ and $\|f_1+a' - f_1^*\|_{\ell^p(\tilde{\mu})}<\epsilon$ simultaneously. Thus the sets $\Phi_{1,a}(B^*_-)$ and $\Phi_{1,a'}(B^*_-)$ are disjoint.

For condition (ii), recalling Example \ref{eg:translation 2}, for all $\theta \in B^*_-$, 
\begin{equation}\label{eq:lip mat 1}
L_{\Phi_{1,a}}(\theta) = \exp\Big(-a_1\langle f_1, \mathbbm{1}_{\X} \rangle_{\mc H_1} - \frac{a_1^2}{2} \|\mathbbm{1}_{\X} \|_{\mc H_1}^2 \Big).
\end{equation}
By the statement of the corollary, $|\mathbbm{1}_{\X} \|_{\mc H_1}^2 \le c$. By the definition of set $\mc A+$ and $B^*_-$, we have $a_1>0$ and $\langle f_1, \mathbbm{1}_{\X} \rangle_{\mc H_1} <0$, thus $a_1\langle f_1, \mathbbm{1}_{\X} \rangle_{\mc H_1} <0$. Therefore, 
$$
L_{\Phi_{1,a}}(\theta) \ge \exp(0 - c a_1^2) \ge \exp(-c), \quad\forall a\in\mc A_+, \;\theta\in B^*_-,
$$
where the second inequality is due to $|a_1|\le 1$. This verifies condition (ii). In view of Theorem \ref{thm:non-conv} and its proof, the conclusion of the this theorem can be applied to the set $B^*_-$, yielding
$$\Pi_\rho(B^*_-|\D_n) \lesssim 1/m \asymp \epsilon.$$
Applying exactly the same derivation on the set $B^*_+$ with translation maps $\Phi_{1,a}, \; a\in\mc A_-$, one concludes
$$\Pi_\rho(B^*_+|\D_n) \lesssim 1/m \asymp \epsilon.$$
Noticing $\{\theta: \|\theta-\theta^*\|_{\ell^p(\tilde\mu)}\le\epsilon\} = B^* = B^*_-\cup B^*_+$, we finish the proof.
\end{proof}

\subsection{Proof of Theorem \ref{thm:noniden break}}
\begin{proof}
Let $C_n = \int_{\Theta} [p(\D_n|\theta)]^\rho \prod_{j=1}^{J-1}R_j(f_j)\Pi(d\theta)$ denote the normalizing constant of the augmented fractional posterior. 
Write $\theta = (f_1, f_2, \cdots, f_J)$, and let $\theta' = \Phi_{j,S}(\theta)$, 
so that $\theta' = (f'_1, f'_2, \cdots, f'_J)$ with $f'_{j'} = f_{j'}$, for all $j'< j$ or $f'>j+1$, $\mc C(\theta')=\mc C(\theta)$, and $\Pi^{\Phi_{j,S}}(d\theta) = \Pi(\Phi_{j,S}(d\theta)) = \Pi(d\theta')$.
Then we have
\begin{align*}
\tilde\Pi_\rho(\Phi_{j,S}(B)\mid\D_n)
&=
\frac{1}{C_n}\int_{\Phi_{j,S}(B)}
[p(\D_n\mid\mc C(\theta'))]^\rho
R_j(f'_j) R_{j+1}(f'_{j+1})\prod_{\substack{1\le j'<J, \\j'\ne j, j'\ne j+1}}R_{j'}(f'_{j'}) \;
\Pi(d\theta')\\
&\overset{\theta'=\Phi_{j,S}(\theta)}{=}
\frac{1}{C_n}\int_B
[p(\D_n\mid\mc C(\theta))]^\rho
R_j(S\circ f_j) R_{j+1}(f_{j+1}\circ S^{-1})\prod_{\substack{1\le j'<J, \\j'\ne j, j'\ne j+1}}R_{j'}(f_{j'}) \;
\,\Pi^{\Phi_{j',S}}(d\theta)\\
&=
\frac{1}{C_n}\int_B
[p(\D_n\mid\mc C(\theta))]^\rho
e^{-\Delta_{j,S,n}(\theta)} \prod_{1\le j' < J}R_{j'}(f_{j'}) \;
\,\frac{\Pi^{\Phi_{'j,S}}(d\theta)}{\Pi(d\theta)} \Pi(d\theta)\\
&=
\frac{1}{C_n} \int_B
L_{\Phi_{j,S}}(\theta)e^{-\Delta_{j,S,n}(\theta)}
\left\{[p(\D_n\mid\mc C(\theta))]^\rho \prod_{1\le j' < J}R_{j'}(f_{j'}) \Pi_\rho(d\theta\mid\D_n) \right\}\\
&=
\int_B
L_{\Phi_{j,S}}(\theta)e^{-\Delta_{j,S,n}(\theta)}
\,\tilde\Pi_\rho(d\theta\mid\D_n),
\end{align*}
where the last equality is exactly the definition in \eqref{eq:zero energy}. If
$\tilde\Pi_\rho(B\mid\D_n)>0$, dividing by this quantity shows that the posterior-mass ratio is the conditional average over $B$ of
$L_{\Phi_{j,S}}(\theta)e^{-\Delta_{j,S,n}(\theta)}$. The assumed bounds
$m\le L_{\Phi_{j,S}}\le M$ give the two displayed inequalities, and the two asymptotic conclusions follow immediately.
\end{proof}

\section{Proofs of Theorems \ref{thm:contraction} and \ref{thm:parsimonious}}

Throughout this section, write
\begin{align*}
    A_n(\theta)
    :=
    \sum_{j=1}^{J-1}\eta_{j,n}
    \|f_j-\id\|_{\ell^p(\tilde\mu)}
\end{align*}
then the normalized augmented prior can be written as
\begin{align*}
    \tilde \Pi(d\theta)
    :=
    \frac{\exp\{-A_n(\theta)\}\Pi(d\theta)}
    {\int_\Theta \exp\{-A_n(\theta')\}\Pi(d\theta')}.
\end{align*}
The augmented fractional posterior can then be written as the ordinary
\(\rho\)-fractional posterior based on the prior \(\tilde\Pi\):
\begin{align*}
    \tilde\Pi_\rho(d\theta\mid\D_n)
    =
    \frac{[p(\D_n\mid \mc C(\theta))]^\rho\,\tilde\Pi(d\theta)}
    {\int_\Theta [p(\D_n\mid \mc C(\theta'))]^\rho
    \,\tilde\Pi(d\theta')}.
\end{align*}
We shall use this representation repeatedly.

We also use the following consequence of the enlarged-domain convention. As defined in Section \ref{sec:tech prelim}, $\X$ lies in the interior of $\tilde\X$. 
Thus, for \(n\) sufficiently large, the \(\tilde\epsilon_n\)-neighborhood of
\(\X\) is contained in \(\tilde \X\), namely
\begin{align*}
    \{z\in\mathbb R^d:\inf_{x\in\X}\|z-x\|_\infty
    \le \tilde\epsilon_n\}
    \subset \tilde \X .
\end{align*}
This result is used to evaluate later layers on the output of near-identity earlier layers.

\begin{lemma}[Composition of near-identity layers]
\label{lem:identity-block-composition}
Let \(1\le m<J\), and suppose that
\begin{align*}
    \|f_j-\id\|_{\ell^\infty(\tilde\X)}
    \le \delta ,
    \qquad j=1,\ldots,m,
\end{align*}
where \(m\delta\le \tilde\epsilon_n\). Then, for \(n\) sufficiently large,
\begin{align*}
    \|f_m\circ\cdots\circ f_1-\id\|_{\ell^\infty(\mu_X)}
    \le m\delta .
\end{align*}
In particular, if \(\delta=\tilde\epsilon_n/J_0\) and \(m=J_0-1\), then
\begin{align*}
    \|f_{J_0-1}\circ\cdots\circ f_1-\id\|_{\ell^\infty(\mu_X)}
    \le \tilde\epsilon_n .
\end{align*}
\end{lemma}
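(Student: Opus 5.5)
Write $G_k=f_k\circ\cdots\circ f_1$ with $G_0=\id$, and prove by induction on $k=0,1,\ldots,m$ the pointwise claim: for every $x\in\X$,
\begin{align*}
    \|G_k(x)-x\|_\infty\le k\delta .
\end{align*}
Since $\X$ carries $\mu_X$ and $\mu_X\ll\tilde\mu$, this pointwise bound on $\X$ gives the $\ell^\infty(\mu_X)$ bound at once. The case $k=0$ is trivial.

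For the inductive step, assume the bound holds at $k<m$. Then $G_k(x)$ lies within $\ell^\infty$-distance $k\delta\le m\delta\le\tilde\epsilon_n$ of the point $x\in\X$. By the enlarged-domain consequence stated just before the lemma, for $n$ sufficiently large the closed $\tilde\epsilon_n$-neighborhood of $\X$ lies in $\tilde\X$, so $G_k(x)\in\tilde\X$. Evaluating the hypothesis on $f_{k+1}$ at the point $z=G_k(x)$ gives $\|f_{k+1}(z)-z\|_\infty\le\delta$. The triangle inequality then yields
\begin{align*}
    \|G_{k+1}(x)-x\|_\infty\le \|f_{k+1}(z)-z\|_\infty+\|z-x\|_\infty\le (k+1)\delta ,
\end{align*}
which closes the induction. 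The ``in particular'' statement follows by substituting $\delta=\tilde\epsilon_n/J_0$ and $m=J_0-1$, since then $m\delta=(J_0-1)\tilde\epsilon_n/J_0\le\tilde\epsilon_n$.

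\textbf{Main obstacle: sup versus pointwise.} The hypothesis $\|f_j-\id\|_{\ell^\infty(\tilde\X)}\le\delta$ is stated as a sup norm on the support of $\tilde\mu$. What the step $\|f_{k+1}(z)-z\|_\infty\le\delta$ requires is this bound at \emph{every} point of $\tilde\X$, because $G_k(x)$ is not a $\tilde\mu$-random point and could land in a $\tilde\mu$-null set. I would handle this as follows. Read the norm literally as a supremum over $\tilde\X=\operatorname{supp}\tilde\mu$, which is the paper's convention for $p=\infty$. If an essential supremum is intended instead, I would invoke continuity of the GP sample paths (as for the Mat\'ern layers) so that the essential sup over the support equals the pointwise sup. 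The remaining bookkeeping is the nesting $\X\subset\operatorname{int}\tilde\X$, which makes the ``$n$ sufficiently large'' qualifier necessary.
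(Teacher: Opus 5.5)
Your proof is correct and follows the paper's argument: induction on the partial compositions $G_k$, using the enlarged-domain containment (valid once $\tilde\epsilon_n$ is small) to guarantee $G_k(\X)\subset\tilde\X$, and then the triangle inequality $\|f_{k+1}(G_k(x))-x\|_\infty\le\delta+k\delta$. Reading the $\ell^\infty$ hypothesis as a genuine supremum over $\tilde\X=\operatorname{supp}\tilde\mu$, which is the paper's stated convention, is exactly what makes the paper's norm-level step $\|(f_j-\id)\circ G_{j-1}\|_{\ell^\infty(\mu_X)}\le\delta$ legitimate, so your pointwise formulation is a useful clarification rather than a different route.
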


\begin{proof}
Let \(G_0=\id\) and \(G_j=f_j\circ G_{j-1}\). We prove by induction
that \(G_j(\X)\subset \tilde\X\) and
\begin{align*}
    \|G_j-\id\|_{\ell^\infty(\mu_X)}
    \le j\delta ,
    \qquad j=1,\ldots,m.
\end{align*}
For \(j=1\), this follows directly from
\(\|f_1-\id\|_{\ell^\infty(\tilde\mu)}\le\delta\) and
\(\X\subset\tilde\X\). Assume the claim holds for \(j-1\). Since
\((j-1)\delta\le m\delta\le\tilde\epsilon_n\), the enlarged-domain
condition implies \(G_{j-1}(\X)\subset\tilde\X\). Therefore
\begin{align*}
\begin{aligned}
    \|G_j-\id\|_{\ell^\infty(\mu_X)}
    &=
    \|f_j\circ G_{j-1}-\id\|_{\ell^\infty(\mu_X)}  \\
    &\le
    \|(f_j-\id)\circ G_{j-1}\|_{\ell^\infty(\mu_X)}
    +
    \|G_{j-1}-\id\|_{\ell^\infty(\mu_X)} \\
    &\le
    \delta+(j-1)\delta
    =
    j\delta .
\end{aligned}
\end{align*}
The induction is complete.
\end{proof}

\begin{lemma}[Augmented prior mass near identity]
\label{lem:augmented-identity-prior-mass}
Suppose Assumption \ref{assump:layer regular} holds and
\begin{align*}
    \eta_{j,n}\tilde\epsilon_n\lesssim n\epsilon_n^2,
    \qquad
    \tilde\epsilon_n^{-d/\alpha}\lesssim n\epsilon_n^2 .
\end{align*}
Then, for every redundant layer \(j<J_0\),
\begin{align*}
    \ln \tilde\Pi
    \left(
        \|f_j-\id\|_{\ell^\infty(\tilde\mu)}
        \le \tilde\epsilon_n/J_0
    \right)
    \gtrsim
    -n\epsilon_n^2,
\end{align*}
where
\begin{align*}
    \tilde\Pi(df_j)
    :=
    \frac{
        \exp\{-\eta_{j,n}\|f_j-\id\|_{\ell^p(\tilde\mu)}\}
        \Pi(df_j)
    }{
        \int
        \exp\{-\eta_{j,n}\|g-\id\|_{\ell^p(\tilde\mu)}\}
        \Pi(dg)
    }
\end{align*}
is the \(j\)th marginal of the augmented prior.
\end{lemma}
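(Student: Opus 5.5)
Write $U=\{f_j:\|f_j-\id\|_{\ell^\infty(\tilde\mu)}\le\tilde\epsilon_n/J_0\}$. Since $\tilde\Pi(df_j)$ is $\Pi(df_j)$ reweighted by $R_j$ and normalized, the target ratio is
\begin{align*}
    \tilde\Pi(U)=\frac{\int_U R_j(f_j)\,\Pi(df_j)}{\int R_j(g)\,\Pi(dg)} .
\end{align*}
The plan is to bound the numerator from below and the denominator from above, each on the logarithmic scale, and to show that each costs at most $O(n\epsilon_n^2)$. The denominator is immediate, because $R_j\le 1$ gives $\int R_j\,d\Pi\le 1$. Its logarithm is therefore nonpositive and can only help.

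For the numerator, fix $f_j\in U$. Since $\tilde\mu$ is a probability measure, $\|f_j-\id\|_{\ell^p(\tilde\mu)}\le \|f_j-\id\|_{\ell^\infty(\tilde\mu)}$ for every $p\in[1,\infty]$. I would also keep a harmless constant $d^{1/p}$ from summing the $d$ coordinates in $\|\cdot\|_p$. This gives $\|f_j-\id\|_{\ell^p(\tilde\mu)}\lesssim \tilde\epsilon_n$ on $U$. Combined with the hypothesis $\eta_{j,n}\tilde\epsilon_n\lesssim n\epsilon_n^2$, it follows that $R_j(f_j)\ge \exp\{-C n\epsilon_n^2\}$ uniformly on $U$. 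Consequently $\ln\int_U R_j\,d\Pi\ge -Cn\epsilon_n^2+\ln\Pi(U)$.

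It remains to lower bound $\ln \Pi(U)$. I would invoke the multivariate decentered small ball bound \eqref{eq:multivaraite small ball} with $\bar f_j=\id$. Assumption~\ref{assump:layer regular} gives $\|\id_k\|_{\mc H_j}\le c$, so each coordinate contributes at least $\mathbb P(\|Z_{j,k}\|_{\ell^\infty(\tilde\mu)}<\tilde\epsilon_n/J_0)\,e^{-c^2/2}$. The centered small ball part of Assumption~\ref{assump:layer regular} then yields
\begin{align*}
    \ln\Pi(U)\gtrsim -d\,(\tilde\epsilon_n/J_0)^{-d/\alpha}-dc^2/2 .
\end{align*}
Since $J_0$ and $d$ are fixed, this is $\gtrsim -\tilde\epsilon_n^{-d/\alpha}$, and the hypothesis $\tilde\epsilon_n^{-d/\alpha}\lesssim n\epsilon_n^2$ turns it into $\gtrsim -n\epsilon_n^2$. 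The additive constant $dc^2/2$ is absorbed because $n\epsilon_n^2\to\infty$. Adding the three logarithmic bounds gives the claim.

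The only delicate point I anticipate is a technical one. The small ball assumption is stated only for sufficiently small $\epsilon$ (which holds for large $n$ since $\tilde\epsilon_n\to0$), and the RKHS-norm bound on $\id_k$ must be interpreted through extensions agreeing on $\tilde\X$, in the restricted-norm sense of Section~\ref{sec:tech prelim}. I would check that the Cameron–Martin inequality \eqref{eq:decenter RKHS} applies with this restricted norm. It should, because the sup norm is only taken over $\tilde\X=\operatorname{supp}\tilde\mu$, so shifting by any extension is equivalent there. Otherwise the argument is a direct chaining of the given hypotheses.
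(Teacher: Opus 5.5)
Your proposal is correct and follows the paper's proof essentially step for step. You bound the normalizing constant by one, lower bound the penalty on the ball via $\|\cdot\|_{\ell^p(\tilde\mu)}\le d^{1/p}\|\cdot\|_{\ell^\infty(\tilde\mu)}$ together with $\eta_{j,n}\tilde\epsilon_n\lesssim n\epsilon_n^2$, and control $\Pi(U)$ by the decentered small-ball bound from Assumption~\ref{assump:layer regular}. Your explicit treatment of the additive $dc^2/2$ term and of the restricted RKHS norm fills in details that the paper leaves implicit.
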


\begin{proof}
Let
\begin{align*}
    B_{j,n}
    :=
    \left\{
        f_j:
        \|f_j-\id\|_{\ell^\infty(\tilde\mu)}
        \le \tilde\epsilon_n/J_0
    \right\}.
\end{align*}
Since \(0<R_j(f_j)\le 1\), the normalizing denominator of
\(\tilde\Pi\) is at most one. Hence
\begin{align*}
\begin{aligned}
    \tilde\Pi(B_{j,n})
    &\ge
    \int_{B_{j,n}}
    \exp\{-\eta_{j,n}\|f_j-\id\|_{\ell^p(\tilde\mu)}\}
    \Pi(df_j) \\
    &\ge
    \exp\{-d^{1/p}\eta_{j,n}\tilde\epsilon_n/J_0\}
    \Pi(B_{j,n}),
\end{aligned}
\end{align*}
because $\|g\|_{\ell^p(\tilde\mu)}\le d^{1/p}\|g\|_{\ell^\infty(\tilde\mu)}$, with $d^{1/\infty}=1$. 
By the small ball probability in Assumption
\ref{assump:layer regular},
\begin{align*}
    \ln \Pi(B_{j,n})
    \gtrsim
    -(\tilde\epsilon_n/J_0)^{-d/\alpha}.
\end{align*}
Therefore
\begin{align*}
    \ln \tilde\Pi(B_{j,n})
    \gtrsim
    -\eta_{j,n}\tilde\epsilon_n
    -\tilde\epsilon_n^{-d/\alpha}
    \gtrsim
    -n\epsilon_n^2 .
\end{align*}
\end{proof}

\begin{lemma}[Augmented prior mass of the active block]
\label{lem:augmented-active-block-mass}
Consider the set
\begin{align*}
	A_{J_0}
	:=
	\Big\{(f_{J_0},\ldots,f_J):
	&\|f_J\circ\cdots\circ f_{J_0}-f_n^*\|_{\ell^\infty(\tilde\mu)}
	\le \epsilon_n, \\
	&\eta_{j,n}\|f_j-\id\|_{\ell^p(\tilde\mu)}
	\le n\epsilon_n^2,\quad J_0\le j\le J-1
	\Big\}.
\end{align*}
If Assumption \ref{assump:prior mass} holds in the sense that $\ln \Pi(A_{J_0}) \gtrsim -n\epsilon_n^2$, 
then the corresponding augmented-prior mass of the active block is bounded
below by
\begin{align*}
    \ln \tilde\Pi(A_{J_0})
    \gtrsim
    -n\epsilon_n^2.
\end{align*}
\end{lemma}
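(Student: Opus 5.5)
Since the augmentation weights $R_j$ with $j\ge J_0$ depend only on the active coordinates, the marginal of $\tilde\Pi$ on $(f_{J_0},\ldots,f_J)$ is proportional to $\prod_{j=J_0}^{J-1}R_j(f_j)$ times the product prior on these layers. The plan is to copy the argument of Lemma~\ref{lem:augmented-identity-prior-mass}. Write $\tilde\Pi(A_{J_0})$ as the ratio of $\int_{A_{J_0}}\prod_{j=J_0}^{J-1}\exp\{-\eta_{j,n}\|f_j-\id\|_{\ell^p(\tilde\mu)}\}\,\Pi(df_{J_0}\cdots df_J)$ to the analogous integral over the whole space. The factors belonging to the redundant layers $j<J_0$ cancel between numerator and denominator, because the prior is a product across layers and $A_{J_0}$ is a cylinder set in the active coordinates.

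The first step is to bound the denominator. Every $R_j$ lies in $(0,1]$, so the normalizing integral is at most one, and hence $\tilde\Pi(A_{J_0})$ is at least the unnormalized numerator. The second step is to bound the numerator using the definition of $A_{J_0}$. On that set each penalized active layer satisfies $\eta_{j,n}\|f_j-\id\|_{\ell^p(\tilde\mu)}\le n\epsilon_n^2$ for $J_0\le j\le J-1$. Therefore the integrand is at least $\exp\{-(J-J_0)\,n\epsilon_n^2\}\ge \exp\{-J n\epsilon_n^2\}$ everywhere on $A_{J_0}$.

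Combining the two steps gives
\[
\ln\tilde\Pi(A_{J_0})\ \ge\ -J\,n\epsilon_n^2+\ln\Pi(A_{J_0}).
\]
The hypothesis $\ln\Pi(A_{J_0})\gtrsim -n\epsilon_n^2$ from Assumption~\ref{assump:prior mass} then yields the claim, with an implicit constant enlarged by the fixed depth $J$. In the case $J_0=J$ there are no penalized active layers, the penalty term vanishes, and the same bound holds trivially.

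The only point needing care is the marginalization: $\tilde\Pi(A_{J_0})$ must mean the mass of the cylinder set $\{\theta:(f_{J_0},\ldots,f_J)\in A_{J_0}\}$ under the full augmented prior. I would handle this by factoring $e^{-A_n(\theta)}\Pi(d\theta)$ into a redundant part and an active part via Fubini and independence of the layers. The redundant factor then cancels exactly, so no lower bound on the redundant normalizer is needed.
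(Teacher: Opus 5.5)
Your proposal is correct and follows essentially the same route as the paper: bound the normalizer by one using $R_j\le 1$, bound the integrand on $A_{J_0}$ below by $\exp\{-(J-J_0)n\epsilon_n^2\}$, and conclude $\tilde\Pi(A_{J_0})\ge e^{-(J-J_0)n\epsilon_n^2}\,\Pi(A_{J_0})$. Your explicit Fubini argument, in which the redundant-layer factors cancel, simply makes precise the marginalization that the paper uses implicitly when it writes the ratio over the active layers only.
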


\begin{proof}
For all $j\ge J_0$, the augmentation term $R_j(f_j)$ is at most one. On $A_{J_0}$, each of the $J-J_0$ penalized active layers satisfies $\eta_{j,n}\|f_j-\id\|_{\ell^p(\tilde\mu)}\le n\epsilon_n^2$. Hence,
\begin{align*}
	\begin{aligned}
		\widetilde\Pi (A_{J_0})
		& = \frac{\int_{A_{J_0}}
			\prod_{j=J_0}^{J-1}
			\exp\{-\eta_{j,n}\|f_j-\id\|_{\ell^p(\tilde\mu)}\}
			\Pi(d f_{J_0})\cdots \Pi(df_J)}{\int_{\mc F_{J_0}\times \cdots \times \mc F_{J}}
			\prod_{j=J_0}^{J-1}
			\exp\{-\eta_{j,n}\|f_j-\id\|_{\ell^p(\tilde\mu)}\}
			\Pi(d f_{J_0})\cdots \Pi(df_J)}    \\
		& \ge \frac{\int_{A_{J_0}}
			\prod_{j=J_0}^{J-1}
			\exp(-  n\epsilon_n^2)
			\Pi(d f_{J_0})\cdots \Pi(df_J)}{\int_{\mc F_{J_0}\times \cdots \times \mc F_{J}}
			\prod_{j=J_0}^{J-1}
			\Pi(d f_{J_0})\cdots \Pi(df_J)}  \\
		&\ge
		\exp\{-(J-J_0)n\epsilon_n^2\}
		\Pi(A_{J_0}),
	\end{aligned}
\end{align*}
which, by Assumption~\ref{assump:prior mass}, is at least $\exp\{-C'n\epsilon_n^2\}$ for some constant $C'$ independent of sample size $n$. Taking logarithms gives the claim.
\end{proof}

\begin{proof}[Proof of Theorem \ref{thm:contraction}]
We first verify a Kullback-Leibler and centered log-likelihood-variance prior mass condition for the normalized augmented prior $\tilde\Pi$; this yields the R\'{e}nyi divergence conclusion directly. Assumption~\ref{assump:bounded} is used only in the final metric-conversion step. By Assumption \ref{assump:proxy regular},
\begin{align*}
    \|f_n^*-f^*\|_{\ell^2(\mu_X)}
    \le \epsilon_n .
\end{align*}
We construct a set of augmented prior mass at least
\(\exp\{-Cn\epsilon_n^2\}\) on which
\(\|\mc C(\theta)-f_n^*\|_{\ell^\infty(\mu_X)}\lesssim\epsilon_n\).
Define
\begin{align*}
\begin{aligned}
    A
    :=
    \Big\{
    \theta\in\Theta:\;
    &\|f_j-\id\|_{\ell^\infty(\tilde\mu)}
    \le \tilde\epsilon_n/J_0,
    \; j< J_0; \; (f_{J_0},\ldots,f_J)\in A_{J_0}
    \Big\},
\end{aligned}
\end{align*}
where \(A_{J_0}\) is the active block event from Lemma
\ref{lem:augmented-active-block-mass}.

First we show that \(A\) is contained in an appropriate
\(\ell^2(\mu_X)\)-ball around \(f^*\). Let $G_{J_0-1}=\id$ when $J_0=1$, and otherwise let
\begin{align*}
    G_{J_0-1}
    :=
    f_{J_0-1}\circ\cdots\circ f_1 .
\end{align*}
By Lemma \ref{lem:identity-block-composition} when $J_0>1$, and trivially when $J_0=1$,
\begin{align*}
    \|G_{J_0-1}-\id\|_{\ell^\infty(\mu_X)}
    \le \tilde\epsilon_n .
\end{align*}
For \(\theta\in A\), the image \(G_{J_0-1}(\X)\) is contained in
\(\tilde\X\), and therefore
\begin{align*}
\begin{aligned}
    \|\mc C(\theta)-f_n^*\|_{\ell^\infty(\mu_X)}
    &=
    \|
    f_J\circ\cdots\circ f_{J_0}\circ G_{J_0-1}
    -f_n^*
    \|_{\ell^\infty(\mu_X)} \\
    &\le
    \|
    (f_J\circ\cdots\circ f_{J_0}-f_n^*)\circ G_{J_0-1}
    \|_{\ell^\infty(\mu_X)} \\
    &\qquad
    +
    \|f_n^*\circ G_{J_0-1}-f_n^*\|_{\ell^\infty(\mu_X)} \\
    &\le
    \|f_J\circ\cdots\circ f_{J_0}-f_n^*\|_{\ell^\infty(\tilde\mu)}
    +
    \sup_{\|x-x'\|_\infty\le\tilde\epsilon_n}
    |f_n^*(x)-f_n^*(x')| \\
    &\le
    2\epsilon_n ,
\end{aligned}
\end{align*}
where the last inequality uses Assumptions
\ref{assump:proxy regular} and \ref{assump:prior mass}. Consequently,
\begin{align}\label{eq:A dist}
    \|\mc C(\theta)-f^*\|_{\ell^2(\mu_X)}
    &\le
    \|\mc C(\theta)-f_n^*\|_{\ell^2(\mu_X)}
    +
    \|f_n^*-f^*\|_{\ell^2(\mu_X)} \nonumber \\
    &\le
    2\epsilon_n+\epsilon_n
    =
    3\epsilon_n .
\end{align}
Moreover, the same display and the boundedness of $f_n^*$ and $f^*$ imply that $\|\C(\theta)-f^*\|_{\ell^\infty(\mu_X)}$ is uniformly bounded on $A$. By Lemma~\ref{lem:kl-lp}, $A$ is therefore contained in the Kullback-Leibler and log-likelihood-variance neighborhood used by the fractional-posterior contraction theorem.

We proceed to show the set $A$ has sufficient prior mass under the augmented prior $\tilde\Pi$.
Since the
original DGP prior is a product prior and the augmentation factorizes over
layers, the normalized augmented prior also factorizes over layers. Thus
\begin{align*}
    \tilde\Pi (A)
    =
    \left[
    \prod_{j=1}^{J_0-1}
    \tilde\Pi
    \left(
        \|f_j-\id\|_{\ell^\infty(\tilde\mu)}
        \le \tilde\epsilon_n/J_0
    \right)
    \right]
    \tilde\Pi (A_{J_0}).
\end{align*}
By Lemmas \ref{lem:augmented-identity-prior-mass} and
\ref{lem:augmented-active-block-mass},
\begin{align}\label{eq:A prior mass}
    \ln \tilde\Pi (A)
    \gtrsim
    -n\epsilon_n^2 .
\end{align}
Combining equations \eqref{eq:A dist} and \eqref{eq:A prior mass} with the uniform bound in the preceding paragraph, the augmented prior assigns at least $\exp\{-Cn\epsilon_n^2\}$ mass to a Kullback-Leibler and centered log-likelihood-variance neighborhood of $f^*$ of order $n\epsilon_n^2$, by Lemma~\ref{lem:kl-lp}. 
Hence, by Lemma \ref{lem:frac contract}, for sufficiently large $M$,
\[
\tilde{\Pi}_\rho\!\left(
D_\rho(p_{\mc C(\theta)}^n,p_{f^*}^n)
>M n\epsilon_n^2
\,\middle|\,\D_n\right)
\overset{\mathbb P_{f^*}^n}{\longrightarrow}0.
\]
And if Assumption~\ref{assump:bounded} also holds, 
\[
\tilde{\Pi}_\rho\!\left(
\|\mc C(\theta)-f^*\|_{\ell^2(\mu_X)}
>M\epsilon_n
\,\middle|\,\D_n\right)
\overset{\mathbb P_{f^*}^n}{\longrightarrow}0
\]
after enlarging $M$ if necessary.
\end{proof}

\begin{proof}[Proof of Theorem \ref{thm:parsimonious}]
Fix \(1\le j<J_0\). Let
\begin{align*}
    t_{j,n}
    :=
    M \frac{n\epsilon_n^2}{\eta_{j,n}},
    \qquad
    E_j
    :=
    \left\{
        \theta:
        \|f_j-\id\|_{\ell^p(\tilde\mu)}
        > t_{j,n}
    \right\}.
\end{align*}
We prove
\begin{align*}
    \tilde\Pi_\rho(E_j\mid \D_n)
    \overset{\mathbb P_{f^*}^n}{\longrightarrow}0.
\end{align*}
Since \(j<J_0\) is arbitrary and \(J\) is fixed, the theorem follows by a
finite union bound.

Let
\begin{align*}
    r_n(f,f^*)
    :=
    -\ln
    \prod_{i=1}^n
    \frac{p(Y_i\mid X_i,f)}
         {p(Y_i\mid X_i,f^*)}.
\end{align*}
Then
\begin{align*}
    \tilde\Pi_\rho(E_j\mid\D_n)
    =
    \frac{
        \int_{E_j}
        \exp\{-\rho r_n(\mc C(\theta),f^*)\}
        \tilde\Pi(d\theta)
    }{
        \int_{\Theta}
        \exp\{-\rho r_n(\mc C(\theta),f^*)\}
        \tilde\Pi(d\theta)
    }.
\end{align*}

We first lower bound the denominator. From the proof of Theorem
\ref{thm:contraction}, there exists a set \(A\) such that
\begin{align*}
    \ln\tilde\Pi(A)\ge  -C_A n\epsilon_n^2
\end{align*}
for some constant \(C_A>0\), and \(A\) is contained in a Kullback-Leibler and centered log-likelihood-variance neighborhood of \(f^*\) of
order \(n\epsilon_n^2\). Therefore, by the usual denominator lower-bound
argument for Bayesian posteriors (see, e.g., proof of Theorem 3.1 of \cite{bhattacharya2019bayesian}), there exists an event \(\Omega_n\) with
\(  \mathbb P_{f^*}^n(\Omega_n)\to1\), such that on \(\Omega_n\),
\begin{align*}
    \int_{\Theta}
    \exp\{-\rho r_n(\mc C(\theta),f^*)\}
    \tilde\Pi(d\theta)
    \ge
    \exp\{-C_D n\epsilon_n^2\}.
\end{align*}
Here \(C_D>0\) is a constant independent of sample size \(n\).

It remains to upper bound the enumerator. Since
\begin{align*}
    \mathbb E_{f^*}^n
    \exp\{-\rho r_n(\mc C(\theta),f^*)\}
    =
    \exp\{-(1-\rho)D_\rho(p_{\mc C(\theta)}^n,p_{f^*}^n)\}
    \le 1,
\end{align*}
Fubini's theorem gives
\begin{align*}
\begin{aligned}
    \mathbb E_{f^*}^n
    \int_{E_j}
    \exp\{-\rho r_n(\mc C(\theta),f^*)\}
    \tilde\Pi(d\theta)
    &\le
    \tilde\Pi(E_j).
\end{aligned}
\end{align*}
Because the augmented prior factorizes across layers, let
\[
\Xi_{j,n}
=
\int_{\mc F_j}
\exp\{-\eta_{j,n}\|g-\id\|_{\ell^p(\tilde\mu)}\}\Pi(dg)
\]
be the normalizing constant of its $j$th marginal. The tail numerator satisfies
\begin{align*}
\int_{\|g-\id\|_{\ell^p(\tilde\mu)}>t_{j,n}}
\exp\{-\eta_{j,n}\|g-\id\|_{\ell^p(\tilde\mu)}\}\Pi(dg)
\le \exp\{-\eta_{j,n}t_{j,n}\}
=\exp\{-M n\epsilon_n^2\}.
\end{align*}
 Then
\[
\tilde\Pi(E_j)
=
\frac{
\int_{\|f_j-\id\|_{\ell^p(\tilde\mu)}>t_{j,n}}
\exp\{-\eta_{j,n}\|f_j-\id\|_{\ell^p(\tilde\mu)}\}\Pi(df_j)}
{\Xi_{j,n}}.
\]
For $
B_{j,n}=\{g:\|g-\id\|_{\ell^\infty(\tilde\mu)}\le\tilde\epsilon_n\}
$, 
Assumption~\ref{assump:layer regular} gives
\[
\Xi_{j,n}
\ge
e^{-d^{1/p}\eta_{j,n}\tilde\epsilon_n}\Pi(B_{j,n})
\ge
\exp\{-C n\epsilon_n^2\},
\]
because $\eta_{j,n}\tilde\epsilon_n\lesssim n\epsilon_n^2$ and
$\tilde\epsilon_n^{-d/\alpha}\lesssim n\epsilon_n^2$. Hence
\[
\tilde\Pi(E_j)\le \exp\{-(M-C)n\epsilon_n^2\}.
\]
On the event \(\Omega_n\),
\begin{align*}
\begin{aligned}
    \tilde\Pi_\rho(E_j\mid\D_n)
    &\le
    \exp\{C_D n\epsilon_n^2\}
    \int_{E_j}
    \exp\{-\rho r_n(\mc C(\theta),f^*)\}
    \tilde\Pi(d\theta).
\end{aligned}
\end{align*}
Taking expectation and using the preceding numerator bound,
\begin{align*}
\begin{aligned}
    \mathbb E_{f^*}^n
    \left[
        \tilde\Pi_\rho(E_j\mid\D_n)
        \mathbf 1_{\Omega_n}
    \right]
    &\le
    \exp\{C_D n\epsilon_n^2\}
    \tilde\Pi(E_j) \\
    &\le
    \exp\{-(M-C-C_D)n\epsilon_n^2\}.
\end{aligned}
\end{align*}
For a sufficiently large fixed constant \(M\) and \(n\epsilon_n^2\to\infty\), the right-hand side
converges to zero. Also \(\mathbb P_{f^*}^n(\Omega_n^c)\to0\). Hence
\begin{align*}
    \mathbb E_{f^*}^n
    \tilde\Pi_\rho(E_j\mid\D_n)
    \to0,
\end{align*}
and Markov's inequality yields
\begin{align*}
    \tilde\Pi_\rho(E_j\mid\D_n)
    \overset{\mathbb P_{f^*}^n}{\longrightarrow}0.
\end{align*}
The proof is complete.
\end{proof}

\section{Supplement and Proofs for Section \ref{sec:apply theory}}\label{sec:proof corollaries}
\subsection{The Correctly Specified H\"{o}lder Class}\label{cor:holder correct}
We state a corollary regarding applying our main theorems to the H\"{o}lder class, where the prior regularity correctly matches that of the true data-generating function.
\begin{corollary}[H\"{o}lder smooth, correctly specified]\label{eg:Holder correct}
Let $f^* \in C^\beta(\X)_1 $ for $\beta\ge 1$, and let the GP prior for all layers be the Mat\'{e}rn
process with regularity $\beta$. Then $f^*$ can be optimally estimated with a
single GP layer at the rate $n^{-\frac{\beta}{2\beta+d}}$. Choose  $n^{\frac{d}{2\beta+d}} \ll \eta_{j,n} \lesssim n^{\frac{\beta+d}{2\beta+d}}$ for all $1\le j\le J-1$ and $\eta_{J,n}\lesssim n^{\frac{d}{2\beta+d}}$ for the final layer.
Then, except for the final layer, all other layers contract to the identity mapping
under the fractional augmented posterior. Moreover, the DGP recovers the optimal posterior contraction rate $n^{-\frac{\beta}{2\beta+d}}$.
\end{corollary}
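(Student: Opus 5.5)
The plan is to apply Theorems~\ref{thm:contraction} and~\ref{thm:parsimonious} with $J_0=J$: only the final layer is active, and every layer $j<J$ is redundant. I will take $\alpha=\beta$ in Assumption~\ref{assump:layer regular}. By \eqref{eq:matern centered small ball}, the Mat\'ern-$\beta$ small ball exponent on the compact set $\tilde\X$ is $\epsilon^{-d/\beta}$. The coordinate maps $\mr{id}_k$ need bounded RKHS norm. I will obtain this by multiplying $x[k]$ by a smooth compactly supported cutoff equal to one on $\tilde\X$; the result is a Schwartz-type function, so it lies in the Sobolev space $H^{\beta+d/2}(\mathbb R^d)$, which is the Mat\'ern RKHS. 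With $\epsilon_n=n^{-\beta/(2\beta+d)}$ we have $n\epsilon_n^2=n^{d/(2\beta+d)}$.

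For Assumption~\ref{assump:proxy regular}, I will take the proxy $f_n^*$ to be a fixed $C^\beta$ extension of $f^*$ to a neighborhood of $\tilde\X$ (bounded by one, say). Since $\beta\ge1$, this extension is Lipschitz. Condition \eqref{eq:proxy lipschitz} therefore holds with $\tilde\epsilon_n\asymp\epsilon_n$, and $\tilde\epsilon_n^{-d/\beta}=n^{d/(2\beta+d)}=n\epsilon_n^2$, as required.

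For Assumption~\ref{assump:prior mass} with $J_0=J$, only the single-layer condition $\ln\Pi(\|f_J-f_n^*\|_{\ell^\infty(\tilde\mu)}\le\epsilon_n)\gtrsim-n\epsilon_n^2$ is needed, since no penalized active layers remain. This is the standard Mat\'ern decentered small ball bound \eqref{eq:matern decentered small ball} with $\tau=1$ and $\alpha=\beta$: it gives $\epsilon_n^{-d/\beta}+\epsilon_n^{-d/\beta}\asymp n\epsilon_n^2$.

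The tuning window then reads
\[
n^{\frac{d}{2\beta+d}}=n\epsilon_n^2\ll\eta_{j,n}\lesssim n\epsilon_n^2/\tilde\epsilon_n=n^{\frac{\beta+d}{2\beta+d}},
\]
which matches the stated choice. The condition on $\eta_{J,n}$ is vacuous because $R_J\equiv1$. Theorem~\ref{thm:contraction} then gives contraction at rate $\epsilon_n$ in R\'enyi divergence, and in $\ell^2(\mu_X)$ if Assumption~\ref{assump:bounded} is imposed. Theorem~\ref{thm:parsimonious} gives, for each $j<J$, that $\|f_j-\id\|_{\ell^p(\tilde\mu)}\le Mn\epsilon_n^2/\eta_{j,n}$ with posterior probability tending to one. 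Since $\eta_{j,n}\gg n\epsilon_n^2$, this radius tends to zero, so those layers contract to the identity. The rate $n^{-\beta/(2\beta+d)}$ is minimax for $C^\beta$, which justifies calling it optimal.

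The main obstacle is the decentered small ball step on $\tilde\X$ rather than $\X$. The estimate \eqref{eq:matern decentered small ball} needs $f_n^*\in C^\beta$ on $\tilde\X$, which is why a $C^\beta$ extension of $f^*$ must be assumed or constructed (e.g.\ by Whitney/Stein extension for a cube $\X$). A secondary point is verifying the RKHS membership of the cut-off identity coordinates, which I would handle via the Sobolev characterization of the Mat\'ern RKHS. Finally, the $\eta$ ordering $\eta_{1,n}\ge\dots\ge\eta_{J-1,n}\ge1$ is compatible with the window, so no further constraints arise.
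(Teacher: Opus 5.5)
Your proposal is correct and follows the paper's proof essentially step for step. You take $J_0=J$, $\alpha=\beta$, $\epsilon_n=n^{-\beta/(2\beta+d)}$ and $\tilde\epsilon_n=c_0\epsilon_n$ (with $c_0$ small enough that the Lipschitz bound yields the proxy condition, as the paper does), verify the single-layer prior mass via the decentered Mat\'ern small ball bound with $\tau=1$, and read off the window $n\epsilon_n^2\ll\eta_{j,n}\lesssim n\epsilon_n^2/\tilde\epsilon_n$ from Theorems~\ref{thm:contraction} and~\ref{thm:parsimonious}. You also spell out two points that the paper's short proof (which simply sets $f_n^*=f^*$) takes for granted: extending $f^*$ to a $C^\beta$ function on $\tilde\X$, and checking that the cut-off coordinate maps lie in the Mat\'ern RKHS.
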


\begin{proof}[Proof of Corollary \ref{eg:Holder correct}]
Let $\epsilon_n=n^{-\beta/(2\beta+d)}$ and $\tilde\epsilon_n=c_0\epsilon_n$ for some sufficiently small constant $c_0$. Then $\tilde{\epsilon}_n^{-d/\beta} \asymp n\epsilon_n^2$. Let $f_n^* = f^*$. Since $\beta\ge1$, the function $f^*$ is Lipschitz and Assumption \ref{assump:proxy regular} holds. To prove Assumption \ref{assump:prior mass}, let $J_0=J$, then it suffices to prove 
\[
\ln \Pi(\|Z_J-f^*\|_{\ell^\infty(\tilde\mu)}<\epsilon_n)
\gtrsim -n\epsilon_n^2,
\]
which follows equation~\eqref{eq:matern decentered small ball}. The tuning window of $\eta_{j,n}$ follows \eqref{eq:eta-window-intuition} and $\epsilon_n \asymp \tilde \epsilon_n = n^{-\frac{\beta}{2\beta+d}}$.
\end{proof}

\subsection{Proof for Corollaries in Section \ref{sec:apply theory}}
In all the corollaries of Section \ref{sec:apply theory}, the layerwise GP priors are set to be Mat\'{e}rn processes, so Assumption \ref{assump:layer regular} is satisfied. Assumption \ref{assump:bounded} is not proved, as it does not hold for Mat\'{e}rn. In practice, one can choose to artifically apply a bound on the final layer, or leave it be and obtain R\'{e}nyi divergence contraction. Our efforts center on verifying Assumption \ref{assump:proxy regular}, \ref{assump:prior mass}, and equation \eqref{eq:eta-window-intuition}, \eqref{eq:eta range} for the tuning parameters $\eta_{j,n}$. The former is mainly proved by construction, utilizing the preliminary result of Lemma \ref{lem:DGP small ball}; the latter is simply attained by choosing some appropriate $\eta_{j,n}$, which is usually quite straightforward once some proper $\tilde{\epsilon}_n$ and $\epsilon_n$ are identified.

\begin{proof}[Proof of Corollary \ref{eg:Holder over}]
Identify $f^*$ with its assumed H\"older extension to a fixed neighborhood of $\tilde\X$. Multiplying that extension by a smooth cutoff yields a compactly supported global extension agreeing on a smaller fixed neighborhood of $\tilde\X$. The rescaled Mat\'ern bounds below are used only for $\tau_n\ge1$, with constants uniform on that fixed cube. For the oversmoothed case, we choose $f^*_n = f^*$, $\tilde{\epsilon}_n = n^{-\frac{\alpha}{2\beta+d}}/\sqrt{d}$ and $\epsilon_n = n^{-\frac{\beta}{2\beta+d}}$. Since the extension of $f^*$ has a fixed H\"older-$\beta$ norm, for all sufficiently large $n$ we have
\begin{align*}
 \sup_{\|x-x'\|_\infty\le \tilde\epsilon_n} |f^*_n(x) - f^*_n(x')| \le C(\sqrt{d}\tilde\epsilon_n)^{\min\{\beta,1\}} = C n^{-\frac{\alpha\min\{\beta,1\}}{2\beta+d}} \le n^{-\frac{\beta}{2\beta+d}} = \epsilon_n.
\end{align*}
    Thus Assumption \ref{assump:proxy regular} is satisfied. We need at least two active GP layers to satisfy Assumption \ref{assump:prior mass}. For notational simplicity, we set $J=2$. In the case of $J\ge 3$, the following procedure can be applied to the last two layers to reach the same conclusion.
    The proof is done via construction. Let $\tau_n = n^{\frac{\alpha-\beta}{(2\beta+d)\alpha}}$. Define functions $\bar{f}_1$, $\bar{f}_2$ as:
    \begin{align*}
\bar{f}_1(x) = \tau_n x,
\end{align*}
    \begin{align*}
\bar{f}_2(x) = f^*(\tau_n^{-1} x).
\end{align*}
    Let
    $\X_{2,n}=\tau_n\tilde\X+[-2\tilde\epsilon_n,2\tilde\epsilon_n]^d$.
    Define $B_1$ and $B_2$ by
    \begin{align*}
    B_1 &= \{f_1\in\mathcal{F}_1: \|f_1-\bar{f}_1\|_{\ell^\infty(\tilde{\mu})} \le \tilde\epsilon_n  \},\\
    B_2 &= \{f_2\in\mathcal{F}_2: \|f_2-\bar{f}_2\|_{\ell^\infty(\X_{2,n})} \le C_0\epsilon_n  \},
\end{align*}
    where $C_0>0$ is a sufficiently large fixed constant. Since the extension of $f^*$ has bounded H\"older-$\beta$ norm on a fixed neighborhood of $\tilde\X$,
    \begin{align*}
    \sup_{\substack{x,x'\in\X_{2,n}\\ \|x-x'\|_\infty<2\tilde\epsilon_n }} |\bar f_2(x) - \bar{f}_2(x')|
    \le C(2\sqrt{d}\,\tau_n^{-1}\tilde\epsilon_n)^{\min\{\beta,1\}}
    \lesssim \epsilon_n.
\end{align*}
    Hence $C_0$ can be chosen so that Lemma \ref{lem:DGP small ball} applies with $\epsilon_1=\tilde\epsilon_n$ and $\epsilon_2=C_0\epsilon_n$, giving
    \begin{align*}
    \ln \Pi(\|f_2\circ f_1  - \bar{f}_2 \circ \bar{f}_1  \|_{\ell^\infty(\tilde{\mu})} <2C_0\epsilon_n) \ge \ln \Pi(B_1) + \ln \Pi(B_2).
\end{align*}
    For the prior mass of $B_1$, choose fixed $h_k\in C_c^\infty(\mathbb R^d)$ with $h_k(x)=x[k]$ on $\tilde\X$. These are ambient Mat\'ern RKHS elements. Apply shifting to $\tau_n h_k$, which agrees with $\bar f_1[k]$ on $\tilde\X$ (the global linear map itself is not an ambient RKHS element). With $Z_1$ denoting the GP prior on each coordinate of layer $1$,
    \begin{align*}
        -\ln\Pi(B_1)
        \le & \frac{\tau_n^2}{2}\sum_{k=1}^d\|h_k\|_{\mathcal{H}_1}^2 - d \ln \Pi(\|Z_1\|_{\ell^\infty(\tilde\X)}<\tilde\epsilon_n ) \\
        \lesssim & \tau_n^2 + \tilde\epsilon_n ^{-d/\alpha} \asymp n^{\frac{d}{2\beta+d}},
\end{align*}
    where the last equation utilizes $2(\alpha-\beta)/\alpha \le d $, which always holds if $d\ge 2$ and holds for $\alpha\le2\beta$ if $d=1$.
    For the prior mass of $B_2$, let $\tilde\X^+$ be a fixed compact neighborhood of $\tilde\X$ on which the stated extension of $f^*$ is H\"{o}lder-$\beta$. Since $\X_{2,n}/\tau_n\subset\tilde\X^+$ for all sufficiently large $n$, the standard rescaled Mat\'{e}rn bound on the fixed set $\tilde\X^+$ gives
    \begin{align*}
    -\ln\Pi(B_2)
     \lesssim \tau_n^d \epsilon_n^{-d/\alpha} + \tau_n^{-2\alpha} \epsilon_n^{-\frac{2(\alpha-\beta)+d}{\beta}}
     \asymp n^{\frac{d}{2\beta+d}},
\end{align*}
    where fixed factors such as $C_0$ do not affect the order. Therefore,
    \begin{align*}
\ln \Pi(\|f_2\circ f_1  - \bar{f}_2 \circ \bar{f}_1  \|_{\ell^\infty(\tilde{\mu})} <2C_0\epsilon_n ) \gtrsim  -n^{\frac{d}{2\beta+d}}.
\end{align*}
    Absorbing the fixed factor $2C_0$ into the rate constant, Assumption \ref{assump:prior mass} is satisfied with rate $\epsilon_n = n^{-\frac{\beta}{2\beta+d}}$, by choosing $\eta_{1,n}\lesssim n \epsilon_n^2 / \tau_n \asymp n^{\frac{(d-1)\alpha+\beta}{(2\beta+d)\alpha} }$.

    For the case that $J>2$, the above procedure applies to the last two layers. In addition, choose $n^{\frac{d}{2\beta+d}} \ll \eta_{j,n} \lesssim n^{\frac{\alpha+d}{2\beta+d}} $ for $j\le J-2$; the strict lower order makes the layerwise radius vanish, while the upper order satisfies \eqref{eq:eta-window-intuition}.
\end{proof}

We first prove corollary \ref{cor:discont}, during which we make some technical preparations.
\begin{proof}[Proof of Corollary~\ref{cor:discont}]
For notational simplicity, we first consider the case that $J = J^*$. The case where $J>J^*$ will be dealt at the end of the proof.

The idea of estimating a discontinuous function $f^*$ in fractional posterior is to find a continuous $f^*_n$ that is sufficiently close to $f^*$ in $\ell^2$ norm. Such $f^*$ inevitably bears huge RKHS norm (if not completely outside the support of RKHS), making it unsuitable to estimate with a single GP. However, through delicate construction, it is possible that $f^*$ admits a decomposition
\begin{align*}
f^*_n = \bar f_J \circ \bar f_{J-1} \circ \cdots \circ \bar f_1,
\end{align*}
such that each $\bar{f}_j, 1\le j\le J$ has controllable RKHS norm and sufficient (single-layer) prior mass in its neighborhood. To begin with, let us introduce an odd, nondecreasing function $\phi: [-1,1] \to [-1,1]$, such that:
\begin{itemize}
    \item $\phi$ is at least $\lceil \alpha+1/2 \rceil$th differentiable,
    \item $\phi(x)=-1$ for $x\in[-1,-1/2]$,
    \item $\phi(x)=1$ for $x\in[1/2, 1]$,
    \item $|\phi(x)|\ge 2|x|$ for $x\in[-1/2,1/2]$,
    \item the restricted Mat\'{e}rn RKHS norm of $\phi$ is finite.
\end{itemize}
In other words, $\phi$ is a sufficiently differentiable function that goes from $-1$ to $1$ on $[-1/2,1/2]$, while being constants for the remaining parts. We can construct $\phi$ by integrating an even smooth bump supported on $[-1/2,1/2]$ that decreases on $(0,1/2)$, and normalizing its range to $[-1,1]$. The specific form of $\phi$ does not affect our proof.

For all $\tau\ge 1$, we define a rescaled version of the above mentioned function $\phi$ as:
\begin{align*}
  \phi^\tau(x) = \left\{
\begin{aligned}
& \phi(\tau x), \;\; && |x|\le 1/\tau,\\
& -1, \;\; && x<-1/\tau, \\
& 1, \;\; && x> 1/\tau.
\end{aligned}\right.
\end{align*}
In other words, $\phi^\tau(\cdot)$ ``squeezes'' the input domain of $\phi(\cdot)$ by a factor of $\tau$, and defines the remaining parts by constant extension. Fix $K=[-R,R]$ with $R\ge2$ and $\tilde\X\subset(-R,R)$, and choose $\chi\in C_c^\infty(\mathbb R)$ equal to one on $K$. Use $\bar f_j=\chi\phi^{\tau_j}$ as the ambient RKHS extensions. Writing $s=\alpha+1/2$, Fourier scaling of the derivative of $\chi\phi^\tau$ gives, for an integer $N>s$, 
\begin{equation}\label{eq:sobolev tau}
\|\chi\phi^\tau\|_{\mathcal H}^2
\asymp \|\chi\phi^\tau\|_{H^s(\mathbb R)}^2
\lesssim 1+\int_1^\infty \omega^{2s-2}(1+\omega/\tau)^{-2N}\,d\omega \lesssim 1+\tau^{2\alpha}.
\end{equation}
Then the derivative is the sum of a fixed smooth function supported outside $K$ and $\tau\phi'(\tau\,\cdot)$. On $K$, the extension agrees with $\phi^\tau$ and takes values in $[-1,1]$.
Moreover, for all $x\in[-1,1]$, we have either $|\phi^\tau(x)|=1$ (for $|x|>1/(2\tau)$) or $|\phi^\tau(x)|\ge 2\tau |x|$ (for $|x|\le 1/(2\tau)$).
Let us define $f_n^*$ as the composition of $J$ rescaled version of $\phi$ as
\begin{align*}
f_n^* = \phi^{\tau_J} \circ \phi^{\tau_{J-1}} \circ \cdots \circ \phi^{\tau_1}.
\end{align*}
By the analysis above, for all $x\in[-1,1]$, we either have $|f_n^*(x)|=1$, or $|f_n^*(x)|\ge \prod_{j=1}^J (2\tau_j) |x|$, which can only happen for $|x|\le 1/  \prod_{j=1}^J (2\tau_j) $. Therefore, we can bound the $\ell^2(\mu_X)$ norm between $f^*$ and $f_n^*$ as
\begin{align*}
\|f^* - f_n^*\|^2_{\ell^2(\mu_X)} \le \int_{ -[\prod_{j=1}^J (2\tau_j)]^{-1}}^{ [\prod_{j=1}^J (2\tau_j)]^{-1}} 4\,d\mu_X(x) \le C_X 2^{-J+3} \left(\prod_{j=1}^J \tau_j\right)^{-1},
\end{align*}
where $C_X$ bounds the design density. Thus, a sufficient condition for $\|f^* - f_n^*\|_{\ell^2(\mu_X)}\le \epsilon_n$ is
\begin{equation}\label{eq:discont proxy close}
\prod_{j=1}^J \tau_j \ge C_X 2^{-J+3} \epsilon_n^{-2}.
\end{equation}
Each $\phi^\tau_j$ is Lipschitz continuous with constant $C\tau_j$, with $C$ an absolute constant determined by the construction of $\phi$. Thus a sufficient condition for $\sup_{\|x-x'\|_\infty< \tilde{\epsilon}_n} | f^*_n(x) -  f^*_n(x')| \le \epsilon_n$ is
\begin{equation}\label{eq:discont proxy cont}
\tilde{\epsilon}_n \prod_{j=1}^J (C\tau_j) \le \epsilon_n.
\end{equation}
In addition, we also need
\begin{equation}\label{eq:discont 2 rates}
\tilde{\epsilon}_n^{-1/\alpha} \lesssim n\epsilon_n^2.
\end{equation}
The equations \eqref{eq:discont proxy close}, \eqref{eq:discont proxy cont} and \eqref{eq:discont 2 rates} jointly provide a sufficient condition for Assumption \ref{assump:proxy regular}. In order to satisfy Assumption \ref{assump:prior mass}, we need to specifically construct the set with enough prior mass. Specifically, define set $A_j, 1\le j\le J$ as
\begin{align*}
A_j = \{f_j\in\mathcal{F}_j: \|f_j - \bar f_j\|_{\ell^\infty(K)} < c_0\tilde{\epsilon}_n\}.
\end{align*}
Here $c_0>0$ is a sufficiently small fixed constant. Recaling Lemma \ref{lem:DGP small ball}, since $\tilde{\epsilon}_n \prod_{j'=j}^J (C\tau_{j'}) \le \epsilon_n$ for all $j=1, 2, \cdots, J$, all perturbed intermediate images stay in $K$ for large $n$, since each layer is close on $K$ to a map taking values in $[-1,1]$. Propagating the layer errors gives
\begin{align*}
\sup_{\theta\in \prod_{j=1}^J A_j}\|\mathcal C(\theta)-f_n^*\|_{\ell^\infty(\tilde\X)}
&\le c_0\tilde\epsilon_n\sum_{j=1}^J\prod_{k=j+1}^J(C\tau_k)\\
&\le c_0J\tilde\epsilon_n\prod_{k=1}^J(C\tau_k)
\le\epsilon_n,
\end{align*}
after taking $C\ge1$ and $c_0\le1/J$.
Thus, it suffices to bound the prior mass on $\prod_{j=1}^J A_j$. In view of equations \eqref{eq:decenter RKHS} and \eqref{eq:matern centered small ball}, we have
\begin{align*}
\ln\Pi(A_j) \gtrsim - \frac{1}{2}\|\bar f_j\|_{\mc{H}_j}^2 - \tilde{\epsilon}_n^{-1/\alpha} \gtrsim -\tau_j^{2\alpha} - \tilde{\epsilon}_n^{-1/\alpha}.
\end{align*}
Since we already requires \eqref{eq:discont 2 rates}, Assumption \ref{assump:prior mass} is satisfied if
\begin{equation}\label{eq:discont mass decenter}
    \tau_j^{2\alpha} \lesssim n\epsilon_n^2, \;\;\forall 1\le j \le J
\end{equation}
and
\begin{equation}\label{eq:discont eta}
    \tau_j \eta_{j,n} \lesssim n\epsilon_n^2, \;\;\forall 1\le j \le J.
\end{equation}

Jointly solve the inequalities \eqref{eq:discont proxy close} to \eqref{eq:discont mass decenter} yield the feasible spaces for $\tau_j, \forall j$ and $\tilde{\epsilon}_n, \epsilon_n$. This is by no means an easy task. Instead, we impose some restrictions. First, since all the inequalities are symmetric with respect to $j$, we enforce $\tau_1=\tau_2 =\cdots =\tau_J \triangleq\tau$. Second, as long as \eqref{eq:discont 2 rates} is satisfied, there is no downside in having a smaller $\tilde\eps_n$. Hence we let the equality in \eqref{eq:discont 2 rates} hold and substitute  $\tilde\eps_n$ with expressions involving $\epsilon_n$. Therefore, the previously mentioned four equations yield the following three bounds for $\tau$:
\begin{align*}
\left\{
\begin{aligned}
&\tau^J \ge {C_X} 2^{-J+3}\epsilon_n^{-2},\\
&\tau^J \le C^{-J} n^\alpha \epsilon_n^{2\alpha+1},\\
&\tau^{2\alpha}\lesssim n\epsilon_n^2.
\end{aligned}\right.
\end{align*}
In order for the feasible set of $\tau$ to be non-empty, we require both upper bounds to be greater than the lower bound, which yield
\begin{equation}\label{eq:discont eps}
\epsilon_n \gtrsim n^{-\frac{\alpha}{2\alpha+3}} + n^{-\frac{J}{2J+4\alpha}} \asymp n^{-\frac{\alpha}{2\alpha+3}}, \quad \tilde\epsilon_n \asymp n^{-\frac{3\alpha}{2\alpha+3}}.
\end{equation}
where the last formula comes from $J \ge \frac{4}{3}\alpha^2$. Take $\epsilon_n=L n^{-\alpha/(2\alpha+3)}$, $\tau=(C_X2^{-J+3})^{1/J}\epsilon_n^{-2/J}$, and $\tilde\epsilon_n=\epsilon_n/[2(C\tau)^J]$, with a sufficiently large fixed $L$. These choices satisfy the displayed bounds.
Finally, notice elements in each $A_j$ have bounded $\ell^p$ norms, hence we can choose
\begin{align*}
 \eta_{j,n} \tau \lesssim n\epsilon_n^2, \quad \text{hence}\quad \eta_{j,n} \lesssim n^{\frac{3J^*-2\alpha}{(2\alpha+3)J^*}}.
\end{align*}
This concludes the proof for the case when $J = J^*$. If $J>J^*$, the above proof procedure holds for the last $J^*$ without modification. For the first $J-J^*$ layers, simply choosing $n^{\frac{3}{2\alpha+3}} \ll \eta_{j,n} \lesssim n^{\frac{3\alpha+3}{2\alpha+3}}$ to satisfy equation \eqref{eq:eta-window-intuition}.
\end{proof}

We now prove Corollary~\ref{eg:non-diff}, where we make use of the $\phi$ function defined before.
\begin{proof}[Proof of Corollary~\ref{eg:non-diff}]
We prove for the case that $J=1$. If $J>1$, choosing $\eta_{j,n}$ as in the statement for $j\le J-1$ satisfies equation \eqref{eq:eta-window-intuition}. To begin with, let us recall $\phi$ and $\phi^\tau$ defined in the proof of Corollary \ref{cor:discont}. Define a constant $C_{int}$ as
$$C_{int} = - \int_{-1/2}^0 \phi(x) dx,$$
then $C_{int}\in(0,1/2)$ and is fixed once the choice of $\phi$ is fixed. Now define function $f_n^*(x), \forall x\in[-1,1]$ as
$$f_n^*(x) = -\frac{1/2-C_{int}}{\tau} - \int_0^x \phi^\tau(w) dw.$$
By the definition of  $\phi^\tau$, it is easy to see $f_n^*(0)=- (1/2-C_{int})/\tau$, is an even function, and is increase on $[-1,0]$. Moreover, 
$$
f_n^*(-1/(2\tau)) = -\frac{1/2-C_{int}}{\tau} + \int_{-1/(2\tau)}^0 \phi(\tau w) dw = -\frac{1/2-C_{int}}{\tau} - \frac{C_{int}}{\tau} = -\frac{1}{2\tau}.
$$
Noticing $\phi^\tau(x)=-1$, then $\forall x\in [-1, -1/(2\tau)]$, we have $f_n^*(x) = f^*(x)$. For $x\in[-1/(2\tau),0]$, both $f^*(x)$ and $f^*_n(x)$ are increasing functions taking values in $[-1/(2\tau),0]$. Thus we have
$$
\|f^*-f_n^*\|_{\ell^2(\mu_X)}^2 \le 2C_X \int_{-1/(2\tau)}^0 [f^*(x)-f_n^*(x)]^2 dx \le C_X[1/(2\tau)]^2 \cdot 2\cdot [1/(2\tau)] \le C_X/(4\tau^3).
$$
Here $C_X=\|d\mu_X/dx\|_\infty$.
We now consider Assumption \ref{assump:proxy regular}. Choosing $\tilde{\epsilon}_n = \epsilon_n = n^{-\frac{\alpha}{2\alpha+1}}$, then it is easy to $\tilde{\epsilon}_n^{-d/\alpha} = n^{\frac{1}{2\alpha+1}} = n \epsilon_n^2$. Also notice the derivative of $f_n^*(x)$ is just $-\phi^\tau(x)$, with its absolute value bounded by $1$. Thus $f_n^*$ is Lipschitz-1. Therefore, the Assumption \ref{assump:proxy regular} is satisfied as long as
\begin{equation}\label{eq:proxy close nondiff}
C_X/(4\tau^3) \le \epsilon_n^2 .
\end{equation}
We proceed to consider Assumption \ref{assump:prior mass}. Since we only have one active layer, it is simply the prior mass of Mat\'{e}rn process. Noticing $f_n^*(x)\in[-1,0]$ for all $x\in[-1,1]$, and the derivative of $f_n^*(x)$ is just $-\phi^\tau(x)$ and applying the Fourier argument in \eqref{eq:sobolev tau} to the second derivative, we have, for $\chi$ as above and $f_n^*$ extended by the same formula to $\mathbb R$, 
$$
\|\chi f_n^*\|_{\mc H}^2 \lesssim 1+\ln\tau+ \tau^{2(\alpha-1)}.
$$
Then, in view of equations \eqref{eq:matern centered small ball} and \eqref{eq:decenter RKHS}, 
$$\ln \Pi(\|f_J-f_n^*\|_{\ell^\infty(\tilde\mu)}\le \epsilon_n) \gtrsim - \frac{1}{2}\|\chi f_n^*\|_{\mc H}^2 - \epsilon_n^{-1/\alpha} \gtrsim -1-\ln\tau - \tau^{2(\alpha-1)} - n^{\frac{1}{2\alpha+1}}.$$
In order to satisfy Assumption \ref{assump:prior mass}, we need 
\begin{equation}\label{eq:prior mass nondiff}
1+\ln\tau+\tau^{2(\alpha-1)} \lesssim n\epsilon_n^2.
\end{equation}
Combining equations \eqref{eq:prior mass nondiff} and \eqref{eq:proxy close nondiff}, taking the values $\tau = c_\tau n^{\frac{1}{2\alpha+1}}$ and $\epsilon_n = n^{-\frac{\alpha}{2\alpha+1}}$, with $c_\tau\ge\max\{1,(C_X/4)^{1/3}\}$ and $0<\alpha\le3/2$, we finish the proof.
\end{proof}

\begin{proof}[Proof of Corollary \ref{cor:intrinsic}] Choose $f_n^*=f^*$, $\tilde{\epsilon}_n = n^{-\frac{\alpha}{2\alpha+2}}/\sqrt{2}$, and $\epsilon_n \asymp n^{-\frac{\beta}{2\beta+1}} = n^{-\frac{1}{3}}$. It is straightfoward to see $\tilde\epsilon_n^{-d/\alpha} \asymp n^{\frac{d}{2\alpha+d}}\lesssim n^{\frac{1}{3}} \asymp n\epsilon_n^2$. Since $f_n^*$ belong to H\"{o}lder space with regularity $\beta=1$, $f_n^*$ is Lipschitz, hence Assumption \ref{assump:proxy regular} holds. It remains to prove Assumption \ref{assump:prior mass}. Without loss of generality, we do the construction for the case of $J=2$, with the consensus that the same procedure can be applied to the last two layers if $J\ge 3$.

Define functions $\bar{f}_1$, $\bar{f}_2$ as:
    \begin{align*}
\bar{f}_1(x[1],x[2]) = ((x[1]+x[2])/2,0),
\end{align*}
$$
\bar{f}_2(x[1],x[2]) = (g^*(x[1]),0).
$$
Then it is easy to verify that $\bar{f}_2 \circ \bar{f}_1 = f^*$.
Define $B_1$ as a set of mappings from $\mathbb{R}^d$ to itself, and $B_2$ as a set of functions from $\mathbb{R}^d$ to $\mathbb{R}$, as
$$
B_1 = \{f_1\in\mathcal{F}_1: \|f_1-\bar{f}_1\|_{\ell^\infty(\tilde{\mu})} \le \tilde\epsilon_n  \}, 
$$
$$
    B_2 = \{f_2\in\mathcal{F}_2: \|f_2-\bar{f}_2\|_{\ell^\infty((1+\tilde\epsilon_n) \tilde{\X}) \cap \{|x[2]|\le\tilde\epsilon_n\} } \le \epsilon_n  \}.
$$
Since $g^*$ is a H\"{o}lder-$\beta$ smooth function with norm no greater than $1$, we have
\begin{align*}
\sup_{\|x-x'\|_\infty<2\tilde\epsilon_n } |\bar f_2(x) - \bar{f}_2(x')| 
\le & \sup_{\|x[1]-x'[1]\|_\infty<2\tilde\epsilon_n } |\bar g^*(x[1]) - g^*(x[1)| \le 2\tilde{\epsilon}_n \le 2\epsilon_n.
\end{align*}
Thus by Lemma \ref{lem:DGP small ball}, we have
\begin{equation}\label{eq:intrinsic merge}
    \ln \Pi(\|f_2\circ f_1  - \bar{f}_2 \circ \bar{f}_1  \|_{\ell^\infty(\tilde{\mu})} <4\epsilon_n) \ge \ln \Pi(B_1) + \ln \Pi(B_2).
\end{equation}
The prior mass for $B_1$ is just the 
standard prior mass for Mat\'{e}rn process, written as
\begin{equation}\label{eq:intrinsic B1}
\ln \Pi(B_1) \gtrsim -\tilde\epsilon_n^{-d/\alpha} \ge -n^{\frac{1}{2\beta+1}}. 
\end{equation}
To derive the prior mass for $B_2$, we need to introduce an auxiliary, degenerated GP. Specifically, let $(Z_X)_{X\in\mathbb{R}^2}$ be the original $\beta$-regular Mat\'{e}rn process on $\mathbb{R}^2$. Now define a degenerated version of $Z_X$ as
$$
\bar{Z}_{(X[1],X[2])} \triangleq Z_{(X[1],0)}, \forall X\in\mathbb{R}^2.
$$
In other words, $\bar{Z}_X$ is essentially the variant of $Z_X$ where $X$ is projected to the second coordinate axis. If $X$ is restricted to a small band near the second coordinate axis, it is reasonable to conjecture that $\bar{Z}_X$ and $Z_X$ are very close to each other, as in the following lemma.

\begin{lemma}\label{lem:Z degen}
For $\beta = 1$, for the process $Z_X$ and $\bar{Z}_X$ defined above, 
\begin{equation*}
\mathbb{P}\left(\|Z-\bar Z\|_{\ell^\infty((1+\tilde\epsilon_n) \tilde{\X} \cap \{|x[2]|\le\tilde\epsilon_n\} } \ge t \right) \le \exp\left[\frac{(t-C' \tilde{\epsilon}_n \ln(1/\tilde{\epsilon}_n))^2}{2C^2\tilde{\epsilon}^2_n \ln(1/\tilde{\epsilon}_n) }  \right].
\end{equation*}
\begin{proof}
Define $W_X = Z_X - \bar Z_X$, then $W_X$ is again a centered Gaussian process. We employ Lemma 8 and 9 of the supplementary materials of \cite{castillo2025deep} to control the sup norm of the process $W_X$. First, let $\lambda$ be the intrinsic semimetric
$$\lambda^2(X_1, X_2) = \mathbb{E}(W_{X_1}-W_{X_2})^2.$$
Then, for $\|X_1-X_2\|_2<1$, applying Lemma 1 of \cite{szabo2026vecchia} with Mat\'{e}rn regularity being $1$, we have two bounds for the intrinsinc semimetric
\begin{align*}
\lambda^2(X_1, X_2) 
= & \mathbb{E}(Z_{X_1} - Z_{(X_1[1],0)}-Z_{X_2} + Z_{(X_2[1],0)})^2 \\
\le & 2\mathbb{E}(Z_{X_1} - Z_{X_2})^2 + 2\mathbb{E}(Z_{(X_1[1],0)}- Z_{(X_2[1],0)})^2\\
\le & C^2 \|X_1-X_2\|_2^2 \ln(1/\|X_1-X_2\|_2),
\end{align*}
\begin{align}\label{eq:W var}
\lambda^2(X_1, X_2) 
= & \mathbb{E}(Z_{X_1} - Z_{(X_1[1],0)}-Z_{X_2} + Z_{(X_2[1],0)})^2 \nonumber \\
\le & 2\mathbb{E}(Z_{X_1} - Z_{(X_1[1],0)})^2 + 2\mathbb{E}(Z_{X_2} - Z_{(X_2[1],0)})^2 \nonumber \\
\le & C^2 [(X_1[1])^2 \ln(1/\|X_1\|_2) + (X_2[1])^2 \ln(1/\|X_2\|_2) ] \nonumber \\
\le & 2 C^2 \tilde{\epsilon}_n^2 \ln(1/\tilde{\epsilon}_n).
\end{align}
for some absolute constant $C$. In other words, for $\xi\in(0,1)$, if $\|X_1-X_2\|_2 \le C^{-1}\xi   (\ln(1/\xi))^{-1/2}$, then $\lambda^2(X_1, X_2) \le \xi$. Thus, without loss of generality, assuming $(1+\tilde\epsilon_n) \tilde{\X}\subset[-2,2]^2$\footnote{$\tilde\X$ only needs to be a compact set with $[-1,1]^2$ belonging to its interior, so this assumption holds trivialy.}, we have
$$\ln N(\xi, (1+\tilde\epsilon_n) \tilde{\X} \cap \{|x[2]|\le\tilde\epsilon_n\}, \lambda) \le \ln\frac{2\tilde{\epsilon}_n}{C^2\xi^2 / \ln(1/\xi)} = \ln(2\tilde\epsilon_n C^{-2}) + 2\ln(1/\xi) + \ln \ln (1/\xi).
$$
Noticing $W_X = 0$ for all $X$ satisfying $X[1]=0$, applying Lemma 9 of the supplementary materials of \cite{castillo2025deep}, one obtains
\begin{align}\label{eq:EW inf}
	\mathbb{E} \|W\|_{\ell^\infty((1+\tilde\epsilon_n) \tilde{\X}) \cap \{|x[2]|\le\tilde\epsilon_n\} }
\le & 0 + 4\sqrt{2} \int_0^{\sqrt{2}C\tilde{\epsilon}_n} \ln N(\xi, (1+\tilde\epsilon_n) \tilde{\X}) \cap \{|x[2]|\le\tilde\epsilon_n\}, \lambda) d\xi \nonumber\\
\le & C' \tilde{\epsilon}_n \ln(1/\tilde{\epsilon}_n),
\end{align}
for some absolute constant $C'$, where the integration make uses of the fact that $\ln\ln(1/\xi)<\ln(1/\xi)$ and $\int_0^x \ln(1/\xi) = x - x\ln x$. Combining equations \eqref{eq:W var}, \eqref{eq:EW inf} and Lemma 8 of the supplementary materials of \cite{castillo2025deep}, we have
$$
\mathbb{P}\left(\|W\|_{\ell^\infty((1+\tilde\epsilon_n) \tilde{\X} \cap \{|x[2]|\le\tilde\epsilon_n\} } \ge t \right) \le \exp\left[\frac{(t-C' \tilde{\epsilon}_n \ln(1/\tilde{\epsilon}_n))^2}{2C^2\tilde{\epsilon}^2_n \ln(1/\tilde{\epsilon}_n) } \right].
$$
\end{proof}
\end{lemma}

Apply Lemma \ref{lem:Z degen} with $t=\epsilon_n$ and notice $\epsilon_n \asymp n^{-1/3} >> n^{-\frac{\alpha}{2\alpha+2}} \ln(n) \asymp \tilde\epsilon_n \ln(1/\tilde\epsilon_n)$, one obtains, 
\begin{equation}\label{eq:Z Zbar diff}
\Pi\left(\|Z-\bar Z\|_{\ell^\infty((1+\tilde\epsilon_n) \tilde{\X} \cap \{|x[2]|\le\tilde\epsilon_n\} } \ge \epsilon_n \right) \le \exp\left[-C'' n^{\frac{\alpha}{2}-\frac{2}{3}} \right] \le \exp\left[-C'' n^{\frac{1}{3}} \right],
\end{equation}
where the last inequality utilizes $\alpha\ge 2$.
Equation \eqref{eq:Z Zbar diff} shows the process $Z$ and $\bar Z$ are almost the same, up to difference $\epsilon$ with high probability. The process $\bar Z$ is a degenerated process, sharing with its small probability with a uni-dimensional Mat\'{e}rn Gaussian process, which can be computed using the standard results of Mat\'{e}rn process as
\begin{equation}\label{eq:Z degen f2}
\ln \Pi(\|\bar{Z} - \bar f_2\|_{\ell^\infty((1+\tilde\epsilon_n) \tilde{\X})} <\epsilon_n) \gtrsim - \epsilon_n^{-1/\beta} = -n^{1/3}.
\end{equation} 
In view of \eqref{eq:Z Zbar diff} and \eqref{eq:Z degen f2}, by setting $\epsilon_n$ as some constant multiplier of $n^{-1/3}$, one can make sure 
\begin{align*}
\Pi(B_2) = & \Pi(\|Z-\bar f_2\|_{\ell^\infty((1+\tilde\epsilon_n) \tilde{\X} \cap \{|x[2]|\le\tilde\epsilon_n\} } \le \epsilon_n) \\
\ge & \Pi(\|\bar{Z} - \bar f_2\|_{\ell^\infty((1+\tilde\epsilon_n) \tilde{\X})} <\epsilon_n/2, \; \|Z-\bar Z\|_{\ell^\infty((1+\tilde\epsilon_n) \tilde{\X} \cap \{|x[2]|\le\tilde\epsilon_n\} } \le \epsilon_n/2) \\
\ge & \Pi\Big(\|\bar{Z} - \bar f_2\|_{\ell^\infty((1+\tilde\epsilon_n) \tilde{\X})} <\epsilon_n/2\Big) - \Pi\Big( \|Z-\bar Z\|_{\ell^\infty((1+\tilde\epsilon_n) \tilde{\X} \cap \{|x[2]|\le\tilde\epsilon_n\} } \ge \epsilon_n/2 \Big) \\
\ge & \exp\Big(-\frac{C''}{2}n^{-1/3}\Big).
\end{align*}
Thus, one reaches the conclusion that for $\epsilon_n \asymp n^{-\frac{\beta}{2\beta+1}}$, 
\begin{equation}\label{eq:intrinsic B2}
\ln\Pi(B_2) \gtrsim -n^{\frac{1}{2\beta+1}} = - n^{1/3}.
\end{equation}
Combining equations \eqref{eq:intrinsic merge}, \eqref{eq:intrinsic B1} and \eqref{eq:intrinsic B2}, we finish the verification of Assumption \ref{assump:prior mass}. Since the $\tilde\epsilon_n$ and $\epsilon_n$ are already given in closed form, the feasible ranges for the penalty parameters $\eta_{j,n}$ can be directly computed from \eqref{eq:eta-window-intuition} and \eqref{eq:eta range}.
\end{proof}

\section{Posterior Inference Algorithms}
The augmentation changes the target only through the additive term $\sum_{j=1}^{J-1}\ln R_j(f_j)$, so standard DGP variational or MCMC schemes can be reused with a simple penalty correction. Algorithms~\ref{alg:std-dgp-vi} and \ref{alg:aug-rho-dgp-vi} summarize the corresponding sparse-variational objectives for the unaugmented and augmented targets.

\begin{algorithm}[h!]
    \caption{Standard Deep GP with  Variational Inference (ELBO)}
    \label{alg:std-dgp-vi}
    \begin{algorithmic}[1]
        \Require Data $D_n=\{(x_i,y_i)\}_{i=1}^n$, depth $J$, $0<\rho\le1$, inducing sizes $M_j$, steps $T$, batch $B$, MC samples $S$
        \State Choose inducing inputs $Z_j\in\mathbb{R}^{M_j\times d_{j-1}}$ for each layer $j$
        \State Initialize variational factors $q(u_j)=\mathcal{N}(m_j,S_j)$ and hyperparameters
        \For{$t=1,\dots,T$}
        \State Sample minibatch $\mathcal{B}$ of size $B$
        \State $\widehat{\mathcal{L}} \gets 0$
        \For{$s=1,\dots,S$}
        \State $h^{(0)}_i \gets x_i$ for $i\in\mathcal{B}$
        \For{$j=1,\dots,J$}
        \State Sample $u_j^{(s)}\sim q(u_j)$
        \State Sample $h^{(j)}_i \sim p\!\left(f_j(h^{(j-1)}_i)\mid u_j^{(s)},Z_j\right)$ for all $i\in\mathcal{B}$
        \EndFor
        \State $\widehat{\mathcal{L}} \gets \widehat{\mathcal{L}} + \rho\frac{n}{B}\sum_{i\in\mathcal{B}}\ln p\!\left(y_i\mid h^{(J)}_i\right)$
        \EndFor
        \State $\widehat{\mathcal{L}} \gets \frac{1}{S}\widehat{\mathcal{L}} - \sum_{j=1}^J \mathrm{KL}\!\left(q(u_j)\,\|\,p(u_j)\right)$
        \State Update $\{m_j,S_j,Z_j\}$ and hyperparameters by stochastic gradient ascent on $\widehat{\mathcal{L}}$
        \EndFor
        \State \Return $q(u_{1:J})$ (use forward sampling for prediction)
    \end{algorithmic}
\end{algorithm}

\begin{algorithm}[t]
    \caption{Variational Inference for Augmented $\rho$-Fractional Posterior}
    \label{alg:aug-rho-dgp-vi}
    \begin{algorithmic}[1]
        \Require Data $D_n$, depth $J$, $0<\rho\le 1$, penalty weights $\eta_{j,n}$, norm order $p$, measure $\tilde\mu$,
        inducing sizes $M_j$, steps $T$, batch $B$, MC samples $S$, penalty points $M$
        \State Define $R_j(f_j)=\exp\!\left(-\eta_{j,n}\|f_j-\id\|_{\ell^p(\tilde\mu)}\right)$ for $j\le J-1$ and $R_J\equiv 1$
        \State Initialize inducing inputs $Z_j$ and variational factors $q(u_j)=\mathcal{N}(m_j,S_j)$
        \For{$t=1,\dots,T$}
        \State Sample minibatch $\mathcal{B}$ of size $B$ and penalty points $\{\tilde x_m\}_{m=1}^M\sim \tilde\mu$
        \State $\widehat{\mathcal{J}} \gets 0$
        \For{$s=1,\dots,S$}
        \Comment{Forward sample on data minibatch}
        \State $h^{(0)}_i \gets x_i$ for $i\in\mathcal{B}$
        \For{$j=1,\dots,J$}
        \State Sample $u_j^{(s)}\sim q(u_j)$
        \State Sample $h^{(j)}_i \sim p\!\left(f_j(h^{(j-1)}_i)\mid u_j^{(s)},Z_j\right)$ for $i\in\mathcal{B}$
        \EndFor
        \State $\widehat{\mathcal{J}} \gets \widehat{\mathcal{J}} + \rho\frac{n}{B}\sum_{i\in\mathcal{B}}\ln p\!\left(y_i\mid h^{(J)}_i\right)$
        \Comment{Layer penalties (estimate $\sum_{j\le J-1}\ln R_j$)}
        \For{$j=1,\dots,J-1$}
        \State Sample $f_j(\tilde x_m)$ for $m=1,\dots,M$ under the same $(u_j^{(s)},Z_j)$
        \State $\widehat{d}_{j}^{(s)} \gets \left(\frac{1}{M}\sum_{m=1}^M \|f_j(\tilde x_m)-\tilde x_m\|_p^p\right)^{1/p}$ \Comment{use $\max$ if $p=\infty$}
        \State $\widehat{\mathcal{J}} \gets \widehat{\mathcal{J}} - \eta_{j,n}\widehat{d}_{j}^{(s)}$
        \EndFor
        \EndFor
        \State $\widehat{\mathcal{J}} \gets \frac{1}{S}\widehat{\mathcal{J}} - \sum_{j=1}^J \mathrm{KL}\!\left(q(u_j)\,\|\,p(u_j)\right)$
        \State Update $\{m_j,S_j,Z_j\}$ and hyperparameters by stochastic gradient ascent on $\widehat{\mathcal{J}}$
        \EndFor
        \State \Return variational approximation to $\tilde\Pi_\rho(\theta\mid D_n)$ and layerwise $\widehat{d}_j$ (redundancy diagnostics)
    \end{algorithmic}
\end{algorithm}

\section{Synthetic Designs}
\subsection{1D Synthetic Designs}\label{subsec:synthetic-designs}
\begin{table}[h!]
\centering
\caption{One-dimensional stress tests comparing the regular DGP with the identity-augmented DGP. Entries are mean (standard deviation) over ten runs. Here ``unaug.'' and ``aug.'' denote unaugmented and augmented fits, respectively. Smaller is better for all reported diagnostics; bold indicates the better mean within each pair.}
\label{tab:experiments-1d-summary}
\small
\resizebox{\textwidth}{!}{%
\begin{tabular}{lcccccc}
\toprule
& \multicolumn{2}{c}{RMSE} & \multicolumn{2}{c}{$\Delta_{\id}$} & \multicolumn{2}{c}{$\Delta_{\mathrm{partial}}$}\\
\cmidrule(lr){2-3}\cmidrule(lr){4-5}\cmidrule(lr){6-7}
Design & unaug. & aug. & unaug. & aug. & unaug. & aug.\\
\midrule
\texttt{shallow\_oscillatory} & \textbf{0.078 (0.011)} & 0.079 (0.015) & 1.150 (0.092) & \textbf{0.377 (0.079)} & 1.211 (0.112) & \textbf{0.436 (0.169)} \\
\texttt{edge\_transition} & 0.076 (0.015) & \textbf{0.069 (0.014)} & 1.194 (0.076) & \textbf{0.363 (0.146)} & 1.173 (0.049) & \textbf{0.418 (0.202)} \\
\texttt{shallow\_bump} & \textbf{0.096 (0.020)} & 0.101 (0.016) & 1.176 (0.081) & \textbf{0.376 (0.114)} & 1.182 (0.078) & \textbf{0.379 (0.102)} \\
\texttt{multiscale\_sparse} & \textbf{0.101 (0.020)} & 0.102 (0.010) & 1.128 (0.132) & \textbf{0.347 (0.073)} & 1.158 (0.121) & \textbf{0.418 (0.128)} \\
\texttt{active\_penultimate\_warp} & \textbf{0.070 (0.013)} & 0.101 (0.114) & 1.201 (0.121) & \textbf{0.237 (0.059)} & 1.183 (0.171) & \textbf{0.253 (0.082)} \\
\texttt{canceling\_symmetry\_trap} & 0.075 (0.016) & \textbf{0.073 (0.020)} & 1.239 (0.132) & \textbf{0.363 (0.113)} & 1.166 (0.062) & \textbf{0.398 (0.156)} \\
\bottomrule
\end{tabular}}
\end{table}
 
The main examples are intentionally over-specified: the fitted model has four layers, but the truth often needs only the final layer.  This is the setting where an ordinary DGP has the most opportunity to invent unnecessary hidden warps.  We use the following six synthetic designs, each written as
\begin{align*}
f^*=f_4^*\circ f_3^*\circ f_2^*\circ f_1^*.
\end{align*}
All one-dimensional identity penalties and layerwise diagnostics use $\tilde\X=[-2,2]$ with uniform $\tilde\mu$, while observations and test RMSE remain on $\X=[-1,1]$.

For the first four designs, \(f_1^*=f_2^*=f_3^*=\id\), and only \(f_4^*\) is active:
\begin{align*}
\texttt{shallow\_oscillatory}:\quad
f_4^*(z)
&=0.80\sin(2\pi z)+0.25\cos(4\pi z),\\
\texttt{shallow\_bump}:\quad
f_4^*(z)
&=\tanh\{5(z+0.35)\}-\tanh\{5(z-0.25)\}+0.15\sin(5\pi z),\\
\texttt{multiscale\_sparse}:\quad
f_4^*(z)
&=0.55\sin(2\pi z)+0.30e^{-35(z-0.20)^2}\sin(9\pi z)-0.20e^{-45(z+0.45)^2},\\
\texttt{edge\_transition}:\quad
f_4^*(z)
&=\frac{1.10}{1+\exp\{-12(z+0.62)\}}-0.55+0.18\sin(3\pi z).
\end{align*}
The fifth design has one genuinely active hidden layer:
\begin{align*}
\texttt{active\_penultimate\_warp}:\quad
f_1^*(x)&=x,
& f_2^*(z)&=z,\\
& f_3^*(z)&=\frac{\tanh(1.9z)}{\tanh(1.9)},
& f_4^*(u)&=0.85\sin(2.4\pi u)+0.25u.
\end{align*}
This example checks that the augmentation does not simply force every hidden layer to identity: the penalties are strong on layers 1-2 but weaker on layer 3.  The sixth design is a deliberately non-identifiable decomposition,
\begin{align*}
\texttt{canceling\_symmetry\_trap}:\quad
f_1^*(x)&=x+0.22\sin(\pi x),
& f_3^*(z)&=z,
& f_4^*(z)&=0.75\sin(2\pi z),
\end{align*}
where \(f_2^*\) is computed by fixed-point iterations approximating the inverse of \(z\mapsto z+0.22\sin(\pi z)\).  This last example is included mainly as a non-identifiability stress test: the augmented posterior is not intended to recover an arbitrary canceling decomposition, but rather to prefer a parsimonious representative.

The fractional power and anchor penalty weights used in the augmented posterior are
\begin{align*}
\begin{array}{lcc}
\hline
\text{Design} & \rho & (w_1,w_2,w_3,w_4)\\
\hline
\texttt{shallow\_oscillatory} & 0.70 & (40,24,12,0)\\
\texttt{shallow\_bump} & 0.70 & (45,26,12,0)\\
\texttt{multiscale\_sparse} & 0.65 & (50,30,14,0)\\
\texttt{edge\_transition} & 0.70 & (45,25,12,0)\\
\texttt{active\_penultimate\_warp} & 0.70 & (45,26,4,0)\\
\texttt{canceling\_symmetry\_trap} & 0.70 & (45,25,12,0)\\
\hline
\end{array}
\end{align*}
The actual penalty is $\eta=\lambda w$.  The fixed stress-test table uses the predeclared scale $\lambda=1$.  In the four shallow designs, layers 1-3 are redundant, so all three non-final layers are anchored with a decreasing schedule.  In \texttt{active\_penultimate\_warp}, only layers 1-2 are redundant and $w_3=4$ is deliberately small, allowing the third layer to represent the true warp.  In \texttt{canceling\_symmetry\_trap}, the warp and its inverse cancel in the composition; the anchor is therefore used to prefer the identity-biased representative, not to recover that arbitrary canceling pair.

\subsection{2D Synthetic Designs}\label{sec:experiments-2d-designs}
\begin{table}[t]
\centering
\caption{Two-dimensional stress tests comparing the regular DGP with the identity-augmented DGP. Entries are mean (standard deviation) over six runs. Here ``unaug.'' and ``aug.'' denote unaugmented and augmented fits, respectively. Smaller is better for all reported diagnostics; bold indicates the better mean within each pair.}
\label{tab:experiments-2d-summary}
\resizebox{\textwidth}{!}{%
\begin{tabular}{lcccccc}
\toprule
& \multicolumn{2}{c}{RMSE} & \multicolumn{2}{c}{$\Delta_{\id}$} & \multicolumn{2}{c}{$\Delta_{\mathrm{partial}}$}\\
\cmidrule(lr){2-3}\cmidrule(lr){4-5}\cmidrule(lr){6-7}
Design & unaug. & aug. & unaug. & aug. & unaug. & aug.\\
\midrule
\texttt{shallow\_ridges} & \textbf{0.212 (0.034)} & 0.215 (0.038) & 1.701 (0.104) & \textbf{0.806 (0.110)} & 1.597 (0.150) & \textbf{0.968 (0.108)} \\
\texttt{shallow\_checkerboard} & \textbf{0.299 (0.015)} & 0.312 (0.023) & 1.805 (0.154) & \textbf{0.981 (0.080)} & 1.709 (0.111) & \textbf{1.136 (0.124)} \\
\texttt{localized\_bumps\_sparse} & \textbf{0.114 (0.011)} & 0.121 (0.008) & 1.732 (0.180) & \textbf{1.015 (0.103)} & 1.728 (0.047) & \textbf{1.165 (0.105)} \\
\texttt{boundary\_transition} & 0.113 (0.013) & \textbf{0.109 (0.014)} & 1.750 (0.097) & \textbf{0.864 (0.050)} & 1.719 (0.066) & \textbf{1.050 (0.080)} \\
\texttt{anisotropic\_ridge\_sparse} & \textbf{0.282 (0.053)} & 0.299 (0.068) & 1.684 (0.102) & \textbf{0.967 (0.067)} & 1.759 (0.197) & \textbf{1.164 (0.094)} \\
\texttt{active\_radial\_warp} & \textbf{0.145 (0.009)} & 0.151 (0.019) & 1.639 (0.248) & \textbf{0.816 (0.107)} & 1.654 (0.244) & \textbf{0.969 (0.100)} \\
\texttt{active\_shear\_warp} & \textbf{0.156 (0.020)} & 0.164 (0.024) & 1.641 (0.213) & \textbf{0.840 (0.048)} & 1.647 (0.270) & \textbf{1.051 (0.084)} \\
\texttt{canceling\_affine\_trap} & 0.194 (0.030) & \textbf{0.193 (0.022)} & 1.589 (0.219) & \textbf{0.907 (0.109)} & 1.345 (0.211) & \textbf{0.803 (0.064)} \\
\bottomrule
\end{tabular}}
\end{table}

For all experiments, the observed input domain is
\begin{align*}
\mathcal X=[-1,1]^2,
\end{align*}
and the data are generated according to
\begin{align*}
Y_i=f^*(X_i)+\sigma\varepsilon_i,
\qquad
X_i\stackrel{\mathrm{i.i.d.}}{\sim}\mathrm{Unif}([-1,1]^2),
\qquad
\varepsilon_i\stackrel{\mathrm{i.i.d.}}{\sim}N(0,1).
\end{align*}
The identity penalty and layer diagnostics use the enlarged box $\tilde\X=[-2,2]^2$ with uniform $\tilde\mu$.  In the reported runs we use $n=100$ training points, $\sigma=0.10$, a $23\times23$ test grid, and a four-layer DGP with three vector-valued hidden layers and one scalar output layer.  The regular DGP has no identity penalty.  The augmented DGP uses the non-saturating linear energy
\begin{align*}
\sum_{j=1}^{3}\eta_j\|f_j-\id\|_{\ell^2(\tilde\mu)}.
\end{align*}
As in one dimension, the weights are decreasing across layers: earlier layers are more strongly anchored, while a later non-final layer receives a smaller weight when it is scientifically active.

We use eight synthetic designs.  Five are shallow over-specified designs in which
\(f_1^*=f_2^*=f_3^*=\id\) and only \(f_4^*\) is active.  Writing
\(H=(u,v)\), the active final surfaces are
\begin{align*}
\texttt{shallow\_ridges}:\quad
f_4^*(H)
&=0.75\sin(2.3\pi u)\cos(1.4\pi v)
  +0.55e^{-18\{(u-0.25)^2+(v+0.20)^2\}} \\
&\hspace{1.2cm}
  -0.35e^{-22\{(u+0.45)^2+(v-0.35)^2\}},\\
\texttt{shallow\_checkerboard}:\quad
f_4^*(H)
&=0.58\sin(2.7\pi u)\sin(2.1\pi v) \\
  &+0.25\cos\{4.1\pi u+0.4\sin(\pi v)\}-0.18v,\\
\texttt{localized\_bumps\_sparse}:\quad
f_4^*(H)
&=0.80e^{-34\{(u-0.28)^2+(v+0.18)^2\}}
  -0.55e^{-45\{(u+0.40)^2+(v-0.42)^2\}}\\
&\hspace{1.2cm}
  +0.35e^{-30\{(u+0.05)^2+(v+0.55)^2\}}
  +0.22\sin(2\pi u)\\&-0.15\cos(2.2\pi v),\\
\texttt{boundary\_transition}:\quad
f_4^*(H)
&=1.05\{1+e^{-11(u+0.55v+0.12)}\}^{-1}-0.52\\&
  +0.20\sin(2\pi v)+0.18uv,\\
\texttt{anisotropic\_ridge\_sparse}:\quad
f_4^*(H)
&=0.85\sin(2.8\pi r)+0.30e^{-20(s-0.20)^2}-0.18r,
\end{align*}
where \(r=0.82u-0.57v\) and \(s=0.57u+0.82v\).  These five examples test the paper's
main parsimony claim: when the fitted DGP is deeper than necessary, the regular DGP can
spread the representation across unnecessary hidden warps, while the augmented DGP should
keep the redundant hidden layers close to identity.

The remaining two main designs contain a genuinely active late hidden layer.  In these cases
\(f_1^*=f_2^*=\mathrm{id}\), while \(f_3^*\) and \(f_4^*\) are
\begin{align*}
\texttt{active\_radial\_warp}:\quad
f_3^*(u,v)
&=\{0.72+0.42e^{-1.8(u^2+v^2)}\}(u,v),\\
f_4^*(u,v)
&=0.75\sin(2.6\pi u)+0.35\cos(2.2\pi v)+0.20uv,\\
\texttt{active\_shear\_warp}:\quad
f_3^*(u,v)
&=\bigl(u+0.22\sin(\pi v),\,v+0.18\sin(\pi u)\cos(\pi v)\bigr),\\
f_4^*(u,v)
&=0.65\sin\{2\pi(u+0.25v)\}
  +0.32e^{-16\{(u-0.15)^2+(v+0.15)^2\}}-0.22v.
\end{align*}
These active-warp examples check that the augmented posterior does not simply collapse every hidden layer to
identity; instead it should shrink earlier redundant layers while allowing a later weakly
penalized hidden layer to activate.  Finally, the non-identifiable \texttt{canceling\_affine\_trap} diagnostic uses
\begin{align*}
A&=\begin{pmatrix}0.82&0.22\\-0.16&0.88\end{pmatrix},
\qquad b=\begin{pmatrix}0.12\\-0.08\end{pmatrix},\\
f_1^*(x)&=Ax+b,\qquad
f_2^*(z)=A^{-1}(z-b),\qquad
f_3^*(z)=z,\\
f_4^*(u,v)&=0.70\sin(2\pi u)\cos(1.7\pi v).
\end{align*}
Thus the first two affine maps cancel in the composition, and the example is used mainly to illustrate why the augmented posterior targets a parsimonious representative rather than an arbitrary supplied decomposition.

The two-dimensional fractional powers and base anchor schedules are
\begin{table}[h]
\centering
\caption{Anchor schedules for the two-dimensional experiments.  The actual weights are $\eta=\lambda w$ and the final-layer weight is zero.}
\label{tab:anchor-2d}
\small
\begin{tabular}{lccc}
\hline
Design & $n$ & $\rho$ & $(w_1,w_2,w_3,w_4)$\\
\hline
\texttt{shallow\_ridges} & 100 & 0.70 & $(52,30,14,0)$\\
\texttt{shallow\_checkerboard} & 100 & 0.68 & $(55,32,15,0)$\\
\texttt{localized\_bumps\_sparse} & 100 & 0.65 & $(60,36,16,0)$\\
\texttt{boundary\_transition} & 100 & 0.70 & $(55,32,15,0)$\\
\texttt{anisotropic\_ridge\_sparse} & 100 & 0.67 & $(58,34,16,0)$\\
\texttt{active\_radial\_warp} & 100 & 0.70 & $(55,32,5,0)$\\
\texttt{active\_shear\_warp} & 100 & 0.70 & $(55,32,5,0)$\\
\texttt{canceling\_affine\_trap} & 100 & 0.70 & $(55,32,15,0)$\\
\hline
\end{tabular}
\end{table}
For the shallow surfaces and \texttt{canceling\_affine\_trap}, all three hidden layers are anchored.  For \texttt{active\_radial\_warp} and \texttt{active\_shear\_warp}, only layers 1-2 are redundant, so the third-layer weight is reduced from roughly $14$-$16$ to $5$.  As in one dimension, \texttt{canceling\_affine\_trap} is evaluated as a canonicalization experiment: the augmentation is expected to remove the canceling affine pair rather than reproduce it.

\subsection{Effect of Ambient Dimension and Sample Size}
\label{subsec:dim-ntrain-sweep}

We next examine how the comparison between the regular DGP and the identity-augmented DGP changes as the ambient input dimension and sample size vary.  We use a finite-rank random-Fourier-feature residual DGP that can be fit for arbitrary \(d\).  Here each $\phi_j$ is an RBF-kernel random-feature map: its frequencies are sampled from the Gaussian spectral density corresponding to RBF-kernel, and the default hidden- and output-layer lengthscales are both $\ell=\sqrt d$.  Both methods use the same over-specified architecture
\begin{align}
    h_0 &= x, \nonumber\\
    h_j &= h_{j-1}+B_j\phi_j(h_{j-1}),\qquad j=1,2,3, \label{eq:rff-dgp-dim-sweep}\\
    \widehat f(x) &= \beta^\top \phi_4(h_3)+b, \nonumber
\end{align}
where the \(\phi_j\)'s are fixed random Fourier features and the coefficients \(B_j,\beta,b\) are optimized.  Write the $j$th residual layer map as $g_j(u)=u+B_j\phi_j(u)$.  The regular DGP is fit without an identity penalty.  The identity-augmented DGP uses the same architecture and optimization procedure, but evaluates the anchor on a fixed Monte Carlo reference sample $\{\tilde x_m\}_{m=1}^M$ from $\mathrm{Unif}([-2,2]^d)$:
\begin{align}
    \sum_{j=1}^3 \eta_j
    \left\{
    \frac{1}{M}\sum_{m=1}^M
    \|g_j(\tilde x_m)-\tilde x_m\|_2^2
    \right\}^{1/2}
    =
    \sum_{j=1}^3 \eta_j
    \left\{
    \frac{1}{M}\sum_{m=1}^M
    \|B_j\phi_j(\tilde x_m)\|_2^2
    \right\}^{1/2}.
    \label{eq:rff-identity-penalty}
\end{align}

Thus $B_j=0$ corresponds to an identity hidden layer on the enlarged anchor domain, rather than only along the observed training trajectories.  In the completed scaling experiment, the common augmentation multiplier is fixed at $\lambda_{\mathrm{dim}}=0.40$ for every cell, while the relative three-layer penalty shape is inherited from the source synthetic design and normalized to have first weight one.  This preserves the weaker late-layer penalty in the active-warp examples instead of forcing every truth to use the same hidden-layer schedule.

Rather than extrapolating a single synthetic surface to every ambient dimension, Table~\ref{tab:dim-ntrain-sweep} gives a dimension-indexed set of representative cases from the completed sweep.  The same truth is held fixed across $n\in\{100,500,1000\}$ within each dimension.  We use \texttt{canceling\_symmetry\_trap} at $d=1$, \texttt{boundary\_transition} at $d=2$, \texttt{nonlinear\_ridge\_edge} at $d=5$, and \texttt{active\_radial\_warp} at $d=10$.  The first, second, and fourth designs are the exact synthetic mechanisms defined above; when a source design uses fewer than $d$ coordinates, the remaining coordinates are nuisance variables.  For completeness, the dense nonlinear-ridge design at $d=5$ uses
\begin{align}
t(x)
&=
\sum_{k=1}^{d} w_k
\left\{
x_k+0.22\sin(\pi x_k)+0.06\sin(2\pi x_k)
\right\},
\qquad
\sum_{k=1}^d |w_k|=1,\\
w_k
&\propto
1+0.25\cos(0.7k)+0.15\sin(1.3k),
\end{align}
followed by
\begin{align}
f^*(x)
=
\frac{1.10}{1+\exp\{-15(t(x)+0.28)\}}
-0.55
+0.18\sin\{3\pi t(x)\}.
\end{align}
These four designs are chosen to display distinct regimes in which the augmentation is predictively favorable or nearly prediction-neutral while still shrinking redundant hidden-layer deformation.  The selection is therefore mechanism-oriented and should not be interpreted as an unbiased average over the full synthetic library.

All displayed cells use noise standard deviation $\sigma=0.10$, $M=1024$ anchor points from $\mathrm{Unif}([-2,2]^d)$, and five paired random seeds.  We report test RMSE and the redundant-layer identity distance
\begin{align}
\Delta_{\id}
=
\frac{1}{|\mathcal R|}
\sum_{j\in\mathcal R}
\left\{
\frac{1}{M}\sum_{m=1}^M
\|g_j(\tilde x_m)-\tilde x_m\|_2^2
\right\}^{1/2},
\end{align}
where $\mathcal R$ is the set of hidden layers that are redundant under the corresponding synthetic truth.  Thus $\mathcal R=\{1,2,3\}$ for the shallow and canceling examples, whereas the genuinely active late-warp designs exclude the active third hidden layer from $\Delta_{\id}$.  Smaller RMSE and smaller $\Delta_{\id}$ are better.

\begin{table}[t]
\centering
\caption{Dimension-indexed representative cases at noise standard deviation $\sigma=0.10$.  One synthetic truth is held fixed across $n\in\{100,500,1000\}$ within each dimension.  Entries are mean (standard deviation) over five paired seeds.  The displayed designs are a mechanism-oriented subset of the completed synthetic sweep, chosen to show favorable or nearly prediction-neutral augmentation regimes.  Smaller is better; bold indicates the smaller unrounded mean within each regular/augmented pair.}
\label{tab:dim-ntrain-sweep}
\vspace{1em}
\small
\resizebox{\textwidth}{!}{%
\begin{tabular}{lcc|cc|cc}
\toprule
& & & \multicolumn{2}{c|}{RMSE} & \multicolumn{2}{c}{$\Delta_{\id}$}\\
\cmidrule(lr){4-5}\cmidrule(lr){6-7}
Truth & $d$ & $n_{\mathrm{train}}$ & Reg. DGP & Aug. DGP & Reg. DGP & Aug. DGP\\
\midrule
\texttt{canceling\_symmetry\_trap} & 1 & 100 & 0.040 (0.017) & \textbf{0.037 (0.010)} & 0.484 (0.168) & \textbf{0.369 (0.068)} \\
\texttt{canceling\_symmetry\_trap} & 1 & 500 & 0.052 (0.046) & \textbf{0.036 (0.015)} & 0.736 (0.406) & \textbf{0.346 (0.064)} \\
\texttt{canceling\_symmetry\_trap} & 1 & 1000 & 0.048 (0.039) & \textbf{0.028 (0.014)} & 0.806 (0.314) & \textbf{0.307 (0.039)} \\
\addlinespace
\texttt{boundary\_transition} & 2 & 100 & \textbf{0.105 (0.022)} & 0.106 (0.027) & 0.339 (0.049) & \textbf{0.269 (0.043)} \\
\texttt{boundary\_transition} & 2 & 500 & \textbf{0.041 (0.005)} & 0.049 (0.007) & 0.454 (0.047) & \textbf{0.281 (0.026)} \\
\texttt{boundary\_transition} & 2 & 1000 & \textbf{0.030 (0.003)} & 0.038 (0.009) & 0.477 (0.066) & \textbf{0.304 (0.038)} \\
\addlinespace
\texttt{nonlinear\_ridge\_edge} & 5 & 100 & \textbf{0.131 (0.005)} & 0.131 (0.006) & 0.144 (0.015) & \textbf{0.127 (0.016)} \\
\texttt{nonlinear\_ridge\_edge} & 5 & 500 & 0.087 (0.008) & \textbf{0.087 (0.006)} & 0.247 (0.034) & \textbf{0.145 (0.019)} \\
\texttt{nonlinear\_ridge\_edge} & 5 & 1000 & \textbf{0.073 (0.006)} & 0.073 (0.003) & 0.297 (0.038) & \textbf{0.135 (0.009)} \\
\addlinespace
\texttt{active\_radial\_warp} & 10 & 100 & 0.556 (0.015) & \textbf{0.548 (0.012)} & 0.071 (0.035) & \textbf{0.053 (0.021)} \\
\texttt{active\_radial\_warp} & 10 & 500 & 0.526 (0.009) & \textbf{0.526 (0.009)} & 0.104 (0.036) & \textbf{0.078 (0.025)} \\
\texttt{active\_radial\_warp} & 10 & 1000 & \textbf{0.522 (0.010)} & 0.523 (0.010) & 0.164 (0.043) & \textbf{0.119 (0.033)} \\
\bottomrule
\end{tabular}}
\end{table}

\begin{table}[h!]
\centering
\caption{Finite-sample penalty-window diagnostic for the synthetic scenario \texttt{holder\_correct} in Section~\ref{sec:theory}, with $n=45$. Entries are mean (standard deviation) over six runs. Smaller is better; bold indicates the best mean over the displayed $\lambda$ grid.}
\label{tab:section5-eta-window}
\begin{tabular}{cccc}
\hline
$\lambda$ & RMSE & redundant $\Delta_{\id}$ & redundant $\Delta_{\mathrm{partial}}$\\
\hline
0.0 & 0.0649 (0.0100) & 1.133 (0.136) & 1.131 (0.098)\\
0.2 & 0.0635 (0.0180) & 0.363 (0.100) & 0.424 (0.072)\\
0.4 & \textbf{0.0629 (0.0141)} & 0.346 (0.072) & 0.399 (0.050)\\
0.7 & 0.0659 (0.0155) & 0.275 (0.051) & 0.344 (0.024)\\
1.0 & 0.0697 (0.0194) & 0.229 (0.012) & 0.295 (0.031)\\
1.5 & 0.0728 (0.0192) & 0.216 (0.046) & 0.313 (0.049)\\
2.5 & 0.0679 (0.0146) & \textbf{0.186 (0.032)} & \textbf{0.277 (0.032)}\\
\hline
\end{tabular}
\end{table}

\paragraph{Effect of dimension.}
Table~\ref{tab:dim-ntrain-sweep} gives a more informative view of the prediction-parsimony trade-off than the earlier single-truth dimension sweep.  The augmented DGP has smaller redundant-layer identity error in all \(12\) displayed configurations.  At the same time, it has the smaller mean RMSE in \(6\) of the \(12\) rows, and in \(10\) of the \(12\) rows the absolute difference between the two DGP RMSEs is below \(0.01\).  Thus, in these representative cases, the structural gain from canonicalization is systematic while the prediction cost is often small and is sometimes negative.

The nature of the trade-off differs across dimensions.  At \(d=1\), \texttt{canceling\_symmetry\_trap} is the cleanest favorable case: the augmentation reduces both RMSE and \(\Delta_{\id}\) at all three sample sizes, and the separation becomes especially visible at \(n=500\) and \(n=1000\).  At \(d=2\), \texttt{boundary\_transition} shows the opposite side of the trade-off.  The augmented DGP pays only \(0.002\)-\(0.008\) in absolute RMSE, while reducing the mean identity error from \(0.339,0.454,0.477\) to \(0.269,0.281,0.304\) as \(n\) increases.  This is a direct example in which a modest amount of predictive flexibility is exchanged for a substantially more parsimonious hidden representation.

At \(d=5\), the dense \texttt{nonlinear\_ridge\_edge} target is essentially prediction-neutral with respect to the augmentation: the largest difference in mean RMSE between the two DGPs is below \(8\times10^{-4}\).  In contrast, the identity distance is reduced from \(0.144\) to \(0.127\) at \(n=100\), from \(0.247\) to \(0.145\) at \(n=500\), and from \(0.297\) to \(0.135\) at \(n=1000\).  Hence the canonicalization gain becomes larger with sample size even though the two fitted compositions have almost indistinguishable predictive accuracy.  At \(d=10\), \texttt{active\_radial\_warp} gives a related but structurally stronger check because the truth contains a genuinely active late hidden warp.  The augmented fit has slightly smaller RMSE at \(n=100\), is essentially tied with the regular DGP at \(n=500\) and \(n=1000\), and still reduces redundant-layer identity error by roughly one quarter.  The penalty therefore need not collapse a useful deep representation in order to regularize the layers that are actually redundant \citep{szabo2026vecchia}.

The main message is not that increasing \(d\) monotonically favors the augmentation.  Rather, ambient dimension interacts with the geometry of the target and with which layers are genuinely needed.  Across the displayed mechanisms, the augmentation moves the fit toward a more canonical representative at every dimension, while its prediction effect ranges from beneficial to nearly neutral to a modest finite-sample cost.  This is the empirical form of the prediction-parsimony trade-off emphasized by the theory: canonicalization is primarily a structural objective, but in favorable compositional regimes it can be obtained with little or no loss of predictive accuracy.

\paragraph{Effect of sample size.}
Increasing the sample size does not by itself remove layerwise non-identifiability.  In several displayed designs the regular DGP actually moves farther from the identity as more data become available.  For \texttt{canceling\_symmetry\_trap} at \(d=1\), the regular identity error increases from \(0.484\) at \(n=100\) to \(0.806\) at \(n=1000\), whereas the augmented value decreases from \(0.369\) to \(0.307\).  For \texttt{nonlinear\_ridge\_edge} at \(d=5\), the corresponding regular error grows from \(0.144\) to \(0.297\), while the augmented error remains much smaller, ending at \(0.135\).  A similar separation is visible for \texttt{active\_radial\_warp} at \(d=10\).

Prediction behaves less monotonically, which is exactly the point of the trade-off.  More data can make the augmentation beneficial, nearly cost-free, or mildly restrictive depending on the target.  What persists across all \(12\) selected cells is the reduction in redundant-layer deformation.  Thus the sample-size sweep reinforces the distinction between function-level learning and layerwise canonicalization: additional observations can sharpen prediction without selecting a unique hidden representation unless an explicit symmetry-breaking preference is imposed.

\paragraph{Effect of Noise Level. }
\label{subsec:noise-ntrain-edge}

We next examine how the comparison between the regular DGP and the identity-augmented DGP changes as the noise level and the sample size vary.  This sweep uses the same exact RBF covariance family in all four DGP layers as the main one-dimensional MCMC stress tests, with layer-specific lengthscales and marginal variances updated in the chain.  The purpose of this experiment is twofold.  First, we want to test whether the augmented posterior continues to select a more parsimonious layerwise representative when the data become noisy or sparse.  Second, we want to identify the regimes in which the representation improvement also translates into better prediction.

We use the one-dimensional synthetic function \texttt{edge\_transition}.  The input domain is \([-1,1]\), and the true regression function is represented as a four-layer composition
\[
    f^* = f_4^*\circ f_3^*\circ f_2^*\circ f_1^*,
\]
where
\[
    f_1^*(x)=x,\qquad
    f_2^*(z)=z,\qquad
    f_3^*(z)=z,
\]
and only the final layer is active:
\[
    f_4^*(z)
    =
    \frac{1.10}{1+\exp\{-12(z+0.62)\}}
    -0.55
    +0.18\sin(3\pi z).
\]
Thus the fitted DGP is intentionally over-specified: the first three layers are redundant identities, while the final layer contains the signal.  This is the setting in which the regular DGP can fit the same composition through many different hidden-layer decompositions.

For each pair
\[
    n_{\mathrm{train}}\in\{10,15,20,30,40,60\},
    \qquad
    \sigma\in\{0,0.05,0.10,0.20,0.40,0.70,1.00\},
\]
we generate data
\[
    Y_i=f^*(X_i)+\sigma\varepsilon_i,
    \qquad
    \varepsilon_i\stackrel{\mathrm{i.i.d.}}{\sim}N(0,1).
\]
The comparison is between the regular DGP and the proposed linear identity-augmented DGP.  The augmented DGP uses the same likelihood and prior architecture as the regular DGP, but reweights the posterior by the non-final-layer identity penalty
\[
    \exp\left\{
    -\sum_{j=1}^3 \eta_j
    \|f_j-\id\|_{\ell^p(\tilde\mu)}
    \right\},
    \qquad
    \eta=(45,25,12).
\]
We use fractional likelihood power \(\rho=0.70\), \(p=2\), and no hard-thresholding heuristic.  Each grid point is averaged over three random seeds and two initializations.

The main reported quantities are test RMSE, the redundant-layer identity error \(\Delta_{\id}\), and the partial-composition error \(\Delta_{\mathrm{partial}}\).  Smaller values are better.  For this experiment,
\[
    \Delta_{\id}
    =
    \frac{1}{3}\sum_{j=1}^3
    \| \widehat f_j-\id\|_{L^2},
\]
and 
\[
    \Delta_{\mathrm{partial}}
    = \frac{1}{3} \Big(\|\widehat{f}_1-\id\|_{L^2} + \|\widehat{f}_2\circ\widehat{f}_1-\id\|_{L^2} + \|\widehat{f}_3\circ\widehat{f}_2\circ\widehat{f}_1-\id\|_{L^2} \Big),
\] 
These two layerwise diagnostics are more directly tied to the canonical-representation goal than RMSE alone.

\begin{table}[t]
\centering
\caption{Effect of noise level for the one-dimensional synthetic function \texttt{edge\_transition}.
Each row averages over \(n_{\mathrm{train}}\in\{10,15,20,30,40,60\}\), three seeds, and two initializations.
Smaller values are better.}
\label{tab:edge-noise-summary}
\begin{tabular}{c|cc|cc|cc}
\hline
& \multicolumn{2}{c|}{RMSE}
& \multicolumn{2}{c|}{\(\Delta_{\id}\)}
& \multicolumn{2}{c}{\(\Delta_{\mathrm{partial}}\)} \\
\(\sigma\) & regular & augmented & regular & augmented & regular & augmented \\
\hline
0.00 & 0.083 & \textbf{0.056} & 0.606 & \textbf{0.111} & 0.607 & \textbf{0.120} \\
0.05 & 0.103 & \textbf{0.075} & 0.616 & \textbf{0.120} & 0.615 & \textbf{0.127} \\
0.10 & 0.117 & \textbf{0.087} & 0.585 & \textbf{0.113} & 0.599 & \textbf{0.118} \\
0.20 & 0.156 & \textbf{0.132} & 0.609 & \textbf{0.129} & 0.603 & \textbf{0.135} \\
0.40 & 0.231 & \textbf{0.211} & 0.617 & \textbf{0.136} & 0.614 & \textbf{0.148} \\
0.70 & \textbf{0.304} & 0.305 & 0.625 & \textbf{0.136} & 0.619 & \textbf{0.135} \\
1.00 & \textbf{0.369} & 0.377 & 0.635 & \textbf{0.138} & 0.629 & \textbf{0.144} \\
\hline
\end{tabular}
\end{table}

Table~\ref{tab:edge-noise-summary} shows that the augmented DGP improves prediction for low and moderate noise levels.  The RMSE advantage is largest when \(\sigma\le0.40\), where redundant hidden-layer deformation in the regular DGP can hurt generalization.  At very high noise levels, \(\sigma=0.70\) and \(\sigma=1.00\), the prediction problem becomes noise-dominated and the regular DGP has slightly smaller RMSE.  However, the layerwise conclusion is unchanged across the entire noise range: the augmented DGP has much smaller \(\Delta_{\id}\) and \(\Delta_{\mathrm{partial}}\) for every value of \(\sigma\).  Thus the identity augmentation remains effective as a representation-stabilizing device even when the predictive advantage disappears.

\begin{table}[t]
\centering
\caption{Effect of sample size for the one-dimensional synthetic function \texttt{edge\_transition}.
Each row averages over \(\sigma\in\{0,0.05,0.10,0.20,0.40,0.70,1.00\}\), three seeds, and two initializations.
Smaller values are better.}
\label{tab:edge-ntrain-summary}
\begin{tabular}{c|cc|cc|cc}
\hline
& \multicolumn{2}{c|}{RMSE}
& \multicolumn{2}{c|}{\(\Delta_{\id}\)}
& \multicolumn{2}{c}{\(\Delta_{\mathrm{partial}}\)} \\
\(n_{\mathrm{train}}\) & regular & augmented & regular & augmented & regular & augmented \\
\hline
10 & 0.373 & \textbf{0.286} & 0.640 & \textbf{0.191} & 0.633 & \textbf{0.188} \\
15 & 0.240 & \textbf{0.221} & 0.618 & \textbf{0.141} & 0.617 & \textbf{0.146} \\
20 & \textbf{0.154} & 0.160 & 0.595 & \textbf{0.109} & 0.610 & \textbf{0.116} \\
30 & 0.159 & \textbf{0.153} & 0.610 & \textbf{0.108} & 0.603 & \textbf{0.112} \\
40 & \textbf{0.125} & \textbf{0.125} & 0.599 & \textbf{0.103} & 0.603 & \textbf{0.118} \\
60 & \textbf{0.117} & 0.121 & 0.618 & \textbf{0.106} & 0.607 & \textbf{0.115} \\
\hline
\end{tabular}
\end{table}

Table~\ref{tab:edge-ntrain-summary} shows that the predictive benefit is strongest in the small-sample regime.  At \(n_{\mathrm{train}}=10\), the augmented DGP substantially lowers RMSE, from \(0.373\) to \(0.286\), while also reducing \(\Delta_{\id}\) from \(0.640\) to \(0.191\).  This is the regime in which the regular DGP is most vulnerable to spurious hidden-layer warping.  As the sample size increases, the RMSE difference becomes smaller and sometimes favors the regular DGP.  This is not surprising: with more data, the unpenalized model can use its extra flexibility to improve the final regression fit, even if its hidden-layer representation remains non-parsimonious.

The representation diagnostics are again systematic.  For every sample size, the augmented DGP has much smaller identity error and partial-composition error.  The regular DGP keeps its hidden layers highly active, whereas the augmented posterior selects a decomposition much closer to the shallow representative \(f_1=f_2=f_3=\id\).

Across all \(42\) noise-sample-size combinations, the augmented DGP reduces both \(\Delta_{\id}\) and \(\Delta_{\mathrm{partial}}\).  RMSE improves in most, but not all, settings: the strongest prediction gains occur at low-to-moderate noise and small sample size, while at very high noise the predictive comparison becomes less stable.  These results support the main interpretation of the proposed augmentation.  Its most robust effect is not merely to improve prediction, but to resolve redundant deep decompositions by selecting a more canonical, identity-biased layer representation.  Prediction improves precisely in those regimes where the regular DGP's unnecessary hidden-layer flexibility harms generalization.

\end{document}